\definecolor{MyDarkblue}{rgb}{0,0.08,0.50}
\definecolor{Brickred}{rgb}{0.65,0.08,0}
\numberwithin{equation}{section}
\theoremstyle{plain}
\newtheorem{theorem}{Theorem}[section]
\newtheorem{lemma}[theorem]{Lemma}
\newtheorem{proposition}[theorem]{Proposition}
\newtheorem{conjecture}[theorem]{Conjecture}
\theoremstyle{remark}
\newtheorem{example}{Example}[section]
\newtheorem{definition}[theorem]{Definition}
\newtheorem{assumption}[theorem]{Assumption}
\newtheorem{remark}[theorem]{Remark}
\newtheorem{problem}[theorem]{Problem}
\newcommand{\pr}{\mathbb{P}}
\newcommand{\Prob}[1]{\pr\left(#1\right)}
\newcommand{\1}{\mathbbm{1}}
\newcommand{\ind}[1]{\1_{\{#1\}}}
\newcommand{\indicwo}[1]{\1_{#1}}
\newcommand{\cA}{\mathcal A}
\newcommand{\cC}{\mathcal C}
\newcommand{\cD}{\mathcal D}
\newcommand{\cE}{\mathcal E}
\newcommand{\cG}{\mathcal G}
\newcommand{\cN}{\mathcal N}
\newcommand{\cP}{\mathcal P}
\newcommand{\cR}{\mathcal R}
\newcommand{\cS}{\mathcal S}
\newcommand{\cU}{\mathcal U}
\newcommand{\cX}{\mathcal X}
\newcommand{\T}{\mathbb T}
\newcommand{\Z}{\mathbb Z}
\newcommand{\N}{\mathbb N}
\newcommand{\R}{\mathbb R}
\newcommand{\G}{\mathbb G}
\newcommand*{\be}{\begin{equation}}
	\newcommand*{\ee}{\end{equation}}
\newcommand*{\ba}{\begin{aligned}}
	\newcommand*{\ea}{\end{aligned}}
\renewcommand{\P}[1]{\mathbb P\left(#1\right)}
\newcommand{\wt}{\widetilde}
\newcommand{\toindis}{\overset d\longrightarrow}
\newcommand{\toinp}{\overset{\mathbb P}{\longrightarrow}}
\newcommand{\cO}{\mathcal O}
\newcommand{\eps}{\varepsilon}
\renewcommand{\b}{\backslash}
\newcommand{\e}{\mathrm e}
\newcommand{\E}[1]{\mathbb E\left[#1\right]}
\newcommand{\cF}{\mathcal F}
\newcommand{\BP}{\mathrm{BP}}
\newcommand{\dd}{\mathrm d}
\newcommand{\floor}[1]{\lfloor #1\rfloor}
\newcommand{\toas}{\overset{\mathrm{a.s.}}{\longrightarrow}}
\newcommand{\invisible}[1]{}
\title[Long-range competition on the torus]{Long-range competition on the torus}
\author{Bas Lodewijks\orcidlink{0000-0001-5624-2410}}
\address{School of Mathematical and Physical Sciences, University of Sheffield}
\email{bas.lodewijks@sheffield.ac.uk}
\author{Neeladri Maitra}
\address{Department of Mathematics, University of Illinois Urbana-Champaign}
\email{nmaitra@illinois.edu}
\date{}
\begin{document}

	\begin{abstract}
		We study competition between two growth models with long-range correlations on the torus $\T_n^d$ of size $n$ in dimension $d$. We append the edge set of the torus $\T_n^d$ by including all non-nearest-neighbour edges, and from two source vertices $v^\ominus$ and $v^\oplus$ in $\T_n^d$ two infection processes $\ominus$ and $\oplus$ start spreading to other vertices. Each susceptible vertex can be infected by at most one infection type and when infected stays infected forever (i.e.\ competing SI models). A  vertex $v$ infected with type $\square\in\{\ominus,\oplus\}$ infects a susceptible vertex $u$ at rate $\lambda_\square \|u-v\|^{-\alpha_\square}$, where $\lambda_{\ominus}=\lambda_\ominus(n),\lambda_\oplus=\lambda_\oplus(n)>0$ and $\alpha_\ominus=\alpha_\ominus(n),\alpha_\oplus=\alpha_\oplus(n)\in[0,d)$ are allowed to depend on $n$. We study \emph{coexistence}, the event that both infections reach an asymptotically positive proportion of the graph as $n$ tends to infinity, and identify precisely when coexistence occurs. In the case of absence of coexistence, we outline  several phase transitions in the size of the infection that reaches a negligible proportion of the vertices, which depends on the ratio of the sum of infection rates across all vertices of type $\ominus$ and $\oplus$. The work extends known results for the case $\alpha_\ominus(n)=\alpha_\oplus(n)\equiv 0$ and $\lambda_\ominus(n)\equiv 1, \lambda_\oplus(n)\equiv \lambda>0$, and includes general and novel results that cannot be observed when the model parameters are fixed and independent of $n$. The main technical contribution is a coupling of the competition process with branching random walks, where we are able to use the coupling even when  coupling error between the competition process and the branching random walks is of the same order of magnitude as the size of the coupled processes. 
	\end{abstract}

	\maketitle

	\section{Introduction}
Suppose that a virus spreads through a population, when suddenly a mutated variant appears. This mutated variant is less contagious, but has a longer incubation period (the time until symptoms develop). As such, though the variant spreads slower than the original virus locally, its increased incubation period may result in infections to occur over greater distances. People who carry the virus but show no symptoms may still go into work, visit friends or family, or travel large distances. As such, the variant can more easily spread along greater distances compared to the original virus.

An important question is which of the two viruses is the more dangerous one. Indeed, in the outbreak of, for example, the COVID virus, it was observed that mutated variants showed behaviour different to that of the original strain~\cite{Hadj22,ZhaHuaZhaCheGaoJia22}, thus requiring a shift in focus on how to combat the outbreak. Depending on the difference in properties and behaviour of the two viruses (e.g.\ their contagiousness and incubation periods), which of the two viruses infects only a negligible part of the population, compared to the other, by the time the viruses are endemic? Or, do both viruses infect a positive proportion of the population and thus both form an equal threat? 

The aim of this paper is to study a mathematical model of competing viral infections in a population spread out through space, where the viral infections can differ not only in the \emph{local intensity} at which they spread, but also in the strength of their respective \emph{long-range} spreading rate. As far as the authors are aware, this initiates the study of such competition models where long-range effects are considered. We model the spread of infections as two long-range susceptible-infected (SI) models on the torus $\T_n^d$ of volume $n$ in dimension $d$. Equivalently, one can view this as two long-range first-passage percolation models. Let us first review some related first-passage percolation models before we describe the model studied here in more detail. 

First-passage percolation (FPP) was first introduced by Hammersley and Welsh in 1965 \cite{Hammersley1965}. It has been studied extensively as a prototypical model of fluid-flow through porous media; we refer the interested reader to the survey \cite{auffinger201750} and references therein. Informally, one considers a graph $G=(V,E)$, and associates i.i.d.\ copies $\gamma_e$ of some random variable $\gamma$ to every edge $e \in E$. We interpret $\gamma_e$ as the time it takes for a fluid to traverse the edge $e$. For the special case when $\gamma$ is an exponentially distributed random variable with mean $1/\lambda$, we say that the FPP process `flows at rate $\lambda$'. 

A model where two FPP processes compete to occupy sites in the $\mathbb{Z}^d$ was first introduced as the two-type Richardson model in \cite{Hag_Pem_98}. In this model, two FPP processes start flowing from two different sources at potentially different rates, and they compete to cover the entire lattice. The main interest is in understanding when both processes cover infinitely many sites with positive probability; an event known as \emph{coexistence}. In this direction, the main conjecture from \cite{Hag_Pem_98} is that coexistence occurs if and only if the rates of the two competing processes are equal. Note that by a straightforward scaling, one can without loss of generality consider the case where one first-passage percolation process flows at rate $1$, while the other flows at rate $\lambda\geq 1$. Many results related to this conjecture have been obtained. The original paper \cite{Hag_Pem_98} that introduced this model showed coexistence occurs for $\lambda=1$, \cite{Hagg_Pem_00} showed non-coexistence for almost all parameter values of $\lambda$, and \cite{ahlberg2020two} showed coexistence if and only if $\lambda=1$ on the half-plane $\mathbb Z\times \mathbb N$. We refer the interested reader to the survey \cite{mia_survey} and the references therein for more in-depth information.

A model of \emph{long-range first-passage percolation} (LRFPP) that inspired the  present paper was introduced by Chatterjee and Dey in~\cite{SC_PD_LRFPP}. Here, the underlying graph $\G_\infty=(V(\G_\infty),E(\G_\infty))$ has the $d$-dimensional integer lattice $\Z^d$ as its vertex set, and each vertex is connected to every other vertex; thus creating an `infinite complete graph embedded in $\Z^d$'. Every edge $e$ is assigned a transmission time  $T_e:=\|e\|^\alpha E_e$, where $\|e\|$ denotes the spatial length of the edge (one can take any $p$-norm to determine the length), $\alpha\geq 0$ is the \emph{long-range} parameter, and $(E_e)_{e\in E(\G_\infty)}$ is a family of i.i.d.\ rate-one exponential random variables. One can view this setting as a first-passage percolation model where transmission times are penalised by the lengths of the edge, where long edges receive heavier penalties and $\alpha$ controls the discrepancy between penalties of short and long edges. Note that $\alpha=\infty$ can be interpreted as the case of nearest-neighbour FPP on the integer lattice $\mathbb{Z}^d$.

In this paper, we study a competition variant of the LRFPP model on the torus $\T_n^d$ of volume $n$ in dimension $d$ obtained by including the non-nearest-neighbour edges (instead of the entire lattice $\Z^d$ as in the previous paragraph). Here, initially from two different source vertices $v^\ominus$ and $v^\oplus$ in $\T_n^d$, two different LRFPP processes start, say a type $\ominus$ and type $\oplus$ process, that infect vertices in $\T_n^d$. A vertex infected with type $\square\in\{\ominus,\oplus\}$ infects a susceptible vertex $u$ at rate $\lambda_\square \|u-v\|^{-\alpha_\square}$, where $\lambda_\ominus,\lambda_\oplus>0$ are \emph{global} intensities and $\alpha_\ominus,\alpha_\oplus\geq 0$ are the \emph{long-range} parameters. Referring back to  the real-world setting that served as motivation at the start, we can view  the different global intensities as the 
contagiousness of the viruses, and the long-range parameters as the strength of the long-range spread of the viruses (possibly determined by e.g.\ different incubation periods). Our main interest is in understanding the size of the set of vertices infected by a either infection once all the vertices of $\T_n^d$ are infected.

We restrict our attention to long-range parameters strictly less than the dimension $d$, which corresponds to the \emph{instantaneous percolation regime} for LRFPP studied by Chatterjee and Dey~\cite{SC_PD_LRFPP} and can be thought of as a setting with weak spatial dependence. We remark that, by~\cite[Theorem $1.2$]{SC_PD_LRFPP}, a LRFPP process on $\G_\infty$ with long-range parameter $\alpha<d$ has the following property: for any time $t>0$, the set of vertices that can be reached from the origin equals $\Z^d$ almost surely. Hence, any vertex is reached instantaneously. This clarifies the choice to restrict the competition process to the finite torus $\T_n^d$. Indeed, such a process on $\G_\infty$ is not well-defined in the instantaneous percolation regime, since one cannot distinguish which infection reaches a vertex first. For long-range parameters exceeding $d$, the competition process can be studied on $\G_\infty$ and is on-going future work. 

In this paper we are generally concerned with the occurrence (absence of) coexistence at a linear scale: after all vertices are infected, have both infection processes infected an asymptotically positive proportion of the torus, or does one process infect almost all vertices, leaving only $o(n)$ many for the other process? In the case of non-coexistence, i.e.\ one infection process infects almost all vertices of the torus, what can further be said about the size of the process that reached only a negligible proportion of vertices? We provide a complete picture in terms of the intensities $\lambda_\ominus,\lambda_\oplus$, and the long-range parameters $\alpha_\ominus,\alpha_\oplus$. In particular, we allow these parameters to depend on $n$, which uncovers more subtle behaviour in certain cases that cannot be observed from fixed parameters, independent of $n$, which is common in most of the literature.

\subsection{Model definition} \label{sec:model_def}
Fix $d\in\N$ and let $\T_n^d$ be the $d$-dimensional torus of size $n$, i.e.\ $\T_n^d=$ $[-n^{1/d}/2,n^{1/d}/2] \cap \Z^d$, where we identify vertices on opposing boundaries to be the same. Let $\lambda_-,\lambda_+=\lambda_-(n),\lambda_+(n)>0$ and $\alpha_-,\alpha_+=\alpha_-(n),\alpha_+(n)\in[0,d)$. Throughout the paper, when there is no confusion, we omit the argument $n$ and write $\lambda_\square$ (resp.\ $\alpha_\square$) for $\lambda_\square(n)$ (resp.\ $\alpha_\square(n)$), for $\square\in\{\ominus,\oplus\}$. Let $\|\cdot\|$ denote the torus norm, which we can allow to be any $\ell^p$ distance on $\T_n^d$, with appropriate periodicity conditions that come from identifying the boundaries of $\T_n^d$.

We define the Long-Range Competition (LRC) process as follows. Fix two distinct vertices $v^\ominus$ and $v^\oplus$ in $\T_n^d$. We let $(\cN_n^\ominus(t),\cN_n^\oplus(t))_{t\geq 0}$ be a Markov process with state space the set of all pairs of disjoint subsets of $\T_n^d$,
\begin{align*}
	\Omega=\{(A,B):A,B \subset \T_n^d, A\cap B = \emptyset\}.   
\end{align*}
At $t=0$, we initialise the process by setting $\cN_n^\ominus(0)=\{v^\ominus\}$ and $\cN_n^\oplus(0)=\{v^\oplus\}$. At any time $t\geq 0$ a vertex  $v \in \cN_n^{\square}(t)$ spreads to a vertex $w \in (\cN_n^\ominus(t)\cup\cN_n^\oplus(t))^c$ at rate $\lambda_\square \| v-w\|^{-\alpha_\square}$ for $\square \in \{\ominus,\oplus\}$. When $v$ spreads to $w$ at time $t\geq 0$, we add $w$ to $\cN_n^{\square}(t)$ and $w$ starts to spread  in an equivalent manner.

The LRC process, which can be viewed as a competing first-passage percolation model (also referred to as a susceptible-infected (SI) model) describes the evolution of two competing infections on $\T_n^d$, started from $v^\ominus$ and $v^\oplus$, where the parameters $\lambda_\square$ and $\alpha_\square$, as well as the geometry of $\T_n^d$, influence the rate at which an infected vertex spreads to a susceptible vertex. 

It is clear that the process reaches an absorbing state once $\cN_n^\ominus(t)\cup\cN_n^\oplus(t)=\T_n^d$ for some $t\geq0$, i.e.\ once the torus has been covered. As such, we define the \emph{cover time} as 
\be 
T_{\mathrm{cov}}:=\inf\{t\geq 0\colon \cN_n^\ominus(t)\cup\cN_n^\oplus(t)=\T_n^d\}.
\ee 
Further, we note that multiplying all spreading rates with a positive constant changes the process only by an inverse time shift. That is, for any $\kappa=\kappa(n)>0$, let $(\cN_n^{\ominus,\kappa}(t),\cN_n^{\oplus,\kappa}(t))_{t\geq 0}$ denote the LRC process where all spreading rates are multiplied by $\kappa$. We then have 
\be 
(\cN_n^{\ominus,\kappa}(t),\cN_n^{\oplus,\kappa}(t))\overset{\mathrm{d}}{=} (\cN_n^\ominus(t/\kappa),\cN_n^\oplus(t/\kappa))\qquad \text{for all }t\geq 0.
\ee 
We define 
\be\label{eq:Rndef}
R_n^{\square}:=\sum_{\substack{v\in\T_n^d\\ v\neq 0}}\lambda_\square\|v\|^{-\alpha_\square}\qquad \text{for }\square \in \{\ominus,\oplus\}.
\ee 
We refer to $R_n^\square$ as the \emph{total rate} of the Type $\square$ infection (with $\square\in\{\ominus,\oplus\}$). In the remainder of the paper, we shall work only with the LRC process $(\cN_n^{\ominus,\kappa}(t),\cN_n^{\oplus,\kappa}(t))_{t\geq 0}$ with $\kappa=1/R_n^\ominus$ and we omit the superscript $\kappa$ for ease of writing. 

\textbf{Notation.} \  For sequences $(a_n)_{n\in\N}$ and $(b_n)_{n\in\N}$ such that $b_n>0$ for all $n$, we let $a_n=o(b_n)$ and $a_n=\cO(b_n)$ denote $\lim_{n \to \infty}\frac{|a_n|}{b_n}=0$ and $\limsup_{n \to \infty}\big|\frac{a_n}{b_n}\big|<\infty$, respectively. For a sequence $(X_n)_{n\in\N}$ of random variables and a random variable $X$, we write $X_n\toindis X, X_n\toinp X, X_n\toas X$ to denote convergence of $X_n$ to $X$ in distribution, in probability, and in the almost sure sense, respectively. For random variables $X$ and $Y$, we let $X \stackrel{d}{=} Y$ denote equality in distribution. We say that a sequence of events $\cE_n$ holds \emph{with high probability}, if $\P{\cE_n} \to 1$ as $n \to \infty$.

\subsection{Statements of main results}\label{sec:statements}

The main quantities of interest are the final sizes $|\cN_n^\ominus(T_{\mathrm{cov}})|$ and $|\cN_n^\oplus(T_{\mathrm{cov}})|$ of the two infections when the entire torus is infected. In particular, we are interested in whether both infections reach a linear portion of the vertices in $\T_n^d$, or if just one of the infections reaches all but a sub-linear portion of the vertices. To this end, we set 
\be \label{eq:Mdef}
M_n:=\min\big\{|\cN_n^\ominus(T_{\mathrm{cov}})|,|\cN_n^\oplus(T_{\mathrm{cov}})| \big\},
\ee 
and define \emph{coexistence} and \emph{absence of coexistence} as follows: 

\begin{definition}[(Absence of) coexistence]
	We say that \emph{coexistence} occurs when 
	\be \label{eq:def_coex}
	\lim_{\eps \downarrow 0}\liminf_{n \to \infty}\mathbb P\bigg(\frac{M_n}{n}>\eps\bigg)=1.
	\ee
	We say that \emph{absence of coexistence} occurs when 
	\begin{equation}\label{eq:def_abs_coex}
		\frac{M_n}{n}\toinp 0.
	\end{equation}
\end{definition}

\noindent Our main theorems describe precisely when (absence of) coexistence occurs. We recall $R_n^\ominus$ and $R_n^\oplus$ from~\eqref{eq:Rndef} and define 
\be\label{eq:Zn}
Z_n:=\frac{R_n^\oplus}{R_n^\ominus}.
\ee 
Thus, $Z_n$ is the ratio of of the rate of the $\oplus$ infection to the $\ominus$ one. If only a single vertex in $\T_n^d$ would be infected with $\ominus$ (resp.\ $\oplus$), the vertex spreads at rate $1$ (resp.\ rate $Z_n$).  The ratio $Z_n$ is crucial in understanding and formulating the behaviour of the LRC process. For the presentation of the main results, it is convenient to further define 
\be  \label{eq:def_cn}
c_n:=(Z_n-1)\log n, \quad \text{or, equivalently,}\quad Z_n=1+\frac{c_n}{\log n}.
\ee
In the current paper, we work with the following assumption:

\begin{assumption}\label{ass:par}
	We have $\sup_{n\in\N}\alpha_\ominus(n), \sup_{n\in\N}\alpha_\oplus(n)\in[0,d)$. 
\end{assumption}
\noindent This assumption allows us to avoid certain technical difficulties that can arise when $\alpha_\square\uparrow d$ as $n\to\infty$ for $\square\in\{\ominus, \oplus\}$. Furthermore, the case $\alpha_\square(n)>d$ yields entirely different behaviour (see the results for a one-type infection by Chatterjee and Dey in~\cite{ChaDey16}), which we leave for future work. 

The behaviour of the LRC process can be characterised fully by the behaviour of $c_n$. Our first result determines when coexistence and absence of coexistence occur. 

\begin{theorem}[Coexistence vs.\ coexistence]\label{thrm:coex}
	Let $(\cN_n^\ominus(t),\cN_n^\oplus(t))_{t\geq 0}$ denote an LRC process on $\T_n^d$, where $\cN_n^{\square}(0)=v^{\square}$ for $\square\in\{\ominus,\oplus\}$, as defined in Section~\ref{sec:model_def}. Recall $c_n$ from \eqref{eq:def_cn}. Under Assumption \ref{ass:par}, 
	\begin{itemize}
		\item Coexistence, in the sense of \eqref{eq:def_coex}, occurs when $\sup_{n\in\N}|c_n|<\infty$.
		\item Absence of coexistence, in the sense of \eqref{eq:def_abs_coex}, occurs when $\lim_{n\to\infty} |c_n|=\infty$. 
	\end{itemize} 
\end{theorem}

In the case of absence of coexistence, we can further precisely determine the asymptotic size of the infection that occupies a sub-linear portion of the vertices in $\T_n^d$, based on the rate at which $c_n$ tends to infinity. 

We first recall that, for $p\in(0,1)$, a random variable $X$ is \emph{geometrically distributed with success parameter $p$}, and write $X\sim \mathrm{Geo}(p)$, when it has probability mass function
\be \label{eq:geo}
\P{X=k}=p(1-p)^{k-1}, \qquad k\in\N.
\ee 
We then have the following result. 

\begin{theorem}[Absence of coexistence and weak coexistence]\label{thrm:nocoex}
	Let $(\cN_n^\ominus(t),\cN_n^\oplus(t))_{t\geq 0}$ denote an LRC process on $\T_n^d$, where $\cN_n^{\square}(0)=\{v^{\square}\}$ for $\square\in\{\ominus,\oplus\}$, as defined in Section~\ref{sec:model_def}. Recall $M_n$ from~\eqref{eq:Mdef} and $c_n$ from \eqref{eq:def_cn}. Under Assumption \ref{ass:par}, 
	\begin{enumerate}[label=(\roman*)]
		\item\label{case:1} If $|c_n|/(\log n)^2 \to \infty$, then $M_n=1$ with high probability.
		\item\label{case:fin} If $c_n=(c+o(1))(\log n)^2$ for some $c\neq 0$, then $M_n\toindis X$, where $X \sim \mathrm{Geo}(\e^{-1/|c|})$.
		\item\label{case:liminf} If $c_n=o((\log n)^2)$ but $\lim_{n\to\infty}|c_n|=\infty$, then $\log(M_n)\frac{Z_n}{\log n} \toinp 1$.
		\item\label{case:div} If $c_n=o(\log n)$ but $\lim_{n\to\infty}|c_n|=\infty$, then $|c_n|^{-1}(\log n-\log M_n) \toinp 1$.
	\end{enumerate}
\end{theorem}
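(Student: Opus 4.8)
The plan is to show that, up to lower‑order corrections, the two type‑sizes evolve like a pair of independent linear birth (Yule) processes of rates $1$ and $Z_n$, and then to read off the four cases from the elementary asymptotics of such processes. Write $a_k:=|\cN_n^{(1)}(T_k)|$ and $b_k:=|\cN_n^{(2)}(T_k)|=k-a_k$, so that $a_2=b_2=1$ and $N_n^{(1)}=a_n$. By the discrete‑time description of Section~\ref{sec:model_def}, conditionally on the configuration at $T_k$ the $(k+1)$‑st vertex is of Type‑$1$ with probability $q_k=\Sigma_1/(\Sigma_1+\Sigma_2)$, where $\Sigma_i=\sum_{v\text{ uninfected}}\sum_{x\in\cN_n^{(i)}(T_k)}\lambda_i\|v-x\|^{-\alpha_i}$. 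Summing over $v$ first gives $\Sigma_1=a_kR_n^{(1)}-D_1$ and $\Sigma_2=b_kR_n^{(2)}-\lambda D_2$, with the diagonal correction $D_i:=\sum_{x\in\cN_n^{(i)}(T_k)}\sum_{x'\text{ infected},\,x'\neq x}\|x-x'\|^{-\alpha_i}$.

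\emph{Step 1 (mean‑field reduction).} I would first prove that, for each fixed $\eps>0$, with high probability and uniformly over $k\le(1-\eps)n$ one has $D_1=(k/n)\,a_kR_n^{(1)}(1+o(1))$ and $\lambda D_2=(k/n)\,b_kR_n^{(2)}(1+o(1))$, i.e.\ the corrections only contribute the same $(1-k/n)$ factor to numerator and denominator, so that
\be
q_k=\frac{a_k}{a_k+Z_nb_k}\,(1+\eta_k),\qquad \sup_{k\le(1-\eps)n}\E{|\eta_k|}\to0.
\ee
For polynomially small $k$ the crude bound $D_i\le\binom k2$ together with $R_n^{(1)}\asymp n^{1-\alpha_1/d}$ (which uses $\sup_n\alpha_i<d$, Assumption~\ref{ass:par}) already makes the $D_i$, hence $\eta_k$, negligible; for $k$ in the bulk this is where the spatial spreading and concentration of the sums $\sum_{x\in\cN_n^{(i)}(T_k)}\|v-x\|^{-\alpha_i}$ for LRFPP clusters in the instantaneous‑percolation regime enter, to be quoted from the preliminary sections. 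Thus $(a_k)$ is, up to the error $\eta_k$, the size process of the generalised P\'olya urn with a weight‑$1$ ball for Type‑$1$ and a weight‑$Z_n$ ball for Type‑$2$; by the Athreya--Karlin embedding this urn is the jump chain of a pair of independent processes $Y_1\sim\mathrm{Yule}(1)$, $Y_2\sim\mathrm{Yule}(Z_n)$ started from $1$, so that $(a_k,b_k)$ is close in law to $(Y_1(\tau_k),Y_2(\tau_k))$, where $\tau_k$ is the $k$‑th jump time of $Y_1+Y_2$. The last $\eps n$ vertices are handled crudely: each is of Type‑$1$ with probability $O(a_k/(Z_nk))$, and a Yule‑type stochastic domination ($a_n$ is dominated by a pure‑birth chain run for internal time $\asymp\log n/Z_n$) gives $a_n\le a_{(1-\eps)n}(1+O_\pr(\eps))$, hence $\log N_n^{(1)}=\log a_{(1-\eps)n}+O_\pr(\eps)$; the same domination also yields $a_n=O_\pr(1)$ whenever $\log n/Z_n=O(1)$.

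\emph{Step 2 (the four cases).} For (iii)--(iv) it is cleanest to work with the Doob decomposition $\log a_k=C_k+M_k$ on $[2,(1-\eps)n]$: the compensator is $C_k=\sum_{j<k}q_j\log(1+a_j^{-1})=\sum_{j<k}(1+\eta_j)(a_j+Z_nb_j)^{-1}+\sum_{j<k}O\big((a_j(a_j+Z_nb_j))^{-1}\big)$, which using $a_j+Z_nb_j=Z_nj-(Z_n-1)a_j$, elementary estimates for $\sum_j a_j j^{-2}$, and $\sup\E{|\eta_j|}\to0$ evaluates to $\log((1-\eps)n)/Z_n+O_\pr(1)=\log n/Z_n+O_\pr(1)$, while the predictable quadratic variation is $\langle M\rangle_k=\sum_{j<k}O\big((a_j(a_j+Z_nb_j))^{-1}\big)=O_\pr(1)$, so $M_k=O_\pr(1)$. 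Combined with the tail bound of Step~1, $\log N_n^{(1)}=\log n/Z_n+O_\pr(1)$ in all regimes; since $\log n\,(1-1/Z_n)=c_n\log n/(\log n+c_n)\to\infty$ whenever $c_n\to\infty$, this gives $\log(N_n^{(1)}/n)\toinp-\infty$, i.e.\ $N_n^{(1)}/n\toinp0$. Then: (iii) if $c_n=o((\log n)^2)$ then $\log n/Z_n\to\infty$, so dividing by $\log n/Z_n$ yields $\log(N_n^{(1)})\,Z_n/\log n\toinp1$; (iv) if $c_n=o(\log n)$ and $c_n\to\infty$ then $\log n/Z_n=\log n-c_n+O(c_n^2/\log n)$, so $\log n-\log N_n^{(1)}=c_n+O_\pr(1)+O(c_n^2/\log n)$, and dividing by $c_n$ gives $\toinp1$ since $c_n\to\infty$ and $c_n/\log n\to0$; (i) if $c_n/(\log n)^2\to\infty$, then writing $\xi_k$ for the indicator that the $(k+1)$‑st vertex is Type‑$1$, $\P{N_n^{(1)}\ge2}\le\sum_k\E{q_k\indic{a_k=1}}\lesssim\sum_k(Z_nk)^{-1}\lesssim\log n/Z_n\le(\log n)^2/c_n\to0$, so $N_n^{(1)}=1$ with high probability; (ii) if $c_n=(c+o(1))(\log n)^2$ then $Z_n/\log n\to c$ and, from $Y_2(\tau_n)=n-a_n=n-O_\pr(1)$, the martingale convergence $Y_2(t)\e^{-Z_nt}\toas\mathrm{Exp}(1)$, and $Z_n/\log n\to c$, one gets $\tau_n\toinp1/c$; since $a_n=O_\pr(1)$ here and $\sup\E{|\eta_k|}\to0$, $(a_k)$ can be coupled with the urn so that the two agree with probability $\to1$, whence $N_n^{(1)}\toindis Y_1(1/c)$, which is $\mathrm{Geo}(\e^{-1/c})$ because a rate‑one Yule process at a deterministic time $t$ has this law and $Y_1$ a.s.\ has no jump at $1/c$.

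The main obstacle is Step~1: establishing, uniformly along the whole trajectory, that the two LRFPP clusters remain spatially spread out and that the sums $\sum_{x\in\cN_n^{(i)}(T_k)}\|v-x\|^{-\alpha_i}$ concentrate, so that the urn approximation holds with vanishing relative error — this is exactly the role of the instantaneous‑percolation structure and of the hypothesis $\sup_n\alpha_i<d$, and the precise estimates would be imported from the preliminary sections. A secondary technical point is the uniform passage from the idealised urn to the true process, in particular controlling the errors $\eta_k$ coming from $D_1,D_2$ for $k$ close to $n$, where $1-k/n$ is small; this is why the argument is first run up to $(1-\eps)n$ and the remaining $\eps n$ steps are absorbed into a random multiplicative constant, which is immaterial for all four conclusions.
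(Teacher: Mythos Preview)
Your overall strategy --- approximate the two infection sizes by a pair of independent Yule processes of rates $1$ and $Z_n$, then read off the four cases --- is exactly the paper's. The decisive difference is \emph{how} you justify the approximation, and this is where your argument has a real gap.

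\textbf{The main gap is Step 1.} You assert that, uniformly over $k\le(1-\eps)n$, the diagonal corrections satisfy $D_i=(k/n)\,|\cN_n^{(i)}(T_k)|\,R_n^{(i)}(1+o(1))$, so that the urn probability $q_k=a_k/(a_k+Z_nb_k)$ holds up to a vanishing multiplicative error $\eta_k$. This is a \emph{spatial concentration} statement: it says the infected sets are asymptotically uniformly spread over $\T_n^d$, so that for each $x\in\cN_n^{(i)}(T_k)$ the sum $\sum_{x'\text{ infected}}\|x-x'\|^{-\alpha_i}$ is close to $(k/n)R_n^{(i)}$. No such result is in the preliminary sections --- Lemma~\ref{lemma:rateest} only gives the one-sided bounds $D_1\le a_kR^{(1)}_k$ and $\Sigma_2\ge (n-k)(R_n^{(2)}-R^{(2)}_{|B^c|})$, which for $k$ of order $n$ leave $\eta_k$ of order one, not $o(1)$. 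In fact the paper explicitly flags this as open (Problem~\ref{prob:linear_initial_sets} and the discussion around it): the authors were \emph{unable} to show that once both clusters are linear the ratios stay essentially fixed, precisely because they could not establish the spatial uniformity you are invoking. Your Step~1 therefore rests on a statement the paper leaves as an open problem.

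\textbf{How the paper circumvents this.} Rather than proving a two-sided urn approximation, the paper works in continuous time and couples $(\cN_n^{(1)},\cN_n^{(2)})$ to two genuine Yule processes $\BP^{(1)},\BP^{(2)}$ via artificial vertices (Section~\ref{sec:coupling}), obtaining $|\cN_n^{(i)}(t)|=|\BP^{(i)}(t)|-C^{(i)}((0,t])$ with an explicit defect measure. Proposition~\ref{prop:gen_size_artificial_CTBP_tail_bound} then bounds $C^{(i)}$ using only the crude one-sided estimates of Lemma~\ref{lemma:rateest}; this suffices because the defect bound only needs an \emph{upper} bound on the re-infection rate, not its precise value. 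For the ``Phase~II'' part (your ``last $\eps n$ vertices handled crudely''), the paper again uses only one-sided bounds to build a supermartingale (Lemma~\ref{lemma:supermartingale}) and concludes that Type~$1$ grows by at most a constant factor --- no spatial concentration is needed. Cases~\ref{case:1} and~\ref{case:fin} are handled separately: \ref{case:1} via the flooding-time result of~\cite{HofLod23}, and \ref{case:fin} by showing the coupling is \emph{exact} ($C^{(1)}=0$) up to time $\approx\log(n)/Z_n$ and that Type~$2$ then absorbs the remaining $o(n)$ vertices.

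\textbf{On Step 2.} Conditional on Step~1, your Doob decomposition for \ref{case:liminf}--\ref{case:div} and your union bound for \ref{case:1} are reasonable and would give the right answers; your treatment of \ref{case:fin} is close to the paper's. But without Step~1 the compensator computation $C_k=\log(n)/Z_n+O_{\mathbb P}(1)$ is unjustified for $k$ beyond the polynomially-small range, and the whole edifice collapses. If you could actually prove Step~1, you would in fact get the stronger distributional limits of Conjecture~\ref{conj:sym_asym}, which the paper leaves open.
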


\begin{remark}
	Note that in cases $(iii)$ and $(iv)$, $M_n$ tends to infinity in probability, in contrast to the cases $(i)$ and $(ii)$ where $M_n$ converges in distribution without rescaling.
\end{remark}

The main technique in proving Theorems~\ref{thrm:coex} and~\ref{thrm:nocoex} is a coupling of the two infection processes with (continuous-time) branching random walks. Although coupling such infection processes to branching processes (rather than branching random walks) is not novel in and of itself, the main contributions of this work lie mainly in the following: i) We show that the branching random walk coupling can be leveraged much further beyond the point when the coupling ceases to be exact, the usefulness of which was not exploited in the literature before, ii) We extend the standard branching process coupling to a branching random walk coupling in our spatial setting with more than one infection type, and iii) We allow the ratio $Z_n$ of rates (recall \eqref{eq:Zn}) to depend on $n$, which is not common in the literature. We refer the reader to Section \ref{sec:main_cont} for a more detailed discussion regarding the contribution of this work.

In the remainder of this section, we provide a brief overview of the proofs of Theorems \ref{thrm:coex} and \ref{thrm:nocoex}, after which we describe the main contributions of this work in more detail. We conclude with some examples and state a few conjectures and open problems. 

\begin{remark}\label{rem:wlog}
	In the remainder of this article we make the following assumptions without loss of generality, which help simplify calculations and notation:
	\begin{itemize}
		\item We set $\lambda_\ominus(n)=1$ for all $n\in\N$ and $\lambda_\oplus(n)=\lambda(n)$ for some $\lambda(n)>0$. 
		\item By symmetry, we can assume that $Z_n\geq 1$ for all $n\in\N$.
		\item Throughout we work with the process $(\cN_n^{\ominus,\kappa}(t),\cN_n^{\oplus, \kappa}(t))_{t\geq 0}$ with $\kappa=1/R_n^\ominus$. We suppress the $\kappa $ in the superscript for ease of writing.
	\end{itemize}
\end{remark}
Assuming that $Z_n\geq 1$ implies that, when absence of coexistence occurs, the smaller infection size $M_n$ is attained by the $\ominus$ infection.

\subsection{Overview of the proof} 

\label{sec:overview}

Let us discuss, at a heuristic level, the main ideas that we use to prove Theorems~\ref{thrm:coex} and~\ref{thrm:nocoex}. 

\subsubsection{Coexistence} Let us start with the main proof idea of the first part of Theorem~\ref{thrm:coex}, as this is more straightforward to describe. The key tool we deploy is a coupling of the LRC process with two independent branching random walks. That is, we couple the spread from the vertices $v^\ominus$ and $v^\oplus$ with two independent branching random walks  (see Section~\ref{sec:prelims} for definitions) $\cX_n^\ominus(t)$ and $\cX_n^\oplus(t)$ in which particles produce children at rate $1$ and $Z_n$, respectively, such that $\cN_n^{\square}(t)\subseteq \cX^\square(t)$ for $\square\in\{\ominus, \oplus\}$. 

This roughly works in the following way. When vertices are infected, other infected vertices can no longer spread to these newly infected vertices, which decreases the rate at which the infection spreads. To counter this depletion of vertices, we augment the graph with \emph{artificial} vertices to which the infections can spread, so that the rate at which either the $\ominus$ or $\oplus$ infection spreads is linear its current size. In the branching random walks $\cX^\square(t)$, the rate at which particles are born is also linear in the size of the process, we can couple the augmented infection processes to the two branching random walks.

This yields an upper bound on the sizes of the infection sets at any time $t\geq0$ by the respective sizes of the branching random walks. We express the total size of the Type $\square$ infection at time $t$ as $|\cN_n^{\square}(t)|=|\cX^\square(t)|-C^{\square}((0,t])$ for $\square\in\{\ominus,\oplus\}$, where the quantity $C^{\square}((0,t])$ is to be interpreted as the `defect' in coupling the infection sets with the branching random walks. This defect arises by allowing infected vertices to `re-infect' other, already infected, vertices. These re-infections correspond to births in the branching random walk whilst such re-infections do not actually occur in the LRC process, thus causing an error or defect in the coupling. Using the condition $\sup_n c_n< \infty$ of Theorem~\ref{thrm:coex} and using a point-process representation of the defect, we show that with high probability,
\begin{itemize}
	\item[a.] By the time $t_n\approx \log (n)/Z_n$, the size of each branching random walk is $\delta n$ for some $\delta>0$, and
	\item[b.] The defect $C^{\square}((0,t_n])$ is smaller than $\delta' n$ for some $\delta'<\delta$, for both $\square\in\{\ominus,\oplus\}$. 
\end{itemize}
As a result, the size of the infection sets $|\cN_n^\ominus(t_n)|$ and $|\cN_n^\oplus(t_n)|$ themselves are both at least $\eps n$ for some $\eps=\eps(\delta,\delta')\in(0,\delta)$. In particular, the final sizes $N_n^\ominus=|\cN_n^\ominus(T_{\mathrm{cov}})|$ and $N_n^\oplus=|\cN_n^\oplus(T_{\mathrm{cov}})|$ are both at least $\eps n$, which implies coexistence.

\subsubsection{Absence of coexistence}\label{sec:overview_absence}

Let us now discuss how to prove the second part of Theorem \ref{thrm:coex}, i.e.\ that absence of coexistence occurs  when $c_n\to\infty$. To prove this, we divide the time interval $(0,T_{\mathrm{cov}}]$, which starts from the initial infections and ends when all the vertices are infected, into two phases, namely $(0,t_n]$ and $(t_n,T_{\mathrm{cov}}]$, where $t_n\approx \log(n)/Z_n$. We use the coupling of the competition process to two branching random walks, as described above, to show that at the end of the first phase, $|\cN_n^\ominus(t_n)|\approx \exp(\log(n)/Z_n)$, while $|\cN_n^\oplus(t_n)|\geq \xi n$ for some constant $\xi \in (0,1)$. Again, this follows by using the growth rate of the respective branching random walks, combined with showing that the defect $C^{\square}((0,t_n])$ is not `too large' for both $\square\in\{+,-\}$. It is readily checked that $\exp(\log(n)/Z_n)=\exp(\log(n)/(1+c_n/\log n))=o(n)$ when $c_n \to \infty$, so that, at the end of the first phase, the $\ominus$ infection has covered a negligible portion of the vertices, while the $\oplus$ infection has covered a positive fraction of the vertices. We know that the $\oplus$ infection is significantly larger with high probability due to Remark~\ref{rem:wlog}.

For the second phase $(t_n,T_{\mathrm{cov}})$, we use a  P\'olya urn type argument to show that the significant advantage achieved by the $\oplus$ infection at the end of the first phase is strong enough to ensure that the size of the $\ominus$ infection does not increase beyond a multiplicative factor of its size at the end of the first phase.  Overall, at time $T_{\mathrm{cov}}$, the size of the $\ominus$ infection is thus still $o(n)$, which shows there is no coexistence of the two types. The P\'olya urn argument is based on a discrete-time description of the competition process, where the initial configuration of the urn (which equals the infection sizes at the end of the first phase) grows with $n$. 

\subsubsection{Size of the losing type} Finally, we describe the more detailed results pertaining the exact asymptotic size of $M_n$ in the different cases of Theorem~\ref{thrm:nocoex}. Again, by Remark~\ref{rem:wlog}, we know that $M_n=N_n^\ominus$ with high probability.

For Theorem \ref{thrm:nocoex} $(i)$, we modify a result from~\cite[Theorem 1.1]{HofLod23}, which tells us that a single $\oplus$ infection takes roughly $2\log(n)/ Z_n$ amount of time to reach all vertices. On the other hand, the time for vertex $v^\ominus$, which is initially infected by the $\ominus$ infection, to reach \emph{any} other vertex, is an exponential random variable with rate roughly $1$. Hence, when $c_n/(\log n)^2$ diverges or, equivalently, $Z_n/\log n$ diverges, it follows that the $\oplus$ infection reaches all vertices by time $2\log(n)/Z_n=o(1)$, whilst the $\ominus$ infection does not spread to any vertex in that time with high probability. As a result $N_n^\ominus=1$ with high probability.

Theorem \ref{thrm:nocoex} $(ii)$ uses almost the same approach, now modifying another result from~\cite[Theorem 1.1]{HofLod23}, namely that by time $\log(n)/Z_n$, a single $\oplus$ infection reaches \emph{almost all} vertices, i.e.\ only a negligible proportion of vertices are not reached. At the same time, since $c_n=(c+o(1))(\log n)^2$ or, equivalently, $Z_n=(c+o(1))\log n$, it follows that in time $\log(n)/Z_n=c^{-1}+o(1)$, the $\ominus$ infection can reach only a finite number of vertices (in the sense that this amount does not increase with $n$) since it initially infects other vertices at rate approximately $1$. To obtain the exact limiting distribution of the size of the $\ominus$ infection, we use the coupling with the two independent branching random walks and show that up to time $t_n:=(1-\delta_n)\log(n)/Z_n$, for some $\delta_n=o(1)$, the coupling of the $\ominus$ infection with a rate-one branching random walk $\cX_n^\ominus(t)$ is in fact \emph{exact} with high probability. That is, the defect $C^\ominus((0,t_n])$ equals zero with high probability. We then use a similar P\'olya urn argument as discussed in Section \ref{sec:overview_absence} to show that the uninfected vertices remaining at time $t_n$ (of which there are $o(n)$ many) are then \emph{all} infected by the $\oplus$ infection, since it has such an overwhelming advantage by time $t_n$.

For parts $(iii)$ and $(iv)$ of Theorem \ref{thrm:nocoex}, we leverage the proof  of absence of coexistence discussed above in Section \ref{sec:overview_absence}, which tells us that at time $t_n$ the $\ominus$ infection is of size at most $An^{1/Z_n}$ for a large constant $A=A(\eps)$ with probability at least $1-\eps$. This yields a  sufficient upper bound for $N_n^\ominus$. A matching lower bound follows by once more using the coupling to the branching random walks. Here we again show that up to time $t_n\approx \log(n)/Z_n$, the defect $C^\ominus((0,t_n])$ of the coupling of the $\ominus$ infection with a Yule process is not `too large'. Hence, we can bound the size of the $\ominus$ infection from below by the size of the branching random walk at time $t_n$ (which is roughly $n^{1/Z_n}$), minus a negligible term coming from the defect. This implies a lower bound on $N_n^\ominus$ of the right order.

\subsection{Related literature and main contribution}
As far as the authors are aware, the LRC model studied in this paper is the first of its kind, in the sense that it is the first competing first-passage percolation model where the edge-weights depend on the spatial embedding of the vertices in the graph. It is inspired by the LRFPP model of Chatterjee and Dey~\cite{SC_PD_LRFPP}, where the behaviour of a single type infection on $\Z^d$ is studied. This behaviour is not limited to $\alpha<d$, and the study of the competition version of this model for $\alpha\geq d$ is future work. The qualitative results in~\cite{SC_PD_LRFPP} for the regime $\alpha<d$, however, play no role in the analysis here. Instead, the quantitative results of the first author and Van der Hofstad~\cite{HofLod23} regarding the single type LRFPP on $\T_n^d$ are used to a greater extent. Moreover, a spatial variation on the coupling of the first-passage percolation process to a branching process as in~\cite{BhaHof12}, underpins the analysis here.

Competing first-passage percolation on finite random graphs, has received a significant interest, where many well-known models have been studied. These include, regular random graphs~\cite{Per_RRG_17}, the configuration model with finite-variance degrees~\cite{Svante_CM_fin_var_19}, infinite-variance and finite-mean degrees~\cite{Remco_Mia_WTA,van2023universalwinner} (see also the work of Baroni, Van der Hofstad, and Komj\'athy on competing FPP with deterministic edge-weights~\cite{BarHofKom15,BarHofKom15.2}),  and infinite-mean degrees~\cite{deijfen_vanderhofstad_sfragara_2023}. The preprint \cite{van2023universalwinner} shows a universal non-coexistence result for CMs with finite-mean but infinite-variance degrees under general assumptions. The literature on first-passage percolation processes with a single type on random graphs extends to many other models as well. This includes, but does not limit itself, to the complete graph~\cite{Jans99,AddBrouLug10,BhaHof17,HofHoogMie06,HoogMie08,BhaHof12,BhaHofHoog13,EckGoodHofNar13,EckGoodHofNar15.1,EckGoodHofNar15.2}, the Erd{\H o}s-R\'enyi random graph~\cite{BhaHofHoog11,HofHoogMie01,HofHoogMieg02}, the configuration model~\cite{AdrKom18,BarHofKom17,BarHofKom19,BhaHofHoog10.2,BhaHofHoog17}, preferential attachment models~\cite{JorKom20,JorKom22}, and spatial models such as scale-free percolation and geometric inhomogeneous random graphs~\cite{HofKom17,KomLod20,ColLimHinJahVal23}. Noteworthy is the recent work of Komj\'athy et al.~\cite{KomLapLen21,KomLapLenSchal23_1,KomLapLenSchal23_2}, who extensively study a degree-penalised variation of first-passage percolation on geometric inhomogeneous random graphs. In this model, edges incident to high-degree vertices typically have higher transmission times. This is, at least in spirit, similar to the spatial penalisation in LRFPP, though the underlying random graph model is very different.

\subsubsection{Main contribution}\label{sec:main_cont}
In many of the works mentioned above, both for single-type FPP processes and competition processes, approximating first-passage percolation with branching processes is a very useful and commonly used technique. We highlight here, in particular, the work of Bhamidi and Van der Hofstad on FPP on the complete graph~\cite{BhaHof12}, where a precise coupling between an FPP process on the complete graph and a CMJ branching process is provided. The main contribution of the current work is a variation of this coupling, in which we couple the infection processes to branching random walks. Here we also provides a more in-depth analysis of the defect of the coupling, where we can control the size of the defect further into the process. This analysis is crucial to many of the proofs in this paper. 

In~\cite{BhaHof12}, it is proved that the coupling between the FPP process and the CMJ branching process approximation is \emph{exact} up to a certain time $T$, with high probability. This result allows the authors to obtain precise asymptotic results for the typical weighted distance in the complete graph, as well as the hopcount (the number of edges on the shortest edge-weighted path). The analysis of a variant of this coupling carried out here, additionally identifies the \emph{growth rate} of the coupling defect. That is, when the coupling is no longer exact, we control the error between the infection processes and the branching random walk approximation over time. In particular, we show that, up to some time $T'\gg T$, this error is negligible compared to the size of the entire branching random walks. This allows us to use the coupling to a higher degree and obtain useful quantitative bounds, much further into the competition process than previously possible.

The work of Ahlberg, Deijfen, and Janson on competing FPP on the configuration model with finite-variance degrees~\cite{Svante_CM_fin_var_19} is another example where branching-process approximations up to the time that the approximations are exact, are used. From that point onwards, their analysis shows that the competition dynamics are essentially deterministic, in the sense that the ratio of the sizes of the two competing infections stays roughly the same. These results are harder to prove for the model studied here, due to the spatial embedding of the vertices, even when the spatial dependence is weak (long-range parameters $\alpha_\square$ close to zero), see also Open Problem~\ref{prob:linear_initial_sets} below. However, our improved control of the coupling to branching processes still allows us to obtain the desired results, despite not being able to solve Problem~\ref{prob:linear_initial_sets}.

Finally, in most of the related literature discussed above, model parameters are fixed and independent of $n$. This generally yields only the possibilities $c_n\equiv 0$ for which coexistence occurs, or $c_n=\lambda \log n$ for some $\lambda\neq 0$ for which absence of coexistence, in the sense of Case $(iii)$ of Theorem~\ref{thrm:nocoex}, occurs. Allowing the model parameters to depend on $n$ uncovers more possible behaviour, as in Cases $(i)$, $(ii)$, and $(iv)$ in Theorem~\ref{thrm:nocoex}, but also coexistence for which $c_n$ is not equal to zero, but is only bounded. 

\subsection{Discussion}

Let us discuss a few examples in this section by taking some particular choices of $\alpha_\ominus(n)$, $\alpha_\oplus(n)$, and $\lambda(n)$, for which we can apply Theorems~\ref{thrm:coex} and~\ref{thrm:nocoex}.

\begin{example}[Equal and fixed long-range parameters.]\label{ex:fixed} Take $\alpha_\ominus(n)=\alpha_\oplus(n)=\alpha \in [0,d)$. In this case, $Z_n=\lambda(n)$ and so $c_n=(\lambda(n)-1)\log{n}$. In this case, we observe absence of coexistence when $\log(n)(\lambda(n)-1)$ diverges and coexistence when $
	\log(n)(
	\lambda(n)-1)$ is bounded in $n$. Moreover,  
	\begin{enumerate} 
		\item[$(i)$] $M_n=1$ with high probability when $\lambda(n)\gg \log n$.	\item[$(ii)$] $M_n$ converges to a geometric random variable when $\lambda(n)/\log{n}$ converges to a positive constant;
		\item[$(iii)$] $M_n=n^{(1+o(1))/\lambda(n)}$ when $\lambda(n)=o(\log{n})$; 
		\item[$(iv)$] $M_n=n\e^{-(1+o(1))(\lambda(n)-1)\log n}$ when $\lambda(n) \to 1$ but $\log (n)(\lambda(n)-1)$ diverges; 
	\end{enumerate}
	$M_n$ is linear in $n$ with high probability, when $(\lambda(n)-1)\log{n}$ is bounded in $n$. In particular, when $\lambda(n)=\lambda>1$ fixed, then $M_n=n^{1/\lambda+o(1)}$, while when $\lambda(n)=1$, $M_n$ is linear. We remark that the particular case where $\lambda(n) = \lambda \geq 1$ is fixed, is, in general, the setting of competing first-passage percolation models most often studied in the literature.
\end{example}

When $\alpha_\ominus=\alpha_\oplus=0$, the spatial embedding of the vertices plays no role, and the competition process takes place on the complete graph where the $\ominus$ (resp.\ $\oplus$) infection spreads through edges at rate $1$ (resp.\ $Z_n=\lambda(n)$). This setting can be exactly represented as an $n$-dependent  P\'olya urn, where the replacement matrix depends on $n$. In particular, adopting the setting of generalized P\'olya Urns of Janson \cite{Janson2006}, we consider a P\'olya Urn with two types of balls - $\ominus$ and $\oplus$ balls, where upon drawing a ball of some type, it is added back to the urn along with some additional balls according to the replacement matrix 
\[
\begin{pmatrix}
	1 & 0\\
	0 & Z_n 
\end{pmatrix}.
\]
This means, upon drawing a $\ominus$ ball (resp.\ $\oplus$ ball), one extra $\ominus$ ball is (resp.\ $Z_n$ extra $\oplus$ balls are) added back to the urn, along with the drawn ball. In this description, we implicitly think of $Z_n$ as being a positive integer, but that need not be, and in particular, the process has a description as a Markov process on the state space $\{(x,y)\in \mathbb{R}^2:x,y\geq 0, x+y>0\}$, see \cite[Remark $1.11$]{Janson2006}. However, following Janson \cite{Janson2006}, we use phrases like `$Z_n$ balls are drawn' for ease of understanding. 

Let $B^{\square}_n$ denote the number of Type $\square$ balls after $n$ draws, for $\square\in\{\ominus,\oplus\}$, where we initialise the urn with $B^{\ominus}_0=1, B^{\oplus}_0=Z_n$. It readily follows that $B_{n-2}^\ominus\overset d= N_n^\ominus$ and $B^\oplus_{n-2}/Z_n\overset d= N_n^\oplus$ when $\alpha_\ominus=\alpha_\oplus=0$. When $Z_n\equiv Z$ for some fixed $Z\geq 1$, results by Janson~\cite[Theorem $1.4$]{Janson2006} imply Theorems~\ref{thrm:coex} and~\ref{thrm:nocoex}. However, allowing $Z_n$ to depend on $n$ allows us to obtain more kinds of behaviour, in particular cases $(i)$, $(ii)$, and $(iv)$ in Theorem~\ref{thrm:nocoex}, which cannot be observed when $Z_n$ is constant. The results of Janson~\cite[Theorem $1.4$]{Janson2006} do state more detailed distributional convergence of $B_n^\square$ (when appropriately rescaled) in the case that $Z_n\equiv Z$, which we believe to be true also when $\alpha_\ominus=\alpha_\oplus=\alpha\in[0,d)$ are fixed. We discuss this further in Conjecture~\ref{conj:sym_asym}.

\begin{example}[Converging long-range parameters] \label{ex:conv} Fix $\alpha_\ominus,\alpha_\oplus\in[0,d)$ and let $\alpha_\square(n)\to\alpha_\square$ as $n\to\infty$ for $\square\in
	\{\ominus,\oplus\}$. Let $\|\cdot\|_p$ denote the $\ell^p$-norm on the torus for $p\in[1,\infty)\cup\{\infty\}$. The sum $\sum_{x \in \T_n^d, x\neq 0}\|x\|_p^{-\alpha_\square(n)}$ for $\alpha \in [0,d)$, and $\square\in\{\ominus,\oplus\}$ equals $(R+o(1))n^{1-\alpha_\square(n)/d}$, where $R=R(\alpha_\square(n), d,p)$ depends on $\alpha_\square(n)$, the dimension $d$, and $p$, see \cite[Appendix]{HofLod23}. Moreover, $R(\alpha_\square(n),d,p)\to R(\alpha_\square,d,p)$ as $n\to\infty$. We thus obtain
	\be \label{eq:Zncase}
	Z_n=\lambda(n)\Big(\frac{R(\alpha_\oplus,d,p)}{R(\alpha_\ominus,d,p)}+o(1)\Big)n^{(\alpha_\ominus(n)-\alpha_\oplus(n))/d}.
	\ee 
	We can again, according to the criteria in Theorems~\ref{thrm:coex} and~\ref{thrm:nocoex}, determine whether coexistence or absence of coexistence occurs, and in the latter case, what the asymptotic size of the losing type is. This depends on how large/small $\lambda(n)$ is, whether $\alpha_\ominus(n)$ and $\alpha_\oplus(n)$ converge to different limits or to the same limit and at what rate, as well as the rate of convergence of $R(\alpha_\square(n),d,p)$ to $R(\alpha(\square,d,p)$. 
\end{example}

\subsubsection{Conjectures and Open problems}

As stated in Example~\ref{ex:fixed}, we believe that stronger distributional results can be achieved when coexistence occurs and in Case $(iii)$ of absence of coexistence in Theorem~\ref{thrm:nocoex}. 

\begin{conjecture}[Distributional convergence for (absence of) coexistence]\label{conj:sym_asym}
	Consider the same definitions and notation as in Theorem~\ref{thrm:nocoex} and let $E_1,E_2$ be two i.i.d.\ rate-one exponential random variables. If $c_n=(c+o(1))\log n$ for some $c\in\R$ and $|c_n|\to\infty$, then 
	\begin{align} 
		M_n n^{-\min\{Z_n,1/Z_n\}}&\toindis E_1 E_2^{-1/(|c|+1)}.
		\intertext{If, instead, $\lim_{n\to\infty}c_n=c\in\R$ exists, then} 
		\frac{M_n}{n} &\toindis  \frac{E_1}{E_1+E_2\e^c}.
	\end{align}
\end{conjecture}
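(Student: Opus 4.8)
The plan is to transport the non-spatial argument underlying Remark~\ref{rem:nospace} — essentially Janson's analysis~\cite{Janson2006} of the generalized P\'olya urn with replacement matrix $\mathrm{diag}(1,Z_n)$ through its continuous-time embedding — to the spatial setting, using the branching-process coupling of Section~\ref{sec:overview} to supply the random prefactors and a mean-field (urn) comparison to carry it through the crossover. Recall that in the embedding the two types are two independent Yule processes $\BP^{(1)},\BP^{(2)}$ of rates $1$ and $Z_n$ started from one individual each, so $|\BP^{(1)}(t)|\e^{-t}\toas W_1$ and $|\BP^{(2)}(t)|\e^{-Z_nt}\toas W_2$ with $W_1,W_2$ independent rate-one exponentials. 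In the non-spatial case the total number of infected vertices at real time $t$ equals $|\BP^{(1)}(t)|+|\BP^{(2)}(t)|$ exactly, so $T_{\mathrm{cov}}$ solves $W_1\e^{t}+W_2\e^{Z_nt}=n(1+o(1))$ and $N_n^{(1)}=|\BP^{(1)}(T_{\mathrm{cov}})|=(1+o(1))W_1\e^{T_{\mathrm{cov}}}$; solving the implicit equation gives $\e^{T_{\mathrm{cov}}}=(1+o(1))(n/W_2)^{1/Z_n}$ when $c_n$ diverges (Type-$2$ asymptotically carries all the mass) and $\e^{T_{\mathrm{cov}}}=(1+o(1))n/(W_1+W_2\e^{c})$ when $c_n\to c$, hence $N_n^{(1)}n^{-1/Z_n}\to W_1W_2^{-1/Z}$ and $N_n^{(1)}/n\to W_1/(W_1+W_2\e^{c})$ respectively. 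The whole content of Conjecture~\ref{conj:sym_asym} is thus that the spatial LRC process stays close enough to this picture, with the two limiting laws produced by the \emph{same} branching martingale limits $W_1,W_2$.

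For the spatial model I would fix a small $\eta>0$ and split $(0,T_{\mathrm{cov}}]$ at the time $s_n$ at which the total infected set first has size $\eta n$. On $(0,s_n]$ the total infected set has size $o(n)$ on the way up, so the point-process bookkeeping of the coupling defects that already underlies Theorems~\ref{thrm:nocoex} and~\ref{thrm:coex} gives $C^{(i)}((0,s_n])\le K\eta\,|\BP^{(i)}(s_n)|$ with high probability, for some absolute constant $K<\infty$ (the re-infection intensity into the Type-$i$ cluster is at most $|\cN_n^{(i)}(t)|$ times the fraction of infected vertices, which is at most $\eta$ throughout). Consequently, up to a factor $1+O(\eta)$, $|\cN_n^{(1)}(s_n)|=W_1\e^{s_n}$ and $|\cN_n^{(2)}(s_n)|=W_2\e^{Z_ns_n}$, and $(|\cN_n^{(1)}(s_n)|,|\cN_n^{(2)}(s_n)|)$ is to leading order the solution of $W_1\e^{s_n}+W_2\e^{Z_ns_n}=\eta n$: so $|\cN_n^{(1)}(s_n)|\approx W_1(\eta n/W_2)^{1/Z_n}$ and $|\cN_n^{(2)}(s_n)|\approx\eta n$ when $c_n$ diverges, while $|\cN_n^{(1)}(s_n)|\approx\eta n\,W_1/(W_1+W_2\e^c)$ and $|\cN_n^{(2)}(s_n)|\approx\eta n\,W_2\e^c/(W_1+W_2\e^c)$ when $c_n\to c$. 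This first step is essentially covered by the machinery already developed in the paper.

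The interval $(s_n,T_{\mathrm{cov}}]$ is the crux. Here one must show that the competition is governed, to leading order and with errors vanishing as $\eta\downarrow0$, by the \emph{non-spatial} P\'olya urn started from $\bigl(|\cN_n^{(1)}(s_n)|,\,Z_n|\cN_n^{(2)}(s_n)|\bigr)$ Type-$1$ and (weighted) Type-$2$ balls — the very urn appearing in the $\alpha_1=\alpha_2=0$ case. Concretely, for each remaining uninfected vertex $v$ the Type-$i$ infection rate $\sum_{x\in\cN_n^{(i)}(t)}(R_n^{(1)})^{-1}\lambda_i\|x-v\|^{-\alpha_i}$ should concentrate, uniformly over the uninfected $v$, around its mean-field value — which equals $|\cN_n^{(1)}(t)|/n$ for $i=1$ and $Z_n|\cN_n^{(2)}(t)|/n$ for $i=2$, by the definitions of $R_n^{(1)}$ and $R_n^{(2)}$ — provided the two growing clusters equidistribute over $\T_n^d$; in the instantaneous-percolation regime $\alpha_i<d$ the kernel $\|\cdot\|^{-\alpha_i}$ is summable enough that no single vertex dominates a cluster sum, which is what makes such equidistribution plausible. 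Granting this, the draw probabilities in the discrete description of Section~\ref{sec:model_def} agree asymptotically with those of the urn, so Janson's results for urns with a diverging initial configuration (and, in the regime $c_n\to c$, a vanishing spectral gap $Z_n-1\to0$) apply: the $\eta$-dependence cancels between the phase-$1$ state and the phase-$2$ urn growth, leaving $N_n^{(1)}n^{-1/Z_n}\to W_1W_2^{-1/Z}$ in the first regime and $N_n^{(1)}/n\to W_1/(W_1+W_2\e^c)$ in the second, with $W_1=E_1$ and $W_2=E_2$.

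The main obstacle is exactly this equidistribution/concentration statement for \emph{linearly-sized} infected sets, which is essentially the content of the open Problem~\ref{prob:linear_initial_sets}. For Theorems~\ref{thrm:nocoex} and~\ref{thrm:coex} one needs only one-sided, polynomially lossy control of the cluster sums, which the martingale/urn comparison of Section~\ref{sec:overview_absence} supplies; but identifying the limiting \emph{distribution} requires the sums to be shown equal to their mean-field values with a $(1+o(1))$ error throughout the crossover window, and controlling the spatial correlations of both clusters at that precision — even under the weak dependence $\alpha_i<d$ — is genuinely hard. A natural route is a dyadic bootstrap over size scales $2^k$, $\eta n\le2^k\le n$, proving at each scale that each cluster fills a $(1-o(1))$-fraction of every mesoscopic sub-box it has entered; making the accumulated error summable over the $\Theta(\log n)$ scales, while simultaneously tracking the two competing clusters, is the delicate point, and this is what remains open.
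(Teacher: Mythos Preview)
This statement is a \emph{conjecture} in the paper, not a theorem: the authors explicitly say they are unable to prove it when $\alpha_1,\alpha_2$ are not both zero, and they leave the missing ingredient as the open Problem~\ref{prob:linear_initial_sets}. So there is no paper proof to compare against. What the paper does offer is a heuristic outline, and your proposal matches that outline essentially point for point: a phase-$1$ interval $(0,s_n]$ on which the branching-process coupling of Section~\ref{sec:CTBP_approx} identifies the sizes as $(W_1\e^{s_n},W_2\e^{Z_ns_n})$ up to $(1+o(1))$ factors, followed by a phase-$2$ interval on which one would like the dynamics to reduce to the non-spatial P\'olya urn. You correctly locate the obstruction in the second phase --- the need for the cluster sums $\sum_{x\in\cN_n^{(i)}(t)}\|x-v\|^{-\alpha_i}$ to concentrate around their mean-field values uniformly over uninfected $v$ once the clusters are linearly sized --- and you correctly flag that this is precisely Problem~\ref{prob:linear_initial_sets}, which is open.

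Two minor remarks on where your write-up goes slightly beyond (or slightly misses) the paper's discussion. First, the paper's reduction via Problem~\ref{prob:linear_initial_sets} is only spelled out for the coexistence regime $c_n\to c$, where both clusters are linear at the handover time; in the regime where $c_n$ diverges but $Z_n\to Z$, Type-$1$ is sublinear at time $s_n$, so Problem~\ref{prob:linear_initial_sets} as stated does not literally apply and one would need a variant allowing $|A|=o(n)$. Your urn heuristic covers this case too, but the required equidistribution input is not quite the same statement. Second, your phase-$1$ claim that $C^{(i)}((0,s_n])\le K\eta\,|\BP^{(i)}(s_n)|$ is stronger than what Proposition~\ref{prop:gen_size_artificial_CTBP_tail_bound} actually delivers: that proposition gives bounds of the form $C^{(i)}\le n\e^{-m(1+\delta)}$ at deterministic times $(\log n-m)/Z_n$, which suffices for tightness but not, as written, for the $(1+O(\eta))$ multiplicative control you invoke. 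Getting the sharp phase-$1$ statement with the correct random prefactors $W_1,W_2$ would itself require a little more work (essentially the almost-sure convergence in Lemma~\ref{lem:size_CTBP} combined with a refinement of the defect bound), though this part is much more tractable than the phase-$2$ gap.

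In short: your proposal is not a proof, and you say so; it is the same strategy the paper sketches, blocked at the same point.
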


\noindent In fact, we are even unable to show the following weaker result when $c_n\to c$, which we leave as an open problem:

\begin{problem}[Distributional asymmetry of the final sizes]\label{prob:sym_asym}
	Consider the same definitions and notations as in Theorem~\ref{thrm:nocoex}. If $\lim_{n\to\infty}c_n=c\in\R$ exists,
	then 
	\be 
	\lim_{n\to\infty}\mathbb P(N_n^{\ominus}<N_n^{\oplus})=\frac{1}{1+\e^{-c}}.
	\ee
\end{problem}

\noindent It is clear that Conjecture~\ref{conj:sym_asym} implies Problem~\ref{prob:sym_asym}. Towards proving the third part of Conjecture~\ref{conj:sym_asym}, by using a branching random walk approximation, for sufficiently small $\eps>0$, we can check that for $t_{\eps}:=\log(\eps n)$ we have that $|\cN_n^{\ominus}(t_{\eps})|$ and $|\cN_n^{\oplus}(t_{\eps})|$ are roughly $E_1 \eps n$ and $E_2 \eps \e^{c} n$ respectively, where $E_1, E_2$ are two i.i.d.\ rate-one Exponential random variables. Note that this implies that not all vertices of the torus are infected by time $t_{\eps}$ with probability close to $1$, for  small $\eps$, although both the infections have reached linearly many vertices. We were unable to apply similar techniques as used in e.g.~\cite{Svante_CM_fin_var_19}, to show that, at this stage, the ratios of the two infections remain roughly unchanged until the end of the process, due to the spatial embedding and long-range effects of the competition process. We leave this as an open problem:

\begin{problem}\label{prob:linear_initial_sets}
	Consider the same definitions and notations as in Theorem~\ref{thrm:nocoex}. Assume $c:=\lim_{n \to \infty} c_n$ exists. Take $A,B \subset \T_n^d$ with $|A|=\floor{an}$ and $|B|=\floor{bn}$ for $a,b>0$ with $A \cup B \neq \T_n^d$ and $A \cap B = \emptyset$, and set $\cN_n^{\ominus}(0)=A$ and $\cN_n^{\oplus}(0)=B$. Then $\frac{N_n^{\ominus}}{n} \toinp \frac{a}{a+b}$, and $\frac{N_n^{\oplus}}{n} \toinp \frac{b }{a+b}$.
\end{problem}

\noindent This problem, coupled with our earlier observation on the infection sizes up to time $t_{\eps}$, resolves Conjecture~\ref{conj:sym_asym} in the following way: we divide time into intervals $[0,t_{\eps})$ and $[t_{\eps}, T_{\mathrm{cov}}]$. At the end of the first time interval both the processes reach a linear size, with some arbitrary infection sets. Applying the Markov property at time $t_{\eps}$ and using problem~\ref{prob:linear_initial_sets} shows that $N_n^{\ominus}/n$ and $N_n^{\oplus}/n$ converge in distribution to $\frac{E_1}{E_1+\e^c E_2}$ and $\frac{e^c E_2}{E_1+\e^c E_2}$, respectively. Using P\'olya urns one can check Problem~\ref{prob:linear_initial_sets} is true for $\alpha_\ominus=\alpha_\oplus=0$, as discussed in Example~\ref{ex:fixed}. When the long-range parameters are bounded away from $0$, we were unable to resolve this problem.\\

\textbf{Long-range parameters approaching $d$.} \ \ \ 
The main assumption that we make is that $\sup_{n\in\N} \alpha_\ominus(n), \sup_{n\in\N} \alpha_\oplus(n)< d$. This is necessary for parts of the analysis carried out in the paper. When  either $\alpha_\ominus{(n)}$ or $\alpha_\oplus{(n)}$ converges to $d$,  estimates of the quantities $R_n^\ominus$ and $R_n^\oplus$, as in~\eqref{eq:Rndef}, fail to hold, which play a crucial role in the analysis. The case when either or both of $\sup_{n\in\N} \alpha_\ominus(n)$ and $\sup_{n\in\N} \alpha_\oplus(n)$ equals $d$, remains open. In particular, we leave the following conjecture, which is a first step beyond $\alpha_\ominus,\alpha_\oplus\in[0,d)$ into more unconstrained choices for the values of $\alpha_\ominus,\alpha_\oplus\geq 0$: 

\begin{conjecture}[(Absence of) coexistence for long-range parameters equal to $d$]
	Consider the case when $\alpha_\ominus(n)=\alpha_\oplus(n)=d$ for all $n\in\N$, and $\lambda(n)=\lambda>0$ a fixed constant. Then, coexistence occurs if and only if $\lambda=1$.
\end{conjecture}

\noindent The analysis of several crucial technical tools that we exploit in this paper does not carry through  when the long-range parameters are exactly equal to $d$. In particular, we are unable to prove results equivalent to those in Proposition~\ref{prop:gen_size_artificial_CTBP_tail_bound} and Lemma~\ref{lemma:phase2} (though we expect them to be true), both when we expect coexistence and the absence of it to hold. Here, our approximations of the rate at which infected vertices infect the remaining uninfected vertices seems to be too crude.\\

\textbf{On powers of exponentials.}\ \ \ 
In the model definition, a vertex $v$, infected by the $\square$ infection, spreads to an uninfected vertex $u$ at rate $(R_n^\ominus)^{-1}\lambda_\square \|u-v\|^{-\alpha_\square}$, with $\square\in\{\ominus, \oplus\}$. This is equivalent to saying that it takes the $\square$ infection $(R_n^\ominus)^{-1}\lambda_\square \|u-v\|^{-\alpha_\square} E_{u,v}$ amount of time to spread from $v$ to $u$, where $(E_{u,v})_{u\neq v\in \T_n^d}$ is a family of i.i.d.\ rate-one exponential random variables.  We expect similar results as in Theorems\ref{thrm:coex} and~\ref{thrm:nocoex} to hold when the random variables $E_{u,v}$ are raised to some power $\gamma>0$. The coupling between the infection processes and branching random walks that is used in this work would become more involved, however, along the lines of the coupling used in~\cite{BhaHof12}. In this situation, following~\cite[Remark 1.9]{SC_PD_LRFPP}, one would have to assume $\sup_{n\in\N}\alpha_\ominus(n),\sup_{n\in\N}\alpha_\oplus(n)<d/\gamma$ (instead being smaller than $d$, as in Assumption~\ref{ass:par}). We leave this as an open problem:
\begin{problem}
	Prove results similar to Theorems~\ref{thrm:coex} and~\ref{thrm:nocoex} when the time for the $\square $ infection to spread from $v$ to an uninfected vertex $u$ equals $(R_n^\ominus)^{-1}\lambda_\square \|u-v\|^{-\alpha_\square} E_{u,v}^\gamma$, for some $\gamma>0$, where the $(E_{u,v})_{u\neq v \in \T_n^d}$ is an i.i.d.\ collection of rate-one exponential random variables. In particular, do the phase transitions of the scale $c_n$ stay in the same order, i.e.\ orders of $(\log{n})^2, \log{n}$, and constant? Do analogues of Conjecture~\ref{conj:sym_asym} or Problem~\ref{prob:sym_asym} hold?
\end{problem}

\textbf{Competition on the long-range percolation cluster.} \ \ \ 
\noindent Consider the \emph{long-range percolation} graph constructed on the torus $\T_n^d$ as follows. We include all the nearest-neighbour edges. Further, for pairs $x,y \in \T_n^d$ that are non-nearest neighbours, we include the edge $\{x,y\}$ independently of all other edges with probability $ \|x-y\|^{-\alpha}$, for some fixed $\alpha \in (0,d)$ and where $\|\cdot\|$ is the torus norm. Note that this graph is always connected as it contains $\T_n^d$ as a subgraph. Consider a standard competing FPP process on this graph started from two uniformly chosen vertices $v^{\ominus}$ and $v^{\oplus}$, and let us call these the $\ominus$ and $\oplus$ infections, respectively, as before, where the $\ominus$ infection spreads at rate $1$ and the $\oplus$ at rate $\lambda=\lambda(n)$ along edges. The model studied here can be viewed as a mean-field version of this process, so that we  expect similar results to Theorems~\ref{thrm:coex} and~\ref{thrm:nocoex} and to hold for the sizes $N_n^{\ominus}$ and $N_n^{\oplus}$ of the two infections when all the vertices have been infected.

\begin{conjecture}
	Let $\lambda>0$, and consider the competing FPP on the long-range percolation cluster with $\alpha<d$ as described above. Then,
	\begin{itemize}
		\item If $\lambda=1$, then coexistence occurs. 
		\item If $\lambda\neq 1$, then $\log(M_n)/\log n\toinp \min\{\lambda,1/\lambda\}$.
	\end{itemize}
\end{conjecture}

\noindent More interesting is the case $\lambda(n)=1+\frac{c_n}{\log{n}}$ for some sequence $c_n$. In particular, it would be interesting to see whether for this case one obtains asymptotic sizes similar to Theorems~\ref{thrm:coex} and~\ref{thrm:nocoex}.

\begin{problem}
	Consider the competition model on the long-range percolation cluster as described above, where the $\ominus$ and $\oplus$ infections spread along edges at rate $1$ and $\lambda(n)=1+c_n/\log{n}$, respectively. Is the asymptotic behaviour of $M_n$ similar to 
	that in Theorems~\ref{thrm:coex} and~\ref{thrm:nocoex}, when $c_n$ behaves in the different ways as described in those theorems? Further, do analogues of Conjecture~\ref{conj:sym_asym} (with possibly different limits) or Problem~\ref{prob:sym_asym} hold?  
\end{problem}

\textbf{Organisation of the paper.}\ \ \ 
The remainder of the paper is organized as follows. In Section~\ref{sec:prelims} we state and prove several preliminary results. In Section~\ref{sec:coupling}, we a coupling of the infection growths to branching random walks. In particular, Section~\ref{sec:defect} contains the main technical contribution, formalizing that the branching random walk coupling can be controlled up to a certain time. In Section~\ref{sec:ub} we prove the upper bounds on the size of the $\ominus$ infection that are claimed in Theorem~\ref{thrm:nocoex}. In particular, these upper bounds imply when absence of coexistence occurs. We prove matching lower bounds on the size of the $\ominus$ infection and complete the proofs of the main theorems in Section~\ref{sec:nocoex}.

\section{Preliminary results}\label{sec:prelims}

In this section we collect a number of minor technical preliminary results that we need throughout the paper. They consist of estimating the quantities $R_n^\ominus$ and $R_n^\oplus$, as in~\eqref{eq:Rndef}, as well as some basic well-known results on the growth rate of branching processes. 

\subsection{Rate estimation}

Recall the quantities $R_n^\ominus$ and $R_n^\oplus$ from~\eqref{eq:Rndef} (with $\lambda_\oplus=\lambda(n)$, see Remark~\ref{rem:wlog}). In this section we provide quantitative bounds and asymptotic results for these quantities. To this end, we define, with $\alpha=\alpha(n)\in[0,d)$,
\be\label{eq:Rn}
R_n(\alpha):=\sum_{\substack{v\in\T_n^d\\ v\neq \mathbf{0}}}\|v\|^{-\alpha}.
\ee 
For values of $n$ such that the radius of the torus $n^{1/d}$ is not an integer, $R_n(\alpha)$ is not well-defined. More generally, set $M(n):=\inf\{m\in \N: m^d\geq n\}$ and  let $v_1,\ldots, v_M$ be the vertices in $\T_M^d$ in increasing order with respect to their distance to the origin (using the norm $\|\cdot\|$). We then set 
\be 
R_n(\alpha):=\sum_{i=1}^n \|v_i\|^{-\alpha}.
\ee 
It follows from~\cite[Appendix]{HofLod23} that, by switching from summation to integration and applying appropriate variable substitutions, if $\alpha$ is fixed and independent of $n$, then $R_n=(R+o(1))n^{1-\alpha/d}$ as $n\to\infty$, where 
\be \label{eq:R}
R=R(\alpha,d,p)=2^\alpha \int_{[0,1]^d} \|x\|^{-\alpha}\,\dd x=2^d \int_{[0,\frac12]^d}\|x\|^{-\alpha}\,\dd x.
\ee 
{Here, $R$ depends on $\alpha, d$, and the $\ell^p$-norm that is used, for $p\in[1,\infty)\cup\{\infty\}$. As we are only interested in the dependence on $\alpha$ in what follows, we omit the arguments $d$ and $p$.} The final representation on the right-hand side follows from a variable substitution. This representation, however, allows us to directly conclude that $R(\alpha)$ is increasing in $\alpha$, regardless of which $\ell^p$-norm is used. It is also readily verified that the asymptotic result $R_n=(R+o(1))n^{1-\alpha/d}$ also holds when $\alpha=\alpha(n)\in[0,d)$ depends on $n$, as long as $\sup_{n\in\N}\alpha(n)<d$ is satisfied. Moreover, it holds along any diverging sub-sequence $(j(n))_{n\in\N}$, in the sense that
\be \label{eq:Ralphan}
R_{j(n)}(\alpha(n))=(R(\alpha(n))+o(1))j(n)^{1-\alpha(n)/d}, \qquad\text{as }n\to\infty,
\ee 
where $R(\alpha(n))$ as in~\eqref{eq:R} now also depends on $n$. We have the following useful bounds for $R_j(\alpha(n))$ for all $j\in\N$, as well as bounds for the sum of the rates between disjoint sets  $A,B\subset \T_n^d$, based on $R_n$:

\begin{lemma}\label{lemma:ratesupinf}
	Let $\alpha=\alpha(n)\in[0,d)$ be such that $\sup_{n\in\N}\alpha(n)<d$. There exist constants $0<\underline c<\overline c<\infty$, such that, uniformly over $j\in\N$ and $n\in\N$, 
	\be
	\underline c\leq \frac{R_j(\alpha(n))}{j^{1-\alpha(n)/d}}\leq \overline c.
	\ee 
\end{lemma}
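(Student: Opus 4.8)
The plan is to bound the sum $R_j(\alpha(n)) = \sum_{v \in \T_j^d, v \neq \mathbf{0}} \|v\|^{-\alpha(n)}$ from above and below by comparing it to an integral, uniformly in $j$ and $n$. Write $\bar\alpha := \sup_{n} \alpha(n)$, which by hypothesis satisfies $\bar\alpha < d$, and note $\alpha(n) \in [0, \bar\alpha]$ for every $n$. Since the function $x \mapsto \|x\|^{-\alpha}$ is radially decreasing, for each lattice point $v \neq \mathbf{0}$ one can sandwich $\|v\|^{-\alpha}$ between the integrals of $\|x\|^{-\alpha}$ over unit cubes adjacent to $v$; summing over $v \in \T_j^d \setminus \{\mathbf{0}\}$ and using periodicity of the torus norm, this gives $c_1 \int_{B} \|x\|^{-\alpha} \, \dd x \le R_j(\alpha) \le c_2 \big( 1 + \int_{B'} \|x\|^{-\alpha}\,\dd x \big)$ for suitable boxes $B, B'$ of side comparable to $j^{1/d}$ centered at the origin, with $c_1, c_2$ depending only on $d$. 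The additive constant on the upper side accounts for the singularity at the origin, where the cube comparison fails; it is harmless after division by $j^{1-\alpha/d} \ge 1$.

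Next I would evaluate the integral $\int_{[-Kj^{1/d}/2, Kj^{1/d}/2]^d} \|x\|^{-\alpha}\,\dd x$ by the substitution $x = j^{1/d} y$, which scales it to $j^{1-\alpha/d} \int_{[-K/2,K/2]^d} \|y\|^{-\alpha}\,\dd y$. The remaining integral $\int_{[-K/2,K/2]^d} \|y\|^{-\alpha}\,\dd y$ is finite precisely because $\alpha < d$ makes the singularity at the origin integrable, and — this is the crucial uniformity point — it is bounded above and below by positive constants \emph{uniformly over $\alpha \in [0,\bar\alpha]$}. The upper bound follows since $\|y\|^{-\alpha} \le \|y\|^{-\bar\alpha} + 1$ pointwise (splitting according to whether $\|y\| \le 1$ or not, after fixing a reference $\ell^p$-norm and using norm equivalence), and $\int \|y\|^{-\bar\alpha}\,\dd y < \infty$; the lower bound follows since $\|y\|^{-\alpha} \ge \|y\|^{-\alpha}\ind{\|y\|\le 1} \ge \min(1, c_p^{\bar\alpha}) > 0$ on a set of positive measure. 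Combining with the cube-comparison bounds yields $\underline c \le R_j(\alpha(n)) / j^{1-\alpha(n)/d} \le \overline c$ with $\underline c, \overline c$ depending only on $d$, $p$, and $\bar\alpha$ — in particular independent of both $j$ and $n$.

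The main obstacle, and the reason the hypothesis $\sup_n \alpha(n) < d$ is essential, is controlling the integral $\int \|y\|^{-\alpha}\,\dd y$ near the origin uniformly in $\alpha$: as $\alpha \uparrow d$ this integral blows up, so without a uniform upper bound $\bar\alpha < d$ the constant $\overline c$ would not exist. A secondary technical point is handling the finitely-many smallest lattice vectors (those of norm $O(1)$), for which the integral comparison per unit cube is not valid; these contribute at most a constant number of terms each bounded by $1^{-\alpha}=1$ (for the unit vectors) or a fixed constant depending on $p$, which is absorbed into $\overline c$ since we divide by $j^{1-\alpha(n)/d} \ge 1$ and need only a bound that holds uniformly, not an asymptotic equivalence. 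For the lower bound one simply notes $R_j(\alpha) \ge 1$ trivially handles small $j$, while for large $j$ the integral comparison already delivers a term of order $j^{1-\alpha/d}$. Everything else is routine.
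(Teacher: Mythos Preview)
Your argument is correct and self-contained. The paper's proof takes a slightly different route: it invokes the previously established asymptotic $R_j(\alpha(n))=(R(\alpha(n))+o(1))j^{1-\alpha(n)/d}$ as $j\to\infty$ (cited from the appendix of~\cite{HofLod23}), and then handles the uniformity in $n$ by observing that $R(\alpha)=2^d\int_{[0,1/2]^d}\|x\|^{-\alpha}\,\dd x$ is \emph{monotone increasing} in $\alpha$, whence $1=R(0)\le R(\alpha(n))\le R(\overline\alpha)<\infty$. The finitely many small $j$ are absorbed by adjusting $\underline c,\overline c$. In contrast, you carry out the sum--integral comparison directly and obtain uniformity in $\alpha$ via the pointwise bound $\|y\|^{-\alpha}\le \|y\|^{-\overline\alpha}+1$. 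Your approach is more elementary and does not rely on the asymptotic equivalence being already available; the paper's is shorter because the analytic work is outsourced, and the monotonicity observation gives a clean reason why $\overline\alpha<d$ suffices. Either way the content is the same scaling computation.
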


\begin{proof}
	Set $\overline \alpha:=\sup_{n\in\N}\alpha(n)$. It follows from~\eqref{eq:Ralphan} that $R_j(\alpha(n))=(R(\alpha(n))+o(1))j^{1-\alpha(n)/d}$ as $j\to \infty $, where $j$ can either be independent of $n$ and tend to infinity, or $j$ can depend on $n$ such that $n\to\infty$ and $j(n)\to \infty$. Then, since $R$ is increasing in $\alpha$, we bound $1=R(0)\leq R(\alpha(n))\leq R(\overline\alpha)<\infty$, uniformly over the sequence $(\alpha(n))_{n\in\N}$. As a result, taking $\underline c$ sufficiently small and $\overline c$ sufficiently large, we obtain the desired result. 
\end{proof}

\begin{lemma}\label{lemma:rateest}
	Let $\alpha\in[0,d)$, fix $A,B\subset \T_n^d$ such that $A\cap B=\emptyset$, and write $A^c:=\T_n^d\setminus A$ for the complement of $A$. Then,
	\begin{align}
		\sum_{v\in A}\sum_{u\in B} \|u-v\|^{-\alpha}&\leq \min\{|A|R_{|B|}(\alpha),|B|R_{|A|}(\alpha)\}, \\
		\intertext{and}
		\sum_{v\in A}\sum_{u\in B}\|u-v\|^{-\alpha}&\geq \max\{|A|(R_n(\alpha)-R_{|B^c|}(\alpha)),|B|(R_n(\alpha)-R_{|A^c|}(\alpha))\}.
	\end{align}
	In particular, for $B=A^c$,
	\begin{align}
		\sum_{v\in A}\sum_{u\in A^c} \|u-v\|^{-\alpha}&\leq \min\{|A|R_{|A^c|}(\alpha),|A^c|R_{|A|}(\alpha)\}\leq \min\{|A|,|A|^c\}R_n(\alpha), \\
		\intertext{and}
		\sum_{v\in A}\sum_{u\in A^c}\|u-v\|^{-\alpha}&\geq \max\{|A|(R_n(\alpha)-R_{|A|}(\alpha)),|A^c|(R_n(\alpha)-R_{|A^c|}(\alpha))\}.
	\end{align}
\end{lemma}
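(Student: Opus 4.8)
The plan is to prove the two displayed inequalities for general disjoint $A, B$ first, and then specialise to $B = A^c$. For the upper bound $\sum_{v\in A}\sum_{u\in B}\|u-v\|^{-\alpha}\leq \min\{|A|R_{|B|}(\alpha),|B|R_{|A|}(\alpha)\}$, I would fix $v\in A$ and bound the inner sum $\sum_{u\in B}\|u-v\|^{-\alpha}$. Since $B$ has $|B|$ elements, all distinct from $v$ (as $v\in A$ and $A\cap B=\emptyset$), the quantities $\{\|u-v\| : u\in B\}$ are $|B|$ nonzero torus distances. By the symmetry/translation-invariance of the torus, translating by $-v$ maps $B$ to a set $B-v$ of $|B|$ nonzero points of $\T_n^d$. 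The key observation is that for any set $S\subseteq \T_n^d\setminus\{\mathbf 0\}$ of size $m$, we have $\sum_{w\in S}\|w\|^{-\alpha}\leq R_m(\alpha)$, because $R_m(\alpha) = \sum_{w \in \T_m^d, w\neq 0}\|w\|^{-\alpha}$ is, up to the shape of the box, the sum of the $m-1$... — wait, here one must be slightly careful: $R_m(\alpha)$ is defined as a sum over $\T_m^d\setminus\{\mathbf 0\}$, which has $m-1$ nonzero terms, not $m$. So the cleanest route is: the $m$ largest values of $\|w\|^{-\alpha}$ over $w\in\T_n^d\setminus\{\mathbf 0\}$ are attained at the $m$ points closest to the origin, and these are (essentially, up to ties and boundary shape) the nonzero points of a sub-box; summing the top $m$ such values is at most $R_{m+1}(\alpha)$ or, with the indexing used in the paper, bounded by $R_{|B|}(\alpha)$ after accounting for the off-by-one. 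I would check the exact convention in~\eqref{eq:Rn} and phrase the monotone-rearrangement step accordingly; summing over $v\in A$ then gives the factor $|A|$. Swapping the roles of $A$ and $B$ gives the other term in the minimum.

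For the lower bound $\sum_{v\in A}\sum_{u\in B}\|u-v\|^{-\alpha}\geq |A|(R_n(\alpha)-R_{|B^c|}(\alpha))$, I would again fix $v\in A$ and write $\sum_{u\in B}\|u-v\|^{-\alpha} = \sum_{u\in\T_n^d, u\neq v}\|u-v\|^{-\alpha} - \sum_{u\in B^c\setminus\{v\}}\|u-v\|^{-\alpha} = R_n(\alpha) - \sum_{u\in B^c\setminus\{v\}}\|u-v\|^{-\alpha}$, using that $\sum_{u\in\T_n^d, u\neq v}\|u-v\|^{-\alpha} = R_n(\alpha)$ by translation invariance. Since $v\in A\subseteq B^c$, the set $B^c\setminus\{v\}$ has $|B^c|-1$ elements, so by the same monotone-rearrangement principle $\sum_{u\in B^c\setminus\{v\}}\|u-v\|^{-\alpha}\leq R_{|B^c|}(\alpha)$ (the top $|B^c|-1$ values being bounded by the sum over the nearest $|B^c|-1$ nonzero lattice points, which is exactly $R_{|B^c|}(\alpha)$ with the paper's indexing). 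Summing over $v\in A$ yields the bound with the factor $|A|$, and symmetry in $A\leftrightarrow B$ gives the $|B|(R_n(\alpha)-R_{|A^c|}(\alpha))$ term.

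The "in particular" statements are immediate specialisations: setting $B=A^c$ makes $B^c = A$ and $A^c = B$, so $R_{|B^c|}(\alpha) = R_{|A|}(\alpha)$ and $R_{|A^c|}(\alpha) = R_{|A^c|}(\alpha)$, giving the stated lower bound directly; for the upper bound, $\min\{|A|R_{|A^c|}(\alpha), |A^c|R_{|A|}(\alpha)\}$ follows from the general case with $B=A^c$, and the further bound $\min\{|A|,|A^c|\}R_n(\alpha)$ follows because $R_m(\alpha)\leq R_n(\alpha)$ whenever $m\leq n$ (monotonicity of $R_\cdot(\alpha)$ in the box size, as each additional shell only adds nonnegative terms), combined with $\min\{|A|R_{|A^c|}, |A^c|R_{|A|}\} \leq \min\{|A|, |A^c|\}\cdot\max\{R_{|A|}, R_{|A^c|}\} \leq \min\{|A|,|A^c|\}R_n(\alpha)$.

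The main obstacle I anticipate is purely bookkeeping rather than conceptual: pinning down the exact relationship between "the sum of the $m$ largest values of $\|w\|^{-\alpha}$ over nonzero $w\in\T_n^d$" and the quantity $R_{m}(\alpha)$ (which sums $m-1$ nonzero terms over the box $\T_m^d$), given that the closest $m$ points to the origin in $\T_n^d$ need not exactly coincide with $\T_m^d\setminus\{\mathbf 0\}$ for a general $\ell^p$-norm and that there may be ties. The honest resolution is that the nonzero points of $\T_m^d$ are precisely a set of $m-1$ points, and one shows via a rearrangement/domination argument (the values $\|w\|^{-\alpha}$ are a decreasing function of $\|w\|$, and the $m-1$ points of $\T_m^d\setminus\{\mathbf 0\}$ are among the $m-1$ points of $\T_n^d$ of smallest norm, up to reindexing ties which does not change the sum) that any $(|B^c|-1)$-element subset of $\T_n^d\setminus\{\mathbf 0\}$ has $\|\cdot\|^{-\alpha}$-sum at most $R_{|B^c|}(\alpha)$. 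I would state this as a short standalone sub-claim and prove it by the rearrangement inequality, since it is used in both bounds; everything else is then a one-line application of translation invariance on the torus.
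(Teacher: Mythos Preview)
Your approach is essentially identical to the paper's: for the upper bound, fix $v\in A$, observe that the sum over $u\in B$ is at most the sum over the $|B|$ closest nonzero lattice points to $v$, and identify this with $R_{|B|}(\alpha)$; for the lower bound, fix $v\in A$, write $\sum_{u\in B}\|u-v\|^{-\alpha}=R_n(\alpha)-\sum_{u\in B^c,\,u\neq v}\|u-v\|^{-\alpha}$ and bound the subtracted sum above by $R_{|B^c|}(\alpha)$; symmetry gives the other half of each min/max.

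The off-by-one and box-shape concerns you raise are legitimate, but the paper's proof does not address them either: it simply asserts that the sum over the $|B|$ closest vertices equals $R_{|B|}(\alpha)$, without reconciling the fact that $\T_{|B|}^d\setminus\{\mathbf 0\}$ has $|B|-1$ elements or that the closest $m$ points in $\T_n^d$ need not coincide with $\T_m^d\setminus\{\mathbf 0\}$ for a general $\ell^p$-norm. In practice this imprecision is harmless, since every application of the lemma in the paper feeds into the multiplicative bounds of Lemma~\ref{lemma:ratesupinf}, which absorb constant-factor (and hence a fortiori off-by-one) discrepancies. So your proposal is correct and matches the paper; you are simply being more scrupulous about a point the authors elide.
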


\begin{proof}
	For the upper bound we observe that for each $v\in A$, the sum of the inverse distances from $v$ to all vertices $u\in B$ is always smaller than the sum of the inverted distances of the $|B|$ many closest vertices to $v$. Hence, 
	\be
	\sum_{v\in A}\sum_{u\in B}\|u-v\|^{-\alpha}\leq \sum_{v\in A}R_{|B|}(\alpha)=|A|R_{|B|}(\alpha). 
	\ee 
	By symmetry the upper bound $|B|R_{|A|}(\alpha)$ also holds. 
	The final upper bound (with $B=A^c$) directly follows since $R_k(\alpha)$ is increasing in $k$. 
	
	For the lower bound we observe that for each $v\in A$,
	\be 
	\sum_{u\in B}\|u-v\|^{-\alpha}=R_n(\alpha)-\sum_{\substack{u\in B^c\\ u\neq v}}\|u-v\|^{-\alpha}\geq R_n(\alpha)-R_{|B^c|}(\alpha),  
	\ee 
	where we again use that the sum over $u\in B^c, u\neq v$, is at most the sum of the inverted distances of the $|B^c|$ many closest vertices to $v$. Since the lower bound does not depend on $v$, we obtain the first argument of the maximum. Symmetry again concludes the proof.
\end{proof}

\subsection{Elementary results on branching processes}

For $\sigma>0$ we define a  rate-$\sigma$ \emph{Yule process} $(\BP(t))_{t\geq 0}$ as a branching process in continuous time, started from a root $\varnothing$ at time $t=0$. Each individual born into the process $($including $\varnothing)$ has an associated i.i.d.\ exponential clock of rate $\sigma$. Every time the clock of an individual rings, it gives birth to a child $($receiving its own associated exponential clock$)$ and resets its clock. We refer to the start of Section~\ref{sec:coupling} for a more formal definition, which we omit for now and focus rather on the following result.

\begin{lemma} \label{lem:size_CTBP}
	Fix $\sigma>0$ and consider a rate-$\sigma $ Yule process $(\BP(t))_{t \geq 0}$.
	\begin{enumerate}[label=\arabic*.]
		\item For any $t\geq 0$, it holds that $|\BP(t)|\sim\mathrm{Geo}(\e^{-\sigma t})$ (see~\eqref{eq:geo} for the distribution's definition) and that $|\BP(t)|\e^{-t}\toas E$, with $E$ a rate one exponential random variable.
		\item  We have \begin{align*}
			\lim_{A \to \infty} \Prob{\{\sup_{t > 0}|\BP(t)|\e^{-\sigma t}\leq A\}\cap\{\inf_{t>0}|\BP(t)|\e^{-\sigma t}\geq A^{-1}\}} = 1.
		\end{align*} 
	\end{enumerate}
	
\end{lemma}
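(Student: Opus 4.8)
\textbf{Proof plan for Lemma \ref{lem:size_CTBP}.}

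For part 1, the plan is to use the standard linear birth-process (Yule) structure directly. Track the embedded jump chain: when the process has $k$ individuals, the total birth rate is $k\lambda$, so the holding time in state $k$ is exponential with rate $k\lambda$, and these holding times are independent across $k$. Hence $|\BP(t)|\geq k$ iff the sum of the first $k-1$ holding times is at most $t$. A direct way to finish is to recall the classical fact (Kendall) that for a rate-$\lambda$ Yule process started from one individual, $\Prob{|\BP(t)|=k}=\e^{-\lambda t}(1-\e^{-\lambda t})^{k-1}$ for $k\in\N$, which is exactly $\mathrm{Geo}(\e^{-\lambda t})$ in the convention of \eqref{eq:geo}. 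If one prefers a self-contained derivation, set $p_k(t):=\Prob{|\BP(t)|=k}$, write the Kolmogorov forward equations $p_k'(t)=\lambda(k-1)p_{k-1}(t)-\lambda k p_k(t)$ with $p_1(0)=1$, and verify by substitution that $p_k(t)=\e^{-\lambda t}(1-\e^{-\lambda t})^{k-1}$ solves them; uniqueness of the solution gives the claim.

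For part 2, the plan is to invoke the almost sure martingale convergence for the Yule process. Let $W_t:=|\BP(t)|\e^{-\lambda t}$. Since $\Exp{|\BP(t)|}=\e^{\lambda t}$ and $|\BP(t)|$ is a (continuous-time) martingale after this normalization, $W_t$ is a nonnegative martingale, hence converges almost surely to a limit $W_\infty$ as $t\to\infty$. From part 1, $W_t$ is $\e^{-\lambda t}$ times a $\mathrm{Geo}(\e^{-\lambda t})$ variable, which converges in distribution to an $\mathrm{Exp}(1)$ law; by uniqueness of limits, $W_\infty\sim\mathrm{Exp}(1)$, so in particular $0<W_\infty<\infty$ almost surely. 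Since $t\mapsto W_t$ has right-continuous paths with left limits and is bounded away from $0$ and $\infty$ on any compact time interval (it only jumps by the discrete factor $\e^{0}$ times integer increments, with finitely many jumps on $[0,T]$), the random variables $\sup_{t>0}W_t$ and $\inf_{t>0}W_t$ are almost surely finite and strictly positive, respectively. Therefore $\Prob{\sup_{t>0}W_t\leq A,\ \inf_{t>0}W_t\geq A^{-1}}\to 1$ as $A\to\infty$ by continuity of measure.

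The main obstacle is the measure-theoretic care needed in part 2: one must argue that the supremum and infimum over the \emph{uncountable} time set $t>0$ are finite and positive almost surely, not merely that $W_t$ is tight for each fixed $t$. I would handle the supremum via Doob's maximal inequality applied to the martingale $W_t$ (giving $\Prob{\sup_{t>0}W_t> A}\leq \Exp{W_\infty}/A=1/A$), and the infimum via the almost sure convergence $W_t\to W_\infty>0$ together with the fact that on $[0,T]$ the path is piecewise constant with only finitely many (hence a positive minimum value) pieces, choosing $T$ large enough that $W_t$ stays close to $W_\infty$ for $t\geq T$. Everything else is routine.
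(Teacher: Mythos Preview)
Your proposal is correct and follows essentially the same skeleton as the paper: both arguments defer part~1 to standard references, and for part~2 both rely on the almost sure convergence $W_t:=|\BP(t)|\e^{-\lambda t}\to W_\infty\sim\mathrm{Exp}(1)$, handling small and large times separately.

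There are two minor differences worth noting. For the supremum, the paper splits at $t=\log(A)/2$, bounds $\e^{-t}\leq 1$ on the early interval, and applies Markov's inequality to $|\BP(\log(A)/2)|$; your use of Doob's maximal inequality applied directly to the nonnegative martingale $W_t$ is cleaner and gives the sharper bound $\Prob{\sup_{t>0}W_t>A}\leq 1/A$ in one line. For the infimum, the paper uses the explicit deterministic bound $W_t\geq \e^{-\lambda t}>A^{-1}$ for $t<\lambda^{-1}\log A$, which is more direct than your ``finitely many jumps on $[0,T]$'' argument. Incidentally, your description of $W_t$ as ``piecewise constant'' is inaccurate: $|\BP(t)|$ is piecewise constant, but $W_t$ decays exponentially between jumps. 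This does not affect the argument, since the relevant point is simply that $W_t\geq \e^{-\lambda t}>0$ on any compact interval.
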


\noindent We do not prove Lemma~\ref{lem:size_CTBP} (1); it is quite standard and we refer the reader to~\cite[Section 2.5]{Nor98} for a proof (see also~\cite[Lemma $5.2$]{BanBha22} for a summary of the necessary results in Section $2.5$ of~\cite{Nor98}).

\begin{proof}[Proof of Lemma~\ref{lem:size_CTBP} (2)]	
	Since a Yule process of rate $\sigma$, evaluated at time $t$, is equal in distribution to a Yule process of rate $1$ evaluated at time $\sigma t$, we can without of generality set $\sigma=1$ in the proof. We have from $(1)$ that 
	\be \label{eq:bpas}
	|\BP(t)|\e^{-t}\toas E,
	\ee 
	and that $\E{|\BP(t)|}=\e^t$ for any $t\geq 0$. We use both results in the following. First, we can bound 
	\be 
	\P{\sup_{t\geq 0}|\BP(t)|\e^{-t}> A}\leq \P{\sup_{t\leq \log(A)/2}\!\!\!\!\!|\BP(t)|\e^{-t}>A}+\P{\sup_{t\geq \log(A)/2}\!\!\!\!\!|\BP(t)|\e^{-t}>A}. 
	\ee 
	For the first probability, we bound the exponential term from above by $1$ and observe that $|\BP(t)|$ is monotone increasing in $t$. As a result, using Markov's inequality yields 
	\be 
	\P{\sup_{t\leq \log(A)/2}|\BP(t)|\e^{-t}>A}\leq A^{-1}\E{|\BP(\log(A)/2)|}=A^{-1/2}. 
	\ee 
	For the second probability, we use the almost sure convergence in~\eqref{eq:bpas} to obtain the upper bound
	\be \ba 
	\mathbb P\Bigg({}&\Big|\sup_{t\geq \log(A)/2}\!\!\!\!\!|\BP(t)|\e^{-t}-E\Big|>1\Bigg)+\P{E>A-1}\\ 
	&=\P{\Big|\sup_{t\geq \log(A)/2}\!\!\!\!\!|\BP(t)|\e^{-t}-E\Big|>1}+\e^{1-A}.
	\ea \ee 
	The first term vanishes with $A$ by~\eqref{eq:bpas}. Thus, for any $\eps>0$ we can choose $A$ sufficiently large such that the probability on the right-hand side is at most $\eps/6$, as well as $\e^{1-A}<\eps/6$ and $A^{-1/2}<\eps/6$, to arrive at the desired bound
	\be 
	\P{\sup_{t\geq 0}|\BP(t)|\e^{-t}> A}<\eps/2.
	\ee 
	For the infimum, we first observe that $|\BP(t)|\e^{-t}\geq \e^{-t}>A^{-1}$ for any $t<\log A$. Hence, 
	\be 
	\P{\inf_{t\geq 0}|\BP(t)|\e^{-t}<A^{-1}}=\P{\inf_{t\geq \log A}|\BP(t)|\e^{-t}<A^{-1}}.
	\ee 
	Similar calculations as for the supremum yield that for any $\eps>0$, we can choose $A=A(\eps)$ sufficiently large such that the right-hand side is at most $\eps/2$. Combined with the bound for the supremum and since $\eps$ is arbitrary, we obtain the desired result.
\end{proof}

\section{Coupling the LRC process to branching random walks}\label{sec:coupling}

In this section we introduce and analyse a coupling between the LRC process and two independent continuous-time branching random walk (CTBRW) processes on the torus $\T_n^d$. To define the branching random walk, we use the perspective of random walks indexed by a branching process. Let
\be 
\cU_\infty:=\{\varnothing\}\cup\bigcup_{n\in\N}\mathbb N^n
\ee 
denote the Ulam-Harris tree. Here, $\varnothing$ denotes the root, and an individual $u_1\cdots u_k$ for some $k\in\N$ and $u_1,\ldots, u_k\in\N$ is recursively viewed as the $u_k^{\mathrm{th}}$ child of $u_1\cdots u_{k-1}$ (for $k=1$ we use the convention that $u_1\cdots u_{k-1}=\varnothing$). In particular, $u_1$ is the $u_1^{\mathrm{th}}$ child of $\varnothing$. For $u=u_1\ldots u_k\in\cU_\infty\setminus\{\varnothing\}$ for some $k\in\N$ and $u_1,\ldots u_k$, we let $a(u):=u_1\ldots u_{k-1}$ denote the parent (or ancestor) of $u$. 

We define a CTBRW $(\cX(t))_{t\geq 0}$ on the state space $\cU_\infty \times \T_n^d$. For $\sigma>0$, we first define a Yule process $(\BP_\sigma(t))_{t\geq 0}$ as a random subtree of $\cU_\infty$ as follows. For $u\in\cU_\infty$, let $(E_i^u)_{i\in\N}$ be i.i.d.\ exponential random variables with rate $\sigma$, which are also mutually independent among the $u\in\cU_\infty$. We initialise $\BP_\sigma(0)$ as $\{\varnothing\}$ and for $t\geq 0$, we set 
\be 
\BP_\sigma(t):=\bigg\{u_1\cdots u_k\in\cU_\infty\bigg| \sum_{j=1}^k \sum_{i=1}^{u_j}E^{u_1\cdots u_{j-1}}_i\leq t\bigg\}.
\ee 
Then, let $p_n=(p_n(v))_{v\in \T_n^d\setminus \{\mathbf 0\}}$ be a probability distribution on $\T_n^d\setminus\{\mathbf 0\}$, called the \emph{displacement distribution}, and let $(X_{a(u),u})_{u\in \cU_\infty\setminus\{\varnothing\}}$ be a family of i.i.d.\ random variables with distribution $p_n$. For $t\geq 0$ and $u\in\BP(t)$ we assign a location $x_u\in\T_n^d$ to the individual $u$. For a given $v\in \T_n^d$, we set $x_\varnothing:=v$ and 
\be
x_u:=x_{a(u)}+X_{a(u),u},\qquad \text{for }u\in \BP_\sigma(t)\setminus\{\varnothing\}.
\ee 
We can then define the CTBRW $(\cX_n(t))_{t\geq 0}$ as 
\be
\cX_n(t):=\{(u,x_u): u\in \BP_\sigma(t)\}.
\ee  
Intuitively, we think of the CTBRW $\cX_n(t)$ as particles on $\T_n^d$ that independently produce children at rate $\sigma$, who then independently jump to a different location according to the displacement distribution $p_n$ (translated by the position of their parent).

\subsection{Coupling definition} \label{sec:coupl} Recall from Section \ref{sec:model_def} the definition of the LRC process as a Markov process $(\cN_n^\ominus(t),\cN_n^\oplus(t))_{t \geq 0}$ on the state space $\Omega:=\{(A,B):A,B \subset \T_n^d,A\cap B=\varnothing\}$, with initial state $(\{v^\ominus\},\{v^\oplus\})$ for two distinct vertices $v^\ominus,v^\oplus \in\T_n^d$. Consider two independent CTBRWs $(\cX_n^\square(t))_{t \geq 0}$ with an initial particle at $v^\square$, for $\square \in \{\ominus,\oplus\}$. We use the notation introduced above, with superscripts $\ominus $ and $\oplus$ to distinguish between the two CTBRWs and we say that particles in $\cX_n^\square(t)$ are type $\square$ particles. Here, the birth rate of the Yule process $\BP^\ominus$ is $1$ (i.e.\ $\sigma=1$) and the birth rate of $\BP^\oplus$ is $Z_n$ (i.e.\ $\sigma=Z_n$). The displacement distributions $p_n^\ominus$ and $p_n^\oplus$ are 
\be 
p^\square_n(v):= \frac{1}{R_n^\square} \lambda_\square\| v\|^{-\alpha_\square}\qquad \text{for }\square \in \{\ominus,\oplus\}\text{ and } v\in \T_n^d\setminus\{\mathbf 0\},
\ee 
where we recall that $\lambda_\ominus=1$ and $\lambda_\oplus=\lambda(n)$ due to Remark~\ref{rem:wlog}. In this section we provide a coupling between the LRC process and the two CTBRWs $\cX_n^\ominus$ and $\cX_n^\oplus$.

\subsubsection{Original and artificial particles}
Consider the two independent branching random walks $(\cX_n^\ominus(t))_{t \geq 0}$ and $(\cX_n^\oplus(t))_{t \geq 0}$. We define two marked branching random walks, where each particle receives a mark $\mathscr A$ or $\mathscr O$. We call such marked particles \emph{artificial} or \emph{original}, respectively. For a particle $u$ in $\cX_n^\square(t)$, we let $m^\square_u\in\{\mathscr A,\mathscr O\}$ denote the mark of $u$. We abuse notation to now define $(\cX_n^\square(t))_{t\geq 0}$ on $\cU_\infty \times \T_n^d\times \{\mathscr A,\mathscr O\}$ as 
\be
\cX_n^\square(t):=\{(u,x_u,m^\square_u): u\in \BP^\square(t)\}\qquad\text{for }\square \in \{\ominus,\oplus\}.
\ee 
The two initial particles of $\cX_n^\ominus$ and $\cX_n^\oplus$, located at $v^\ominus$ and $v^\oplus$ at time $t=0$, are marked $\mathscr O$ (where $v^\ominus\neq v^\oplus$). Suppose that a transition takes place in either $(\cX_n^\ominus(t))_{t\geq 0}$ or $(\cX_n^\oplus(t))_{t\geq0}$ at some time $t>0$, due to a new particle $u$ of type $\square\in\{\ominus,\oplus\}$ being born at $x_u\in \T_n^d$. We mark this new particle as
\begin{itemize}
	\item $m_u^\square=\mathscr A$ if its parent $a(u)$ has mark $\mathscr A$, or if there is at least one particle marked $\mathscr O$ (of type $\ominus$ or $\oplus$) located at $x_u$ at time $t^-$;
	\item $m_u^\square=\mathscr O$ if its parent $a(u)$ is marked $\mathscr O$ and there are no particles marked $\mathscr O$ (of type $\ominus$ nor of type $\oplus$) located at $x_u$ at time $t^-$. 
\end{itemize} 

\noindent We can thus speak of original and artificial particles of type $\square\in\{\ominus,\oplus\}$. 

For any $t \geq 0$ and $\square \in \{\oplus, \ominus\}$, let 
\be \label{eq:Ondef}
\cO_n^{\square}(t):=\{v\in \T_n^d: \exists u \in \cU_\infty \text{ such that } (u,v,\mathscr O)\in \cX_n^\square(t)\}
\ee 
denote the locations of the original particles in $\cX_n^\square(t)$. The key result of this section is the following.
\begin{proposition}[The LRC process equals original particles in marked CTBRWs]\label{prop:coup_LRC_CTBRW}
	Recall the LRC process $(\cN_n^{\ominus}(t),\cN^{\oplus}_n(t))$, as defined in Section~\ref{sec:model_def}, and the sets of original particle locations $(\cO_n^\ominus(t),\cO_n^\oplus(t))_{t\geq 0}$ of the marked CTBRWs, as in~\eqref{eq:Ondef}. We then have $(\cN_n^{\ominus}(t),\cN_n^{\oplus}(t))_{t\geq0}\overset \dd =(\cO_n^{\ominus}(t),\cO_n^{\oplus}(t))_{t\geq 0}$.
\end{proposition}

\begin{proof}
	The statement is clearly true when $t=0$. Assuming the statement is true for some $t\geq 0$, it suffices to show that the distribution of the next transition in both the tuples are same. Observe that, by definition of the processes $\cX^\square(t)$, given $(\cO^\ominus(t),\cO^\oplus(t))$, for $\square \in \{\ominus,\oplus\}$ the rate at which an original particle of type $\square$ located at $v$   gives birth to an original particle of type $\square$ at a location $u$ with no original particles yet, i.e.\ at some $u \in (\cO^\ominus(t)\cup \cO^\oplus(t))^c$, is $(R_n^\ominus)^{-1}\lambda_\ominus\|u-v\|^{-\alpha_\ominus}$ when $\square=\ominus$ and $Z_n \lambda_\oplus\|u-v\|^{-\alpha_\oplus}=(R_n^\ominus)^{-1}\lambda_\oplus\|u-v\|^{-\alpha_\oplus}$ when $\square=\oplus$. This is the same rate at which a vertex $v\in \cN_n^\square(t)$ spreads to a vertex $u\in (\cN^\ominus(t)\cup \cN^\oplus(t))^c$, which yields the desired result.
\end{proof}

We observe that, whilst the two branching random walks $\cX_n^\ominus(t)$ and $\cX_n^\oplus(t)$ are independent, the processes $\cO_n^\ominus(t)$ and $\cO_n^\oplus(t)$ are not. Leveraging the distributional identity between the LRC process and the original particles of the two marked branching random walks, we can control the size of $\cN_n^\ominus(t)$ and $\cN_n^\oplus(t)$ via the independent processes $\cX_n^\ominus(t)$ and $\cX_n^\oplus(t)$, which are more amenable to analysis.

\begin{remark} 
	In the remainder of the paper, we may use the notation $\cN^\square_n(t)$ and $\cO_n^\square(t)$ interchangeably, which in light of Proposition~\ref{prop:coup_LRC_CTBRW} we see as the same objects. Whenever we use the coupling introduced in this section we use $\cO_n^\square$ and when we use other arguments (e.g.\ discrete P\'olya urn type arguments) we use the notation $\cN_n^\square$. In either case, the notation should be clear from context.
\end{remark}

\subsection{Tail bounds on the coupling defect}\label{sec:defect}

We define, for $\square\in\{\ominus,\oplus\}$ and $t\geq0$,
\be\label{eq:Dn}
\cD_n^\square(t):=\{(u,x_u,m^\square_u)\in \cX_n^\square(t): m_u=\mathscr A\},
\ee
as the \emph{coupling defect} at time $t$. That is, $\cD_n^\square(t)$ denotes the set of all artificial particles of type $\square$ born up to time $t$, which do not have a corresponding `birth` in the LRC process. In this sub-section we control the growth of $\cD_n^\square$ over time, which provides quantitative bounds on how well the number of particles born in $(\cX_n^\ominus(t),\cX_n^\oplus(t))$ approximates the sizes of the infections $(\cN_n^\ominus(t),\cN_n^\oplus(t))$, which forms the main technical tool used throughout the proofs of Theorems~\ref{thrm:coex} and~\ref{thrm:nocoex}.

We remark that an analysis of a similar kind was done in \cite{BhaHof12}, up to a point when artificial particles first start appearing -- an event the authors of \cite{BhaHof12} term \emph{collisions} in their paper. At this point in time, the size of the defects are of constant order. We move significantly beyond this, by also controlling the size of the defects when they are of \emph{linear} order (with respect to the size of $\cX^\ominus_n(t)$ and $\cX^\oplus_n(t)$) and show that it is still `sufficiently small' to be of use for our purposes.

Recall that $a(u)$ denotes the parent of a $u\neq \varnothing$ in the Ulam-Harris tree $\cU_\infty$. For $\square\in\{\ominus,\oplus\}$ and $t\geq0$, we define
\be 
\cR^\square_n(t):= \{(u,x_u,\mathscr A)\in \cD^\square(t): m_{a(u)}^\square=\mathscr O\}.
\ee 
By the independence property of sub-trees of branching processes (and thus in particular of Yule processes), we can view the artificial particles born in $\cX_n^\square(t)$ as independent Yule processes rooted at the artificial particles in $\cR_n^\square(t)$, which we call \emph{artificial root} particles of type $\square$. We can use this perspective to characterise the size of $\cD_n^\square(t)$. 

We define a sequence of stopping times $(\tau_k^\square)_{k\in\N,\square\in\{\ominus,\oplus\}}$  as
\be\label{eq:tauk}
\tau^{\square}_k:=\inf\{t>0:|\cR_n^\square(t)|=k\}.
\ee
Furthermore, let $(\BP_j^{\square}(t))_{j\in\N,t\geq 0}$ be a collection independent Yule processes, where the $\BP_j^\square$ are i.i.d.\ copies of $\BP^\square$ (as introduced at the start of Section~\ref{sec:coupl}) with rate $1$ if $\square=\ominus$ and with rate $Z_n$ if $\square=\oplus$. We use the convention that $|\BP_j^{\square}(a)|=0$ for any $j\in\N$ when $a\leq 0$. We can then write
\be\label{eq:C}
|\cD_n^\square(t)|\overset \dd=\sum_{j=1}^{\infty} \ind{\tau_j^{\square}\leq t}|\BP_j^{\square}(b-\tau_j^{\square})|.  
\ee
We use this distributional equality to prove the following bounds on the size of $\cD_n^\square(t)$. 

\begin{proposition}\label{prop:gen_size_artificial_CTBP_tail_bound}
	Fix $\eps>0$ and $\delta \in (0,\inf_{n\in\N}\min\{1-\alpha_\ominus(n)/d,1-\alpha_\oplus{(n)}/d\})$, and recall $Z_n$ from~\eqref{eq:Zn}. The following statements hold. 
	\begin{enumerate}
		\item There exist $M=M(\eps,\delta)\in\N$ and $N\in\N$ such that for all $n\geq N$ and $m\geq M$,
		\be\label{eq:Clin1}
		\Prob{\Big|\cD_n^{\ominus}\Big(\frac{\log{n}-m}{Z_n}\Big)\Big|>n\e^{-m(1+\delta)}}<\varepsilon.
		\ee
		\item There exists $M=M(\eps,\delta)$ such that for all $n\in\N$ and $m\geq M$, 
		\be\label{eq:Clin2}
		\Prob{\Big|\cD_n^{\oplus}\Big(\frac{\log{n}-m}{Z_n}\Big)\Big|>n\e^{-m(1+\delta)}}<\varepsilon.
		\ee
		\item Let $c_n=(Z_n-1)\log n$ be such that $c_n\to \infty$ with $n$ and let $\delta_n$ be such that $\limsup_{n\to\infty}\delta_n<1$ and $\delta_n\log n\to\infty $ with $n$. Then,
		\be \label{eq:Csublin}
		\Big|\cD_n^{\ominus}\Big(0,\frac{(1-\delta_n)\log n}{Z_n}\Big)\Big|\e^{-(1-\delta_n)\log(n)/Z_n}\toinp 0.
		\ee
	\end{enumerate}
\end{proposition}

\noindent To prove Proposition~\ref{prop:gen_size_artificial_CTBP_tail_bound}, we first define the $\sigma$-algebra 
\be \label{eq:F}
\mathcal{F}=\sigma\{(\cO_n^\ominus(s),\cO_n^\oplus(s)):0<s<\infty\},
\ee 
and the stopping times $(T_k)_{k=2}^n$ as 
\be \label{eq:Tkdef}
T_k:=\inf\{t>0: |\cO^\ominus_n(t)\cup \cO^\oplus_n(t)|=k\}\qquad \text{for }k\in\{2,\ldots,n\}.
\ee 
Clearly, $\cD_n^\square$ depends on $\cF$ and $(T_k)_{k=2}^n$, as artificial root particles in the sets $\cR_n^\square(t)$ are born from original particles. Towards proving Proposition~\ref{prop:gen_size_artificial_CTBP_tail_bound}, let us start with an expression for the expected value of $\cD_n^{\square}(t)$. Recall from Remark \ref{rem:wlog}, that we assume without loss of generality $\lambda_\ominus=1$ and $\lambda_\oplus=\lambda=\lambda(n)$.

\begin{lemma}\label{lemma:Cexp}
	Let $\cF$ be the $\sigma$-algebra in~\eqref{eq:F}, define $r^\square:=\ind{\square=\ominus}+\ind{\square=\oplus}\lambda$ and $s^\square:=\ind{\square=\ominus}+\ind{\square=\oplus}Z_n$. Then, for any $t\geq 0$ and $\square\in\{\ominus,\oplus\}$, 
	\be 
	\mathbb E\big[|\cD_n^\square(t)|\,\big|\, \cF\big]=\int_0^t \!\Bigg(\frac{1}{R_n^\ominus}\sum_{u \in \cO_n^\square(z)}\sum_{\substack{v \in \cO_n^\ominus(z)\cup \cO_n^\oplus(z)\\ v\neq u}}\!\!\!\!\!\!\!\!\!\!\!\!\!\!\!\!r^\square\|u-v\|^{-\alpha_\square}\! \Bigg)\e^{s^\square(b-z)}\, \dd z.
	\ee 
\end{lemma}

\begin{proof}
	We recall the sequence $(\tau_j^{\square})_{j\in \N}$ defined in~\eqref{eq:tauk}; the stopping times at which artificial root particles of type $\square$ are born. By conditioning on both $\cF$ and $(\tau_j^{\square})_{j\in\N}$, we can combine~\eqref{eq:C} with Lemma~\ref{lem:size_CTBP} to obtain 
	\be 
	\E{|\cD_n^{\square}(t)|\,\Big|\,\cF,(\tau_j^{\square})_{j\in\N}}=\sum_{j=1}^\infty \ind{\tau_j^{\square
		}\leq t}\e^{s^\square(t-\tau_j^\square)}.
	\ee 
	We now want to take the expected value with respect to the sequence $(\tau_j^{\square})_{j\in\N}$. We first define the function $f^\square_t(z):=\e^{s^\square(t-z)}$, and note that
	\be 
	\sum_{j=1}^\infty \ind{\tau_j^{\square}\leq t}\e^{s^\square(t-\tau_j^{\square})}=\int_0^tf^\square_t(z)\, \eta^{\square}(\dd z), \quad\text{where}\quad \eta^{\square}:=\sum_{j=1}^\infty \delta_{\tau_j^{\square}}, 
	\ee with $\delta$ a Dirac measure. Recall the sequence of stopping times $(T_k)_{k=2}^n$ from~\eqref{eq:Tkdef}, and let $T_{n+1}=\infty$. We observe that the sequence $(T_k)_{k=2}^{n+1}$ is measurable with respect to $\cF$. For any $k\in\{2,\ldots, n\}$, the point process $\eta^\square$, restricted to the interval $[T_k, T_{k+1})$, is distributed as a Poisson point process $\cP^\square_k$, restricted to the interval $[T_k,T_{k+1})$, with constant intensity 
	\be 
	\frac{1}{R_n^{\ominus}}\sum_{u \in \cO_n^{\square}(T_k)}\sum_{\substack{v \in \cO_n^{\ominus}(T_k)\cup \cO_n^{\oplus}(T_k)\\ v\neq u}}r^\square\|u-v\|^{-\alpha_\square}.
	\ee 
	Moreover, as $[0,T_3)=[T_2,T_3),[T_3,T_4),\ldots, [T_n,T_{n+1})=[T_n,\infty)$ partitions $[0,\infty)$, we can write
	\be \ba 
	\E{|\cD_n^\square(t)|\,\Big|\,\cF}&=\E{\int_0^tf^\square_t(z)\, \eta^{\square}(\dd z)\,\bigg|\, \cF}\\ 
	&=\sum_{k=2}^{n+1} \E{\int_{[0,t]\cap [T_k,T_{k+1})} f^\square_t(z)\, \eta^{\square}(\dd z)\,\bigg|\, \cF}\\
	&=\sum_{k=2}^{n+1} \E{\int_{[0,t]\cap [T_k,T_{k+1})} f^\square_t(z)\, \cP^{\square}_k(\dd z)\,\bigg|\, \cF}.\\
	\ea\ee 
	Campbell's formula~\cite[Proposition 2.7]{last_penrose_2017} then yields that this equals
	\be \label{eq:campbell}
	\sum_{k=2}^{n+1} \int_{[0,t]\cap[T_k,T_{k+1})}\Bigg(\frac{1}{R_n^{\ominus}}\sum_{u \in \cO_n^\square(T_k)}\sum_{\substack{v \in \cO_n^\ominus(T_k)\cup \cO_n^\oplus(T_k)\\ v\neq u}}\!\!\!\!\!\!\!\!\!\!\!\!\!\!r^\square\|u-v\|^{-\alpha_\square} \Bigg)f^\square_t(z)\, \dd z.
	\ee 
	Further, if we consider the quantity
	\be 
	\frac{1}{R_n^{\ominus}}\sum_{u \in \cO_n^\square(z)}\sum_{\substack{v \in \cO_n^\ominus(z)\cup \cO_n^\oplus(z)\\ v\neq u}}r^\square\|u-v\|^{-\alpha_\square} 
	\ee 
	as a function of $z$, then, by definition of the stopping times $T_k$, this function is a constant function on the interval $[T_k,T_{k+1})$. As a result, also recalling that $f^\square_t(z):=\e^{s^\square(t-z)}$, we can write~\eqref{eq:campbell} as
	\be \ba 
	\sum_{k=2}^{n+1} {}&\int_{[0,t]\cap[T_k,T_{k+1})}\Bigg(\frac{1}{R_n^{\ominus}}\sum_{u \in \cO_n^{\square}(z)}\sum_{\substack{v \in \cO_n^{\ominus}(z)\cup \cO_n^{\oplus}(z)\\ v\neq u}}\!\!\!\!\!\!\!\!\!\!\!\!\!\!r^\square\|u-v\|^{-\alpha_\square} \Bigg)\e^{s^\square(t-z)}\, \dd z \\
	&=\int_0^t \Bigg(\frac{1}{R_n^{\ominus}}\sum_{u \in \cO_n^{\square}(z)}\sum_{\substack{v \in \cO_n^{\ominus}(z)\cup \cO_n^{\oplus}(z)\\ v\neq u}}\!\!\!\!\!\!\!\!\!\!\!\!\!\!r^\square\|u-v\|^{-\alpha_\square} \Bigg)\e^{s^\square(t-z)}\, \dd z,
	\ea \ee 
	as desired.
\end{proof}

\noindent We are now ready to prove Proposition~\ref{prop:gen_size_artificial_CTBP_tail_bound}.

\begin{proof}[Proof of Proposition~\ref{prop:gen_size_artificial_CTBP_tail_bound}]
	We combine Lemma~\ref{lemma:Cexp} with Lemma~\ref{lemma:rateest}. {First, we apply Lemma~\ref{lemma:rateest}, with $A=\cO_n^{\square}(t)$ and $B=\cO_n^\ominus(z)\cup\cO_n^\oplus(z)$, to bound, for $\square\in\{\ominus, \oplus\}$,
		\be 
		\sum_{u \in \cO_n^{\square}(z)}\sum_{\substack{v \in \cO_n^{\ominus}(z)\cup \cO_n^{\oplus}(z)\\ v\neq u}}\!\!\!\!\!\!\!\!\!\!\!\!\!\!r^\square\|u-v\|^{-\alpha_\square}\leq |\cO_n^{\square}(z)|R^{\square}_{|\cO_n^\ominus(z)|+|\cO_n^\oplus(z)|}.
		\ee 
		We then combine this upper bound with Lemma~\ref{lemma:Cexp} to obtain the upper bound}
	\be 
	\E{|\cD_n^\square(t)|\,\Big|\,\cF}\leq \int_0^t \frac{1}{R_n^{\ominus}}|\cO_n^\square(z)|R^{\square}_{|\cO_n^\ominus(z)|+|\cO_n^\oplus(z)|}\e^{s^\square(t-z)}\, \dd z.
	\ee 
	Next, for $A>0$ we define the event 
	\be\label{eq:Markov_AV_D}
	\cE(A):=\{\sup_{t>0}|\cO_n^\ominus(t)|\e^{-t}<A\}\cap \{\sup_{t>0}|\cO_n^\oplus(t)|\e^{-Z_nt}<A\}.
	\ee   
	First, it follows from Lemma~\ref{lem:size_CTBP} for any $\eps>0$ and all $A=A(\eps)$ sufficiently large, that $\P{\cE(A)}\geq 1-\eps/2$, {since  $|\cO_n^\square(t)|\leq |\BP^{\square}(t)|$ by definition, for $\square\in\{\ominus, \oplus\}$.} Moreover, since $\cE(A)$ is measurable with respect to the $\sigma$-algebra $\cF$, we obtain using Markov's inequality, with $\theta>0$ and $A$ large, 
	\be \ba 
	\P{|\cD_n^\square(t)|\geq \theta}&\leq \frac1\theta \E{\cD_n^\square(t)\indicwo{\cE(A)}}+\P{\cE(A)^c}\\ 
	&\leq \frac1\theta\E{\indicwo{\cE(A)}\int_0^t \frac{1}{R_n^{\ominus}}|\cO_n^\square(z)| R^{\square}_{|\cO_n^\ominus(z)|+|\cO_n^\oplus(z)|}\e^{s^\square(t-z)}\, \dd z}+\eps/2.
	\ea \ee 
	Using the upper bounds in the event $\cE(A)$ and then bounding the indicator from above by one, yields the upper bound
	\be \label{eq:Ccases}
	\P{|\cD_n^\square(t)|\geq \theta}\leq \frac1\theta\int_0^t \frac{A}{R_n^{\ominus}}R^{\square}_{\floor{A(\e^z+\e^{Z_nz})}}\e^{s^\square t}\, \dd z+\eps/2 .
	\ee 
	We can then bound 
	\be \label{eq:Rbounds}
	\underline c^\ominus\leq \frac{R_j^{\ominus}}{j^{1-\alpha_\ominus/d}}\leq \overline c^\ominus, \quad\text{and}\quad \underline c^\oplus\leq \frac{R_j^{\oplus}}{\lambda j^{1-\alpha_\oplus/d}}\leq \overline c^\oplus, 
	\ee 
	for all $j\in \N$ and for constants $0<\underline c^{\square}<\overline c^{\square}<\infty$ and for $\square\in\{\ominus,\oplus\}$, as follows from Lemma~\ref{lemma:ratesupinf}. We then use this in~\eqref{eq:Ccases} to arrive at  
	\be 
	\frac1\theta\int_0^t \frac{A^{2-\alpha_\ominus/d}\overline c^{\square}r^\square}{\underline c^\ominus}n^{-(1-\alpha_{\square}/d)} (\e^z+\e^{Z_nz})^{1-\alpha_{\square}/d}\e^{s^\square t}\, \dd z+\eps/2.
	\ee 
	Since we assume without loss of generality that $Z_n\geq 1$ (see Remark~\ref{rem:wlog}), we can further bound this from above, for some constants  $C_1,C_2=C_1(A),C_2(A)>0$, by 
	\be \ba 
	\frac1\theta\int_0^t C_1 n^{-(1-\alpha_{\square}/d)} \e^{(1-\alpha_{\square}/d)Z_nz+s^\square t}\, \dd z+\eps/2&\leq \frac{C_2}{\theta}Z_n^{-1} n^{-(1-\alpha_\ominus/d)}\e^{(s^\square+(1-\alpha_\square/d)Z_n)t}\\
	&+\eps/2.
	\ea \ee 
	Again using that $Z_n\geq 1$, this leads to
	\be \label{eq:finC1}
	\P{|\cD_n^\square(t)|\geq \theta}\leq\frac{C_2}{\theta} n^{-(1-\alpha_{\square}/d)}\e^{(s^\square+(1-\alpha_{\square}/d)Z_n)t}+\eps/2.
	\ee 
	We  apply this general upper bound to derive the desired results. First, we prove~\eqref{eq:Clin1}, for which we take $\square=\ominus$, and $t=(\log n-m)/Z_n$, and $\theta=n\e^{-m(1+\delta)}$, with $m$ a large constant to be determined and $\delta\in(0,\inf_{n\in\N}\min\{1-\alpha_\ominus/d,1-\alpha_\oplus/d\})$. We also note that $s_\ominus=1$. Applying~\eqref{eq:finC1} yields 
	\be \ba 
	\mathbb P\bigg({}&\Big|\cD_n^\ominus\Big(\frac{\log n-m}{Z_n}\Big)\Big|\geq n\e^{-m(1+\delta)}\bigg)\\ 
	&\leq C_2 \e^{m(1+\delta)} n^{-(2-\alpha_\ominus/d)}\e^{(1+(1-\alpha_\ominus/d)Z_n)(\log n-m)/Z_n} +\eps/2\\
	&= C_2\e^{m(1+\delta)-m(1-\alpha_\ominus/d)} n^{-1} \e^{(\log (n)-m)/Z_n}+\eps/2.
	\ea\ee 
	Since $Z_n\geq 1$ for all $n$ by assumption, we can bound $n^{-1}\exp((\log (n)-m)/Z_n)\leq \e^{-m}$ for all $n\geq \e^m$. We thus obtain the upper bound
	\be 
	\mathbb P\bigg(\Big|\cD_n^\ominus\Big(\frac{\log n-m}{Z_n}\Big)\Big|\geq n\e^{-m(1+\delta)}\bigg)\leq C_2\e^{m(\delta-(1-\alpha_\ominus/d))}+\eps/2.
	\ee 
	By the choice of $\delta$ we can thus take $N\in\N$ such that $\delta<1-\alpha_\ominus(n)/d$ for all $n\geq N$, and then take $M=M(\delta,\eps)\in\N$ so large that for all $m\geq M$ the first term is at most $\eps/2$ to arrive at the desired upper bound.
	
	We then prove~\eqref{eq:Clin2} and let $\square=\oplus$. We use~\eqref{eq:finC1} with $t=(\log n-m)/Z_n$ and $\theta=n\e^{m(1+\delta)}$, and note that $s_\oplus=Z_n$. With similar calculations we obtain, for all $n\geq N$ and some large, fixed $N\in\N$,
	\be \ba 
	\mathbb P\bigg(\Big|\cD_n^\oplus\Big(\frac{\log n-m}{Z_n}\Big)\Big|\geq n\e^{-m(1+\delta)}\bigg)
	&\leq\wt C_2 \e^{m(1+\delta)} n^{-(2-\alpha_\oplus/d)}\e^{(2-\alpha_\oplus/d)(\log n-m)}+\eps/2\\
	&=\wt C_2 \e^{m(\delta -(1-\alpha_\oplus/d))} +\eps/2.
	\ea \ee 
	By the choice of $\delta$ we can thus let $M=M(\delta,\eps)$ be so large that the first term is at most $\eps/2$ for all $m\geq M$ to arrive at the desired upper bound. To finally prove~\eqref{eq:Csublin}, we use~\eqref{eq:finC1} with $t=(1-\delta_n)\log(n)/Z_n$ and $\theta =\zeta \e^t$, where $\zeta>0$ is fixed, and $(\delta_n)_{n\in\N}$ is a sequence such that $\limsup_{n\to\infty} \delta_n<1$ and $\delta_n\log n$ diverges. We then obtain 
	\be \ba 
	\mathbb P\bigg(\Big|\cD_n^\ominus\Big(\frac{(1-\delta_n)\log n}{Z_n}\Big)\Big|\geq \zeta \e^{(1-\delta_n)\log(n)/Z_n}\bigg)&\leq \frac{C_2}{ \zeta} n^{-(1-\alpha_\ominus/d)}\e^{(1-\alpha_\ominus/d)(1-\delta_n)\log n}+\eps/2\\ 
	&= \frac{C_2}{\zeta} \e^{-(1-\alpha_\ominus/d)\delta_n\log n}+\eps/2. 
	\ea \ee 
	As $1-\alpha_\ominus/d$ is bounded away from zero since $\sup_{n\in\N}\alpha_\ominus(n)<d$, it follows that the first term on the right-hand side converges to zero as $n$ tends to infinity, irrespective of the choice of $\eps$ and $\zeta$. Thus, we obtain 
	\be 
	\limsup_{n\to\infty} \mathbb P\bigg(\Big|\cD_n^\ominus\Big( \frac{(1-\delta_n)\log n}{Z_n}\Big)\Big|\geq \zeta \e^{(1-\delta_n)\log(n)/Z_n}\bigg)\leq \eps/2.
	\ee 
	However, as the left-hand side does not depend on $\eps$, which was arbitrary, we in fact obtain that the limit of the probability equals zero, for any $\zeta>0$. This proves~\eqref{eq:Csublin} and concludes the proof.
\end{proof}

\section{Absence of coexistence}\label{sec:ub}

In this section, we prove an upper bound on $|\cN_n^\ominus(T_{\mathrm{cov}})|$ when $c_n \to \infty$, which is implied by the following proposition.

\begin{proposition}\label{prop:infectsizes}
	Fix $d\in\N$, let $\alpha_\ominus,\alpha_\oplus\in[0,d)$ such that $\sup_{n\in\N}\alpha_\square(n)<d$ for $\square\in\{\ominus, \oplus\}$, and recall $Z_n$ from~\eqref{eq:Zn}. Let $N_n^\ominus:=|\cN_n^\ominus(T_{\mathrm{cov}})|$ denote the final size of the $\ominus$ infection of an LRC process, started from two distinct initial vertices $v^\ominus,v^\oplus\in\T_n^d$. When $c_n$ diverges, $(N_n^\ominus n^{-1/Z_n})_{n\in\N}$ is a tight sequence of random variables. 
\end{proposition}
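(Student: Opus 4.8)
I would run a two-phase argument built on the coupling of Section~\ref{sec:CTBP_approx}. Fix $\eps>0$; since $N_n^{(1)}\geq1$ and finitely many values of $n$ can be absorbed into the constant, it suffices to produce $A'=A'(\eps)$ with $\limsup_{n\to\infty}\Prob{N_n^{(1)}>A'n^{1/Z_n}}\leq\eps$. Fix also $\delta\in(0,\inf_{n}\min\{1-\alpha_1/d,1-\alpha_2/d\})$, which is positive by Assumption~\ref{ass:par}, and set $t_n:=(\log n-m)/Z_n$ for a large constant $m=m(\eps)$ to be chosen.

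\emph{Phase 1: the configuration at time $t_n$.} Using Proposition~\ref{prop:coupling} and Lemma~\ref{lem:size_CTBP}(2), pick $A=A(\eps)$ so large that with probability at least $1-\eps/4$ one has $|\BP^{(1)}(t_n)|\leq A\e^{t_n}$ and $|\BP^{(2)}(t_n)|\geq A^{-1}\e^{Z_nt_n}$ simultaneously. Since $\cN_n^{(i)}(t)\subseteq\BP^{(i)}(t)$, on this event $|\cN_n^{(1)}(t_n)|\leq A\e^{t_n}\leq An^{1/Z_n}$, while $\e^{Z_nt_n}=n\e^{-m}$ gives $|\BP^{(2)}(t_n)|\geq A^{-1}n\e^{-m}$. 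By Proposition~\ref{prop:gen_size_artificial_CTBP_tail_bound}, equation~\eqref{eq:Clin2}, there are $M,N\in\N$ such that for $m\geq M$ and $n\geq N$ the defect obeys $C^{(2)}((0,t_n])\leq n\e^{-m(1+\delta)}$ with probability at least $1-\eps/4$; fixing $m\geq M$ also large enough that $\e^{-m\delta}\leq A^{-1}/2$, we get on the intersection of these events
\begin{align*}
|\cN_n^{(2)}(t_n)|=|\BP^{(2)}(t_n)|-C^{(2)}((0,t_n])\geq n\e^{-m}\big(A^{-1}-\e^{-m\delta}\big)\geq\xi n,\qquad\xi:=\tfrac{1}{2A}\e^{-m}>0.
\end{align*}
Calling $\cG_n$ the event that $|\cN_n^{(1)}(t_n)|\leq An^{1/Z_n}$ and $|\cN_n^{(2)}(t_n)|\geq\xi n$, we thus have $\Prob{\cG_n}\geq1-\eps/2$ for all large $n$.

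\emph{Phase 2: a supermartingale (P\'olya-urn) bound up to the cover time.} By the memoryless property of the exponential transmission times, conditionally on $\cF_{t_n}:=\sigma\big((\cN_n^{(1)}(s),\cN_n^{(2)}(s)):s\leq t_n\big)$ the rest of the process is an LRC process started from $\cN_n^{(1)}(t_n),\cN_n^{(2)}(t_n)$, which I analyse through its discrete description (Section~\ref{sec:model_def}). Write $S_k^{(i)}:=|\cN_n^{(i)}(T_k)|$ for $k$ from $k_n:=|\cN_n^{(1)}(t_n)\cup\cN_n^{(2)}(t_n)|$ to $n$, let $(\cF_k)$ be the natural filtration, and $p_k:=\Prob{S_{k+1}^{(1)}=S_k^{(1)}+1\mid\cF_k}$. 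Bounding the total Type-$1$ infection rate from above via Lemma~\ref{lemma:rateest} by $S_k^{(1)}R_{n-k}(\alpha_1)/R_n^{(1)}$ and the total Type-$2$ rate from below by $S_k^{(2)}(R_n^{(2)}-\lambda R_k(\alpha_2))/R_n^{(1)}$, then using Lemma~\ref{lemma:ratesupinf} on the numerator, the crude estimate $R_n^{(2)}-\lambda R_k(\alpha_2)\gtrsim\lambda(n-k)n^{-\alpha_2/d}$ on the denominator, and $S_k^{(2)}\geq\xi n$ (valid for all $k\geq k_n$ on $\cG_n$, since $S_k^{(2)}$ is non-decreasing), I obtain $p_k\leq S_k^{(1)}g_k$ with $g_k$ \emph{deterministic} and $g_k\asymp(\xi\lambda)^{-1}n^{\alpha_2/d-1}(n-k)^{-\alpha_1/d}$. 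Hence $M_k:=S_k^{(1)}\prod_{j=k_n}^{k-1}(1+g_j)^{-1}$ is a non-negative $(\cF_k)$-supermartingale, so on $\cG_n$
\begin{align*}
\E{N_n^{(1)}\mid\cF_{t_n}}=\E{S_n^{(1)}\mid\cF_{t_n}}\leq S_{k_n}^{(1)}\prod_{j=k_n}^{n-1}(1+g_j)\leq S_{k_n}^{(1)}\exp\Big(\textstyle\sum_{j=k_n}^{n-1}g_j\Big).
\end{align*}
The crucial point is that $\sum_{j=k_n}^{n-1}g_j\asymp(\xi\lambda)^{-1}n^{\alpha_2/d-1}\sum_{i=1}^{n-k_n}i^{-\alpha_1/d}\lesssim\xi^{-1}\lambda^{-1}n^{(\alpha_2-\alpha_1)/d}$ (here $\sup_n\alpha_i(n)<d$ keeps $1-\alpha_1/d$ away from $0$, so the partial sum is $\lesssim n^{1-\alpha_1/d}$), and since $Z_n=R_n^{(2)}/R_n^{(1)}\asymp\lambda n^{(\alpha_1-\alpha_2)/d}$ we have $\lambda^{-1}n^{(\alpha_2-\alpha_1)/d}\asymp Z_n^{-1}\leq1$. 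Thus $\sum_jg_j\leq C(\xi)<\infty$ uniformly in $n$, giving $\E{N_n^{(1)}\mid\cF_{t_n}}\leq\e^{C(\xi)}S_{k_n}^{(1)}$ on $\cG_n$.

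\emph{Conclusion and main obstacle.} On $\cG_n$ we have $S_{k_n}^{(1)}=|\cN_n^{(1)}(t_n)|\leq An^{1/Z_n}$, so the last bound together with Markov's inequality gives $\Prob{N_n^{(1)}>(2/\eps)\e^{C(\xi)}An^{1/Z_n}\mid\cF_{t_n}}\leq\eps/2$ on $\cG_n$; combining with $\Prob{\cG_n^c}\leq\eps/2$ and setting $A':=(2/\eps)\e^{C(\xi)}A$ yields $\limsup_n\Prob{N_n^{(1)}>A'n^{1/Z_n}}\leq\eps$, which is the claimed tightness. The main obstacle is Phase 2: extracting the bound $p_k\leq S_k^{(1)}g_k$ with $(g_k)$ \emph{summable uniformly in $n$ all the way up to the cover time}, where the uninfected set is tiny and the two types' per-vertex infection rates become comparable; the cancellation that rescues the estimate is exactly $\lambda^{-1}n^{(\alpha_2-\alpha_1)/d}\asymp Z_n^{-1}$, and isolating it cleanly needs the two-sided rate bounds of Lemmas~\ref{lemma:rateest}--\ref{lemma:ratesupinf} together with care about the $n$-dependence of all parameters — and it is precisely this step that breaks down when $\alpha_i\uparrow d$.
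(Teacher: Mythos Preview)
Your proof is correct and follows essentially the same two-phase strategy as the paper: Phase~I via the Yule coupling and Proposition~\ref{prop:gen_size_artificial_CTBP_tail_bound} to reach the configuration $|\cN_n^{(1)}(t_n)|\leq An^{1/Z_n}$, $|\cN_n^{(2)}(t_n)|\geq\xi n$ (the paper's Lemma~\ref{lemma:ubinfection}), and Phase~II via the same supermartingale/P\'olya-urn comparison (the paper's Lemmas~\ref{lemma:supermartingale}--\ref{lemma:phase2}). The only cosmetic difference is that in Phase~II you lower-bound the Type-$2$ rate by $S_k^{(2)}(R_n^{(2)}-R_k^{(2)})$ whereas the paper uses the other branch of the maximum in Lemma~\ref{lemma:rateest}, namely $(n-k)(R_n^{(2)}-R_{|B_k^c|}^{(2)})\geq(n-k)\gamma R_n^{(2)}$; both lead to $\sum_j g_j\lesssim Z_n^{-1}\leq1$ and hence the same conclusion.
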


\noindent To prove Proposition~\ref{prop:infectsizes}, we divide the competition process in two phases. In phase $\mathrm I$, the $\ominus$ infection grows to a sub-linear size of the `correct order', and the $\oplus$ infection grows linearly large. In  phase $\mathrm{II}$, the $\ominus$ infection only increases its size up to a multiplicative constant, that is, it stays of the same order, whereas the $\oplus$ infection is able to infect all the remaining vertices. This intuitive two-phase approach is outlined more precisely in the following two lemmas. To this end, we define, for $A,m>0$ and $\xi\in(0,1)$, the following event:
\be\label{eq:en}
\cE_n(A,m,\xi)\coloneq  \bigg\{\Big|\cN_n^\ominus\Big(\frac{\log n-m}{Z_n}\Big)\Big|\leq A\exp\Big( \frac{\log n-m}{Z_n}\Big)\bigg\}\cap\bigg\{\Big|\cN_n^\oplus\Big(\frac{\log n-m}{Z_n}\Big)\Big|\geq \xi n \bigg\}.
\ee 
We can then formulate the following result, regarding phase $\mathrm I$:

\begin{lemma}[Phase $\mathrm I$]\label{lemma:ubinfection}
	Fix $d\in\N$, let $\alpha_\ominus,\alpha_\oplus\in[0,d)$ such that $\sup_{n\in\N}\alpha_\square(n)<d$ for $\square\in\{\ominus, \oplus\}$, and recall $Z_n$ from~\eqref{eq:Zn}.  When $c_n$ diverges, for any $\eps>0$ and all fixed $A,m$ sufficiently large and $\xi=\xi(m)$ sufficiently small, we have for all $n$ large that
	\be
	\P{\cE_n(A,m,\xi)}\geq 1-\eps.
	\ee 
\end{lemma}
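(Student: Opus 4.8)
The plan is to read Lemma~\ref{lemma:ubinfection} off the branching-process coupling of Section~\ref{sec:coupling}, the Yule-size estimates of Lemma~\ref{lem:size_CTBP}, and the defect bounds of Proposition~\ref{prop:gen_size_artificial_CTBP_tail_bound}. Write $t_n:=(\log n-m)/Z_n$, so that $\e^{t_n}=n^{1/Z_n}\e^{-m/Z_n}$ and $\e^{Z_n t_n}=n\e^{-m}$, and recall from the coupling that $\cN_n^{(i)}(t)\subseteq\BP^{(i)}(t)$ with $\BP^{(i)}(t)\setminus\cN_n^{(i)}(t)=\cR^{(i)}(t)\cup\cD^{(i)}(t)$, where by Proposition~\ref{prop:coupling} the processes $(\BP^{(1)}(t))_{t\ge0}$ and $(\BP^{(2)}(t))_{t\ge0}$ are Yule processes of rates $1$ and $Z_n$, and $|\cR^{(i)}(t)\cup\cD^{(i)}(t)|\overset d= C^{(i)}((0,t])$. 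We may assume $m\in\N$.

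I would introduce three good events. On $\cG_1:=\{\sup_{t>0}|\BP^{(1)}(t)|\e^{-t}\le A\}$ one has $|\cN_n^{(1)}(t_n)|\le|\BP^{(1)}(t_n)|\le A\e^{t_n}$, which is precisely the first event in $\cE_n(A,m,\xi)$, and by Lemma~\ref{lem:size_CTBP}(2) we have $\P{\cG_1}\ge 1-\eps/3$ once $A$ is large, uniformly in $n$. On $\cG_2:=\{\inf_{t>0}|\BP^{(2)}(t)|\e^{-Z_n t}\ge A^{-1}\}$ one has $|\BP^{(2)}(t_n)|\ge A^{-1}n\e^{-m}$; since a rate-$Z_n$ Yule process evaluated at $t$ is equal in law to a rate-$1$ Yule process evaluated at $Z_n t$, Lemma~\ref{lem:size_CTBP}(2) again gives $\P{\cG_2}\ge1-\eps/3$ for $A$ large, uniformly in $n$. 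Finally, fix any $\delta\in(0,\inf_{n\in\N}\min\{1-\alpha_1(n)/d,1-\alpha_2(n)/d\})$, and use \eqref{eq:Clin2} of Proposition~\ref{prop:gen_size_artificial_CTBP_tail_bound} together with the distributional identity above to find $M=M(\eps,\delta)$ and $N$ such that, for $m\ge M$ and $n\ge N$, the event $\cG_3:=\{|\cR^{(2)}(t_n)\cup\cD^{(2)}(t_n)|\le n\e^{-m(1+\delta)}\}$ has probability at least $1-\eps/3$.

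On $\cG_1\cap\cG_2\cap\cG_3$ we then obtain $|\cN_n^{(1)}(t_n)|\le A\e^{t_n}$ and, since $\BP^{(2)}(t_n)$ is the disjoint union of $\cN_n^{(2)}(t_n)$, $\cR^{(2)}(t_n)$ and $\cD^{(2)}(t_n)$,
\[
|\cN_n^{(2)}(t_n)|=|\BP^{(2)}(t_n)|-|\cR^{(2)}(t_n)\cup\cD^{(2)}(t_n)|\ge A^{-1}n\e^{-m}-n\e^{-m(1+\delta)}=n\e^{-m}\big(A^{-1}-\e^{-m\delta}\big).
\]
Enlarging $m$ (depending only on $A$ and $\delta$) so that $\e^{-m\delta}\le(2A)^{-1}$ makes the right-hand side at least $n\e^{-m}/(2A)$; setting $\xi:=\e^{-m}/(2A)$ (or any smaller positive value) then gives $\cG_1\cap\cG_2\cap\cG_3\subseteq\cE_n(A,m,\xi)$, so a union bound yields $\P{\cE_n(A,m,\xi)}\ge1-\eps$ for all $n\ge N$. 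This matches the quantifier order of the statement: first $A$ large, then $m$ large depending on $A$, then $\xi$ small depending on $A$ and $m$.

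I expect no real obstacle beyond careful bookkeeping. The one point requiring attention is that Proposition~\ref{prop:gen_size_artificial_CTBP_tail_bound} treats the regimes $Z_n$ bounded away from $1$ (where the threshold $N$ in \eqref{eq:Clin2} may depend on $m$) and $Z_n\to1$ (where one needs $m\ge M(\eps,\delta)$ with $N$ independent of $M$) differently, with a general sequence splitting into these two sub-sequences; since in our ordering $m$ is frozen before $n$, all cases are compatible, but one must fix $A,\delta,m,\xi$ before choosing $N$. One should also keep the constant $A$ in $\cG_1,\cG_2$ independent of $n$, which is legitimate because the Yule estimates of Lemma~\ref{lem:size_CTBP}(2) are scale-invariant and hence uniform over the $n$-dependent rate $Z_n$; the remaining steps — the union bound and the two elementary inequalities — are routine.
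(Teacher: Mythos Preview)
Your proof is correct and follows essentially the same route as the paper's: both use the Yule domination $|\cN_n^{(1)}(t_n)|\le|\BP^{(1)}(t_n)|$ together with Lemma~\ref{lem:size_CTBP}(2) for the first half of $\cE_n$, and for the second half both write $|\cN_n^{(2)}(t_n)|=|\BP^{(2)}(t_n)|-C^{(2)}((0,t_n])$, bound the Yule term below via Lemma~\ref{lem:size_CTBP}(2) and the defect via \eqref{eq:Clin2} of Proposition~\ref{prop:gen_size_artificial_CTBP_tail_bound}. The only cosmetic difference is that the paper uses $m^{-1}$ rather than $A^{-1}$ as the lower bound on $|\BP^{(2)}(t_n)|\e^{-Z_nt_n}$, thus folding the Yule constant into $m$ instead of keeping it as a separate parameter; your handling of the $m$-vs-$N$ dependence in Proposition~\ref{prop:gen_size_artificial_CTBP_tail_bound} is slightly more explicit than the paper's.
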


\noindent When $c_n$ diverges, it follows that $\log(n)/Z_n=\log n - \cO(c_n\wedge \log n)$, so that the upper bound on the size of the $\ominus$ infection in Proposition~\ref{lemma:ubinfection} is indeed sub-linear, whereas the $\oplus$ infection has a linear size by time $(\log n-m)/Z_n$. Additionally we have the following result for Phase $\mathrm{II}$, where we recall that a sequence of real-valued random variables $(X_n)_{n\in\N}$ is \emph{tight} when for any $\eps>0$ there exists $K=K(\eps)$ such that $\P{|X_n|>K}<\eps$.

\begin{lemma}[Phase $\mathrm{II}$]\label{lemma:phase2} Fix $A,m>0$ and $\xi\in(0,1)$ as in Lemma~\ref{lemma:ubinfection}. Conditionally on the event $\cE_n(A,m,\xi)$, $(N_n^\ominus n^{-1/Z_n})_{n\in\N}$ is a tight sequence of random variables.
\end{lemma}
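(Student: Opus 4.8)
The plan is to analyse the discrete-time chain $(\cN_n^{(1)}(T_k),\cN_n^{(2)}(T_k))_{k}$ from Section~\ref{sec:model_def} during Phase~$\mathrm{II}$, i.e.\ from the step $k_0$ in force at time $t_n:=(\log n-m)/Z_n$ (so that $\cN_n^{(i)}(t_n)=\cN_n^{(i)}(T_{k_0})$) up to step $n$, and to show that on $\cE_n(A,m,\xi)$ the Type-$1$ count grows by at most a bounded factor. Write $M_k:=|\cN_n^{(1)}(T_k)|$, $S_2^{(k)}:=|\cN_n^{(2)}(T_k)|$, and let $U_k$ be the set of uninfected vertices, so $|U_k|=n-k$. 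On $\cE_n(A,m,\xi)$ we have $M_{k_0}\le A\e^{-m/Z_n}n^{1/Z_n}\le An^{1/Z_n}$ and $S_2^{(k_0)}\ge\xi n$; since the Type-$2$ count is non-decreasing along the chain, $S_2^{(k)}\ge\xi n$ for every $k\ge k_0$. Let $p_k$ denote the conditional probability that the $(k{+}1)$-st infected vertex is of Type-$1$, given the history up to step $k$. From the discrete description, $p_k=\Phi_1^{(k)}/(\Phi_1^{(k)}+\Phi_2^{(k)})\le \Phi_1^{(k)}/\Phi_2^{(k)}$, where $\Phi_1^{(k)}=\sum_{x\in\cN_n^{(1)}(T_k)}\sum_{v\in U_k}\|x-v\|^{-\alpha_1}$ and $\Phi_2^{(k)}=\lambda\sum_{v\in U_k}\sum_{y\in\cN_n^{(2)}(T_k)}\|v-y\|^{-\alpha_2}$.

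The crux is the per-step estimate
\be
p_k\ \le\ \frac{C_\xi\,M_k}{Z_n\,(n-k)^{\alpha_1/d}\,n^{1-\alpha_1/d}},\qquad k_0\le k<n,
\ee
valid for all large $n$, with $C_\xi\in(0,\infty)$ depending only on $\xi$, $d$, the $\ell^p$-norm, and $\sup_n\alpha_i(n)$. To obtain it, I would first apply Lemma~\ref{lemma:rateest}: $\Phi_1^{(k)}\le M_k\,R_{n-k}(\alpha_1)$, and $\Phi_2^{(k)}\ge \lambda(n-k)\big(R_n(\alpha_2)-R_{n-S_2^{(k)}}(\alpha_2)\big)\ge \lambda(n-k)\big(R_n(\alpha_2)-R_{\lceil(1-\xi)n\rceil}(\alpha_2)\big)$, using $n-S_2^{(k)}\le(1-\xi)n$ and monotonicity of $R_j$ in $j$. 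Then I would use Lemma~\ref{lemma:ratesupinf} for $R_{n-k}(\alpha_1)\le\overline c\,(n-k)^{1-\alpha_1/d}$; the asymptotics~\eqref{eq:Ralphan} together with $\sup_n\alpha_2(n)<d$ (which keeps $R(\alpha_2(n))$ in a compact subinterval of $(0,\infty)$ and keeps $1-(1-\xi)^{1-\alpha_2(n)/d}$ bounded away from $0$) to get $R_n(\alpha_2)-R_{\lceil(1-\xi)n\rceil}(\alpha_2)\ge\kappa_\xi\, n^{1-\alpha_2/d}$ for $n$ large; and finally $\lambda\, n^{1-\alpha_2/d}\ge R_n^{(2)}/\overline c=Z_nR_n^{(1)}/\overline c\ge Z_n\underline c\,n^{1-\alpha_1/d}/\overline c$. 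Combining these four bounds gives the displayed estimate.

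Given the estimate, the endgame is a routine supermartingale/Markov argument (the spatial analogue of the P\'olya urn computation for $\alpha_1=\alpha_2=0$). Set $a_k:=C_\xi\big(Z_n(n-k)^{\alpha_1/d}n^{1-\alpha_1/d}\big)^{-1}$, so that $\E{M_{k+1}\mid\mathcal F_k}\le M_k(1+a_k)$; hence $M_k\prod_{j=k_0}^{k-1}(1+a_j)^{-1}$ is a non-negative supermartingale, and $\E{M_n\mid\mathcal F_{k_0}}\le M_{k_0}\prod_{j=k_0}^{n-1}(1+a_j)\le M_{k_0}\exp\!\big(\sum_{j=k_0}^{n-1}a_j\big)$. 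Since $\sum_{\ell=1}^{n}\ell^{-\alpha_1/d}\le C''n^{1-\alpha_1/d}$ uniformly (using $\sup_n\alpha_1(n)<d$) and $Z_n\ge1$, the exponent is bounded by a constant, so $\E{M_n\mid\mathcal F_{k_0}}\le \e^{C_\xi C''}M_{k_0}$. On $\cE_n(A,m,\xi)$ we have $M_{k_0}\le An^{1/Z_n}$, and $\cE_n(A,m,\xi)$ together with $k_0$ and $M_{k_0}$ are measurable with respect to $\sigma(\cN_n^{(1)}(s),\cN_n^{(2)}(s):s\le t_n)$, so multiplying by $\indicwo{\cE_n(A,m,\xi)}$, taking expectations and dividing by $n^{1/Z_n}\P{\cE_n(A,m,\xi)}$ yields $\E{N_n^{(1)}n^{-1/Z_n}\mid\cE_n(A,m,\xi)}\le A\e^{C_\xi C''}$ for all $n$ (the finitely many small $n$ being harmless since $N_n^{(1)}\le n$). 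Markov's inequality then gives $\P{N_n^{(1)}n^{-1/Z_n}>L\mid\cE_n(A,m,\xi)}\le A\e^{C_\xi C''}/L\to 0$ as $L\to\infty$, uniformly in $n$, which is the asserted tightness.

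The main obstacle is the per-step bound on $p_k$: the lower bound on $\Phi_2^{(k)}$ must be made genuinely uniform over $n$ \emph{and} over the (possibly $n$-dependent) exponents $\alpha_i(n)$, while controlling how it degrades as $k\uparrow n$ and $|U_k|$ shrinks. What rescues the argument is that, even though $p_k$ can be as large as order $M_k/(n-k)$ near the end of Phase~$\mathrm{II}$, the multipliers $a_k$ still sum to $O(1/Z_n)$ because $\sum_{\ell=1}^{n}\ell^{-\alpha_1/d}=O(n^{1-\alpha_1/d})$ when $\alpha_1<d$; a secondary technical point is the measurability bookkeeping needed to apply the Markov property of the discrete chain at time $t_n$.
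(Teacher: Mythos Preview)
Your proof is correct and follows essentially the same route as the paper: both apply Lemma~\ref{lemma:rateest} to bound $\Phi_1^{(k)}\le M_kR_{n-k}(\alpha_1)$ and $\Phi_2^{(k)}\ge(n-k)\big(R_n^{(2)}-R^{(2)}_{\lceil(1-\xi)n\rceil}\big)$, turn this into a supermartingale for $M_k$ with multiplicative factors $(1+a_k)$, bound $\sum a_k=O(1/Z_n)$ via $\sum_\ell \ell^{-\alpha_1/d}=O(n^{1-\alpha_1/d})$, and finish with Markov. The paper packages the supermartingale in a separate lemma (Lemma~\ref{lemma:supermartingale}) and writes the denominator bound as $R_n^{(2)}-R^{(2)}_{|B_j^c|}\ge\gamma R_n^{(2)}$, but the arithmetic and the idea are identical to yours.
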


\noindent Proposition~\ref{prop:infectsizes} then readily follows: 

\begin{proof}[Proof of Proposition~\ref{prop:infectsizes} subject to Lemmas~\ref{lemma:ubinfection} and~\ref{lemma:phase2}.]
	Fix $\eps>0$ and choose $A$ and $m$ sufficiently large and $\xi$ sufficiently small so that $\P{\cE_n(A,m,\xi)^c}\leq \eps/2$, using Lemma~\ref{lemma:ubinfection}. Then, use Lemma~\ref{lemma:phase2} to choose $K>0$ sufficiently large so that $\mathbb P\big(N_n^\ominus n^{-1/Z_n}\geq K\,\big|\, \cE_n(A,m,\xi)\big)\leq \eps/2$. It then follows that 
	\be 
	\P{N_n^\ominus n^{-1/Z_n}\geq K}\leq \P{N_n^\ominus n^{-1/Z_n}\geq K\,\Big|\, \cE_n(A,m,\xi)}+\P{\cE_n(A,m,\xi)^c}\leq \eps, 
	\ee 
	as desired.
\end{proof}

\noindent It thus remains to prove the two lemmas. The bulk of the work for Lemma~\ref{lemma:ubinfection} is already carried out in Section~\ref{sec:defect}, in Proposition~\ref{prop:gen_size_artificial_CTBP_tail_bound} in particular, so that we can provide the proof straight away with relative ease. Lemma~\ref{lemma:phase2} requires a bit more work, and is based on a P\'olya urn argument, which we take care of in Section~\ref{sec:polyaapprox}.

\begin{proof}[Proof of Lemma~\ref{lemma:ubinfection}]
	We set $t_n:=(\log n-m)/Z_n$ for ease of writing. We prove that the event $\cE_n$, as in~\eqref{eq:en}, holds with probability at least $1-\eps$ in two steps. First, as  $\cN_n^\ominus(t)\overset \dd= \cO^\ominus_n(t)\subseteq \cX_n^\ominus(t)$ for all $t\geq0$ by the coupling in Proposition~\ref{prop:coup_LRC_CTBRW}. As $|\cX_n^\ominus(t)|=|\BP^\ominus(t)|$, with $\BP^\ominus$ a rate one Yule process, we directly obtain for any $\eps>0$ and $A$ sufficiently large, that 
	\be \label{eq:N1bound}
	\P{|\cO_n^\ominus(t_n)|>A\e^{t_n}}\leq \P{|\BP^\ominus(t_n)|>A\e^{t_n}}\leq \P{\sup_{t\geq 0}|\BP^\ominus(t)|\e^{-t}>A}< \eps/2, 
	\ee 
	where the final step follows from Lemma~\ref{lem:size_CTBP}. Then, recalling $\cD_n^\oplus$ from~\eqref{eq:Dn}, we observe that 
	\be 
	|\cO_n^\oplus(t_n)|=|\cX_n^\oplus(t_n)|-|\cD_n^\oplus(t_n)|.
	\ee
	We recall that $\BP^\oplus$ is a Yule process with rate $Z_n$ and that $|\cX_n^\oplus(t)|=|\BP^\oplus(t)|$ for all $t\geq0$. As a result, for $\delta\in(0,\inf_{n\in\N}\min\{1-\alpha_\ominus/d,1-\alpha_\oplus/d\})$, the event 
	\be 
	\cG_n:=\{|\BP^\oplus(t_n)|\geq m^{-1}\e^{-m}n, \cD_n^\oplus(t_n)< n\e^{-m(1+\delta)}\}
	\ee 
	implies that, with $\xi<(m^{-1}-\e^{-\delta m})\e^{-m}$ (where the right-hand side is positive for $m$ large),
	\be 
	|\cO_n^\oplus(t_n)|\geq (m^{-1}-\e^{-\delta m})\e^{-m}n>\xi n. 
	\ee 
	The event $\cG_n$ holds with probability at least  $1-\eps/2$ for all $m$ and $n$ large, by Lemma~\ref{lem:size_CTBP} (using a similar proof as in~\eqref{eq:N1bound}, now using the infimum rather than the supremum) and Proposition~\ref{prop:gen_size_artificial_CTBP_tail_bound} (by the choice of $\delta$). Combining this with the bound in~\eqref{eq:N1bound} and Proposition~\ref{prop:coup_LRC_CTBRW}, we thus arrive at the desired conclusion.
\end{proof}

\subsection{Phase II: Proof of Lemma~\ref{lemma:phase2}}\label{sec:polyaapprox}

The event $\cE_n(A,m,\xi)$ implies that, by time $t_n:=(\log n-m)/Z_n$, the $\oplus$ infection has infected at least $k_n^\oplus:=\xi n$ many vertices and the $\ominus$ infection has infected at most $k_n^\ominus:=A\exp((\log n-m)/Z_n)$ many vertices.

In Phase II of the process, we aim to show that the $\ominus$ infection only reaches a small portion of the remaining number of uninfected vertices. Heuristically this should be the case, independently of \emph{which} vertices are exactly infected so far, due to the overwhelming advantage that the $\oplus$ infection already has. To make this heuristic argument precise, we use a discrete-time description of the infection process. Recall the stopping times $T_k$ from~\eqref{eq:Tkdef}. By Proposition~\ref{prop:coup_LRC_CTBRW} we can replace $\cO^\square_n(t)$ by $\cN_n^\square(t)$ in the definition of $T_k$. We note that $T_{\mathrm{cov}}=T_n$ and we set $T_2=0$. The discrete time process $(\cN_n^\ominus(T_k),\cN_n^\oplus(T_k))_{k=2}^n$ is then defined as follows. We set $\cN_n^\ominus(T_2)=\{v^\ominus\}$ and $\cN_n^\oplus(T_2)=\{v^\oplus\}$. For any $k\in\{2,\ldots, n-1\}$, conditionally on $(\cN_n^\ominus(T_k),\cN_n^\oplus(T_k))$, the pair $(\cN_n^\ominus(T_{k+1}),\cN_n^\oplus(T_{k+1}))$ is obtained by first sampling a vertex $v  \in (\cN_n^\ominus(T_k)\cup \cN_n^\oplus(T_k))^c$ with probability 
\begin{align*}
	\frac{\sum_{x \in \cN_n^\ominus(T_k)}\|x-v\|^{-\alpha_\ominus}+\sum_{y \in \cN_n^\oplus(T_k)}\lambda\|y-v\|^{-\alpha_\oplus}}{\sum_{z \in (\cN_n^\ominus(T_k)\cup \cN_n^\oplus(T_k))^c}\left(\sum_{x \in \cN_n^\ominus(T_k)}\|x-z\|^{-\alpha_\ominus}+\sum_{y \in \cN_n^\oplus(T_k)}\lambda\|y-z\|^{-\alpha_\oplus}\right)}.
\end{align*} 
Conditionally on both $v$ and $(\cN_n^\ominus(T_k), \cN_n^\oplus(T_k))$, with probability
\begin{align*}
	p_k:=\frac{\sum_{x \in \cN_n^\ominus(T_k)}\|x-v\|^{-\alpha_\ominus}}{\sum_{x \in \cN_n^\ominus(T_k)}\|x-v\|^{-\alpha_\ominus}+\sum_{y \in \cN_n^\oplus(T_k)}\lambda\|y-v\|^{-\alpha_\oplus}},
\end{align*}
we set $(\cN_n^\ominus(T_{k+1}),\cN_n^\oplus(T_{k+1}))=(\cN_n^\ominus(T_k)\cup\{v\},\cN_n^\oplus(T_k))$, and with probability $1-p_k$ we set $(\cN_n^\ominus(T_{k+1}),\cN_n^\oplus(T_{k+1}))=(\cN_n^\ominus(T_k),\cN_n^\oplus(T_k)\cup\{v\})$.

By the memoryless property of the exponential distribution and the fact that the minimum of exponential random variables is again exponentially distributed, the continuous-time process, sampled at the stopping times $T_k$, is indeed equal in distribution to this discrete-time process.

From here on out, we work conditionally on the event $\cE_n(A,m,\xi)$, and omit its arguments for ease of writing. Before proving Lemma~\ref{lemma:phase2}, let us introduce some notation. We set $\ell_n:=n-(|\cN_n^\ominus(t_n)|+|\cN_n^\oplus(t_n)|)$ and let $(A_j)_{j=0}^{\ell_n},(B_j)_{j=0}^{\ell_n}$ denote the sets of vertices that belong to the $\ominus$ and $\oplus$ infections when exactly $|\cN_n^\ominus(t_n)|+|\cN_n^\oplus(t_n)|+j$ many vertices have been infected, respectively. In particular, $A_0=\cN_n^\ominus(t_n)$, $B_0=\cN_n^\oplus(t_n)$, and $A_{\ell_n}=\cN_n^\ominus(T_{\mathrm{cov}})$, $B_{\ell_n}=\cN_n^\oplus(T_{\mathrm{cov}})$ . The evolution of these sets can be described using the discrete analogue of the competition process introduced above.

\begin{proof}[Proof of Lemma~\ref{lemma:phase2}]
	For $j\in[\ell_n]$, let $\cF_j$ denote the $\sigma$-algebra generated by the sets $(A_i)_{i=0}^j $ and $(B_i)_{i=0}^j$. We note that the event $\cE_n$ is $\cF_j$ measureable for any $j\in[\ell_n]$.  We write 
	\be\ba 
	\mathbb E\big[{}&|A_{j+1}|\,\big|\, \cF_j \big]\\
	&=|A_j|+\frac{\sum_{v\in A_j}\sum_{u\in(A_j\cup B_j)^c}\!\|u-v\|^{-\alpha_\ominus}}{\sum_{v\in A_j}\sum_{u\in (A_j\cup B_j)^c}\!\|u-v\|^{-\alpha_\ominus}+\sum_{v\in B_j}\sum_{u\in (A_j\cup B_j)^c}\lambda \|u-v\|^{-\alpha_\oplus}}.
	\ea \ee 
	We bound the denominator from below by omitting the first sum and, using Lemma~\ref{lemma:rateest}, we bound the numerator from above by $|A_j|R^\ominus_{|(A_j \cup B_j)^c|}$ and the remaining term in the denominator from below by $|(A_j\cup B_j)^c|(R_n^\oplus-R_{|B_j^c|}^\oplus)$. Overall, we obtain the upper bound
	\be 
	\E{|A_{j+1}|\,|\, \mathcal F_j}\leq |A_j|\Big(1+ \frac{R_{|(A_j\cup B_j)^c|}^\ominus}{|(A_j\cup B_j)^c|(R^\oplus_n-R^\oplus_{|B_j^c|})}\Big).
	\ee 
	We then use the following estimate. Since, conditionally on the event $\cE_n$, $|B_0^c|=|\cN_n^\oplus(t_n)^c|\leq (1-\xi) n$ and as $|B_j^c|$ is non-increasing in $j$, it follows that, uniformly in $j$, 
	\be \label{eq:Rlb}
	R_n^\oplus-R_{|B_j^c|}^\oplus\geq R_n^\oplus-R_{|B_0^c|}^\oplus\geq \gamma R_n^\oplus, 
	\ee 
	almost surely, where $\gamma\in(0,1)$ is constant and we use Lemma~\ref{lemma:ratesupinf} in the final step. By combining this deterministic lower bound with the fact that $|(A_j\cup B_j)^c|=n-(|\cN_n^\ominus(t_n)|+|\cN_n^\oplus(t_n)|)-j=\ell_n-j$, we obtain 
	\be 
	\E{\mathbbm 1_{\cE_n}|A_{j+1}|}=\E{\mathbbm 1_{\cE_n}\E{|A_{j+1}|\,|\, \cF_j}}\leq \E{\mathbbm 1_{\cE_n}|A_j|\Big(1+\frac{1}{\gamma R_n^\oplus}\frac{R^\ominus_{\ell_n-j}}{\ell_n-j}\Big)}.
	\ee 
	Continuing this recursion yields
	\be 
	\E{\mathbbm 1_{\cE_n}N^\ominus_n}=\E{\mathbbm 1_{\cE_n}|A_{\ell_n}|}\leq \E{\mathbbm 1_{\cE_n}|A_0|\prod_{j=0}^{\ell_n-1} \Big(1+\frac{1}{\gamma R^\oplus_n}\frac{R^\ominus_{\ell_n-j}}{\ell_n-j}\Big)}.
	\ee 
	Since we have $\ell_n\leq n-k_n^\oplus\leq (1-\xi) n$ and $|A_0|\leq k_n^\ominus$ on the event $\cE_n$, we arrive at 
	\be \ba 
	\E{\mathbbm 1_{\cE_n} N_n^\ominus}\leq k_n^\ominus\exp\bigg(\frac{1}{\gamma R^\oplus_n}\sum_{j=1}^{(1-\xi) n} \frac{R^\ominus_j}{j}\bigg).
	\ea\ee 
	We then use the inequalities in Lemma~\ref{lemma:ratesupinf} to further bound this from above, for some constant $K_1>0$, by 
	\be
	k_n^\ominus\exp \bigg( \frac{K_1}{Z_nn^{1-\alpha_\ominus/d}}\sum_{j=1}^{(1-\xi) n} j^{-\alpha_\ominus/d}\bigg),
	\ee 
	By bounding the sum from above by an integral, using that $Z_n\geq 1$ and using Markov's inequality, we obtain, for some constants $K_2>0$ and $C>0$ the upper bound 
	\begin{align*}
		\P{N_n^\ominus\geq C k_n^\ominus\,\Big|\, \cE_n }\leq \frac{\E{\mathbbm 1_{\cE_n}N_n^\ominus }}{Ck_n^\ominus\P{\cE_n}}\leq C^{-1}\e^{K_2/Z_n}\leq \frac{1}{C(1-\eps)}\e^{K_2},
	\end{align*}
	where we bound $\P{\cE_n}\geq 1-\eps$ in the final step. We can make the right-hand side arbitrarily small by choosing $C$ large, which implies tightness of $N_n^\ominus/k_n^\ominus$, conditionally on $\cE_n $. Since $k_n^\ominus=A\exp((\log n-m)/Z_n)\leq A n^{1/Z_n}$, it follows that $N_n^\ominus n^{-1/Z_n}$ is tight, conditionally on $\cE_n $, as well, so that the desired result follows.
\end{proof}

\section{Coexistence and asymptotic behaviour of $N_n^\ominus$} \label{sec:nocoex}

In this section, we prove Theorems~\ref{thrm:coex} and~\ref{thrm:nocoex}. We remark that Proposition~\ref{prop:infectsizes} in Section~\ref{sec:ub} already implies that the size $N_n^\ominus$ of the $\ominus$ infection is $o(n)$ when $c_n$ diverges, which partially proves Theorem~\ref{thrm:nocoex}. We verify this in the proof of Theorem~\ref{thrm:nocoex}, in Section~\ref{sec:lb}. To obtain the asymptotic results of Theorems~\ref{thrm:nocoex}~\ref{case:1} and~\ref{case:fin}, we improve on Proposition~\ref{lemma:ubinfection} in Sections~\ref{sec:winnerone} and~\ref{sec:winnerfinite}, respectively, by using the approximation of the competition process to two branching random walks, as described in Section~\ref{sec:coupling}. Though we already used this to prove Proposition~\ref{prop:infectsizes}, we use it in a more refined way here to prove that $N_n^\ominus$ converges in distribution in these two cases. Next, we prove a lower bound for $N_n^\ominus$ for Theorems~\ref{thrm:nocoex}~\ref{case:liminf} and~\ref{case:div} in Section~\ref{sec:lb}, that matches the upper bound in Proposition~\ref{prop:infectsizes}. This section also contains the proof of Theorem~\ref{thrm:nocoex}. Finally, in Section~\ref{sec:coex}, we prove Theorem~\ref{thrm:coex}.

\subsection{The winner takes it all but one}\label{sec:winnerone}

To be able to show that the $\oplus$ infection reaches all but a finite number of vertices in this section and in Section~\ref{sec:winnerfinite}, we leverage a different description of the infection processes. Rather than thinking of vertices spreading an infection at a certain rate to other, uninfected, vertices, we assign to each edge $e$ between two vertices in $\T_n^d$ a transmission time $T^\square_e$. This denotes the time required for the  $\square$ infection to traverse the edge $e$ (given one of the incident vertices has the $\square$ infection and the other is uninfected). Here, $T^\square_e$ is an exponential random variable with rate $(R_n^\ominus)^{-1}\|e\|^{-\alpha_\ominus}$ when $\square=\ominus$ and rate $(R_n^\ominus)^{-1}\lambda \|e\|^{-\alpha_\oplus}$ when $\square=\oplus$. Clearly, this is an equivalent viewpoint as before, but we can now let $X_{u,v}^\square$ denote the edge-weighted distance between the vertices $u$ and $v$, formally defined as 
\be \label{eq:dist}
X_{u,v}^\square:=\inf_{\pi \in \Gamma_{u,v}}\sum_{e\in\pi} T_e^\square,
\ee 
where $\Gamma_{u,v}$ is the set that consists of all self-avoiding paths from $u$ to $v$ in $\T_n^d$. Studying the behaviour of $X^\square_{u,v}$ allows us to make conclusions regarding the spread of the $\square$ infection. 

The $X^\square_{u,v}$ are the object of study in an article of the first author and Van der Hofstad~\cite{HofLod23}. We generalise this to the following extent. Let $V\subset \T_n^d$ such that $u,v\in \T_n^d\setminus V$, and let $\Gamma_{u,v}^{(V)}$ be the set of self-avoiding paths from $u$ to $v$ in $\T_n^d\setminus V$. Then, define 
\be \label{eq:distV}
X^{\square,(V)}_{u,v}:=\inf_{\pi \in \Gamma^{(V)}_{u,v}}\sum_{e\in \pi}T^\square_e, 
\ee 
as the length of the shortest edge-weighted path from $u$ to $v$ in $\T_n^d\setminus V$. We have the following result for the scaling of $X^{\oplus,(V)}_{u,v}$ for arbitrary sets $V$ that are `not too large'.

\begin{proposition}\label{prop:uniftime}
	Let $V\subset \T_n^d$ such that $|V|\leq\log n$, take $v\in\T_n^d\backslash V$, and let $U$ be a vertex selected uniformly at random from $\T_n^d\backslash (V\cup\{v\})$, independently of everything else. Run a single $\oplus$ infection  on $\T_n^d\backslash V$, started from $v$, with the associated distance metric as in~\eqref{eq:distV}. Finally let $t_n:=(1+\delta_n)\log(n)/Z_n$, where $\delta_n>0$ is such that $\delta_n\log n\to \infty$. Then, there exists $\kappa>0$ such that, uniformly over all sets $V$ of size at most $\log n$ and all $v\in \T_n^d\setminus V$,
	\be \label{eq:unifbound}
	\P{X_{v,U}^{\oplus,(V)}\geq t_n}\leq \e^{-\kappa \delta_n \log n}.
	\ee        
	Moreover, with $\delta_n\equiv \delta>0$ fixed, uniformly over all sets $V$ of size at most $\log n$ and all $v\in\T_n^d\setminus V$, 
	\be\label{eq:maxbound}
	\lim_{n\to\infty}\mathbb P\bigg(\max_{\substack{w\in\T_n^d\setminus V\\ w\neq v}}X_{v,w}^{\oplus,(V)}\geq 2t_n\bigg)=0.
	\ee 
\end{proposition} 

\begin{remark}
	The condition that the size of the set $V$ is at most $\log n$ can be weakened to $|V|\leq n^{1-\eps}$ for any $\eps>0$. However, as we need the upper bound $\log n$ only, we stated the results in this form.
\end{remark}

\begin{proof}
	We can bound the probability in~\eqref{eq:unifbound} by replacing $U$ with a supremum over all vertices in $\T_n^d\setminus V$ that are not $v$. That is, 
	\be\label{eq:unifapply}
	\P{X_{v,U}^{\oplus,(V)}\geq t_n}\leq \sup_{\substack{w\in \T_n^d\backslash V\\w \neq v}}\P{X_{v,w}^{\oplus,(V)}\geq t_n}.
	\ee 
	Similarly, we can bound the probability in~\eqref{eq:maxbound} from above by
	\be \label{eq:maxapply}
	\mathbb P\bigg(\max_{\substack{w\in\T_n^d\setminus V\\ w\neq v}}X_{v,w}^{\oplus,(V)}\geq 2t_n\bigg)\leq n\sup_{\substack{w\in \T_n^d\backslash V\\w \neq v}}\P{X_{v,w}^{\oplus,(V)}\geq 2t_n}.
	\ee 
	We bound $\P{X_{v,w}^{\oplus,(V)}\geq t_n}$ from above by a term that is independent of the choice of $V$ and the choice of $w$. We dominate $X_{v,w}^{\oplus,(V)}$ by the time it takes two independent $\oplus$ infection processes on $\T_n^d\b V$, started at $v$ and $w$, to reach $a_n:=\lfloor c\sqrt n\rfloor$ many individuals, plus the shortest transmission time $T^\oplus_{(x,y)}$, among vertices $x$ and $y$ in the process started at $v$ and $w$, respectively. Here, $c>0$ is a constant to be determined, and we stress that $T^\oplus_{(x,y)}$ is the time it takes $x$ to infect $y$ (or vice versa) along a \emph{single} edge $\{x,y\}$. We let $\tau_j^{(v)}$ denote the time it takes for the infection started from $v$ to reach $j\in [n-|V|]$ many vertices, and let $B_v(\tau_j^{(v)})$ denote the set of vertices, reached by $v$ at time $\tau_j^{(v)}$. Equivalent notation is used for the vertex $w$. We then have the upper bound
	\be \label{eq:maxprob}
	\sup_{\substack{w\in \T_n^d\backslash V\\w \neq v}}\P{\tau_{a_n}^{(v)}+\tau_{a_n}^{(w)}+\min_{\substack{x\in B_w(\tau^{(w)}_{a_n}),y\in B_v(\tau^{(v)}_{a_n})}} T^\oplus_{(x,y)}\geq t_n}.
	\ee 
	Indeed, if the two sets  $B_v(\tau^{(v)}_{a_n})$ and $B_w(\tau^{(w)}_{a_n})$ are disjoint, then the sum of these three terms provides an upper bound for the edge-weighted distance of one possible path from $w$ to $v$, and hence is an upper bound for $X_{v,w}^{\oplus,(V)}$. If the two sets are not disjoint, then the minimum equals zero and the first two terms already are an upper bound for $X_{v,w}^{\oplus,(V)}$, since $v$ and $w$ can be connected within time $\tau_{a_n}^{(v)}+\tau_{a_n}^{(w)}$. We further split this probability into the two terms
	\be\label{eq:probsplit}
	\P{\tau_{a_n}^{(v)}+\tau_{a_n}^{(w)}\geq \Big(1+\frac{\delta_n}{2}\Big)\frac{\log n}{Z_n}}+\P{\min_{\substack{x\in B_w(\tau^{(w)}_{a_n}),y\in B_v(\tau^{(v)}_{a_n})}} T^\oplus_{(x,y)}\geq\frac{\delta_n}{2}\frac{\log n}{Z_n}}. 
	\ee 
	Let us start by bounding the second term from above. As a minimum of exponential random variables is an exponential random variable, whose rate equals the sum of all the individual rates, we can condition on $B_w(\tau^{(w)}_{a_n})$ and $B_v(\tau^{(v)}_{a_n})$ to obtain       
	\be\ba 
	\mathbb P\bigg({}&\min_{\substack{x\in B_w(\tau^{(w)}_{a_n}),y\in B_v(\tau^{(v)}_{a_n})}}\!\!\!\! T^\oplus_{(x,y)}\geq \frac{\delta_n\log n}{2Z_n}\bigg)\\ 
	&=\mathbb E\Bigg[\exp\bigg(\!-\frac{\delta_n}{2}\frac{\log n}{Z_n}\frac{1}{R_n^\ominus}\!\!\!\!\sum_{x\in B_w(\tau^{(w)}_{a_n})}\sum_{y\in B_v(\tau^{(v)}_{a_n})}\!\!\!\!\!\!\!\lambda \|x-y\|^{-\alpha_\oplus}\bigg)\Bigg].
	\ea \ee 
	We now bound $\|x-y\|^{-\alpha_\oplus}\geq  Cn^{-\alpha_\oplus/d}$ for some large constant $C>0$, since we can bound the distance between $x$ and $y$ from above by a constant times $n^{1/d}$ for any $x,y\in \T_n^d$. Further using that $Z_nR_n^\ominus=R_n^\oplus$, the fact that $R_n^\oplus\lambda^{-1}=\Theta(n^{1-\alpha_\oplus/d})$ by Lemma~\ref{lemma:ratesupinf} and that $a_n=\lfloor c\sqrt n\rfloor$, we obtain for some constants $C,C'>0$, the upper bound
	\be \label{eq:minedgebound}
	\exp\Big(-\frac{\delta_n}{2}\frac{\log n}{R^\oplus_n} \lambda a_n^2 Cn^{-\alpha_\oplus/d}\Big)\leq \exp\Big(-c^2C'\frac{\delta_n}{2}\log n\Big).
	\ee
	We then turn to the first term in~\eqref{eq:probsplit}. We observe that the rate $\Lambda_j^{(V)}$ at which the infection reaches the next vertex, given that the infection consists of $j\in[a_n-1]$ many vertices, satisfies the lower bound 
	\be 
	\Lambda_j^{(V)}\geq j(R_n^\ominus)^{-1}(R_n^\oplus-R_{|V|+(j-1)}^\oplus)\geq jZ_n(1-d_n),
	\ee 
	almost surely, by Lemma~\ref{lemma:rateest}, where $d_n:=R^\oplus_{3a_n}/R_n^\oplus$. As a result, with $(E_j^{(v)})_{j\in\N}$ a sequence of i.i.d.\ rate-one exponential random variables, we can almost surely bound 
	\be \label{eq:tauub}
	\tau_{a_n}^{(v)}=\sum_{j=1}^{a_n-1}\frac{E_j^{(v)}}{\Lambda_j^{(V)}}\leq \frac{1}{Z_n(1-d_n)}\sum_{j=1}^{a_n-1}\frac{E_j^{(v)}}{j}.
	\ee 
	Further, we stochastically dominate $\tau_{a_n}^{(w)}$ by $\wt\tau_{a_n}^{(w)}$, which denotes the time it takes a $\oplus$ infection process that starts from $w$ and is run on $\T_n^d\setminus(V\cup B_v(\tau_{a_n}^{(v)}))$ to reach $a_n$ vertices, conditionally on $\tau_{a_n}^{(v)}$ and $B_v(\tau_{a_n}^{(v)})$. Indeed, such a process has fewer vertices that it can spread to compared to the process that starts from $w$ but can spread on all of $\T_n^d$. Hence, it takes a longer time to reach $a_n$ many vertices, so that $\tau_{a_n}^{(w)}\preceq \wt\tau_{a_n}^{(w)}$. In a similar manner, we let $\wt \Lambda_j^{(V)}$ denote the rate at which this infection reaches the next vertex, given that the infection consists of $j$ vertices. We now have, with $(E^{(w)}_j)_{j\in\N}$ a sequence of i.i.d.\ rate-one exponential random variables (also independent of the $(E_j^{(v)})_j$), the lower bound 
	\be 
	\wt \Lambda_j^{(V)}\geq j(R_n^\ominus)^{-1}(R_n^\oplus-R^\oplus_{|V|+a_n+(j-1)})\geq jZ_n(1- d_n),
	\ee 
	almost surely. We thus have the almost sure upper bound 
	\be \label{eq:tautildeub}
	\wt \tau_{a_n}^{(w)}=\sum_{j=1}^{a_n-1}\frac{E_j^{(w)}}{\wt \Lambda_j^{(V)}}\leq \frac{1}{Z_n(1- d_n)}\sum_{j=1}^{a_n-1}\frac{E_j^{(w)}}{j}.
	\ee 
	Crucially, we observe that this upper bound is independent of $\tau_{a_n}^{(v)}$ and $B_v(\tau_{a_n}^{(v)})$. We combine this with~\eqref{eq:tauub} to obtain 
	\be\ba 
	\mathbb P\bigg(\tau_{a_n}^{(v)}+\tau_{a_n}^{(w)}\geq \Big(1+\frac{\delta_n}{2}\Big)\frac{\log n}{Z_n}\bigg)&\leq \P{\tau_{a_n}^{(v)}+\wt\tau_{a_n}^{(w)}\geq \Big(1+\frac{\delta_n}{2}\Big)\frac{\log n}{Z_n}}\\ 
	&\leq \P{\sum_{j=1}^{a_n-1}\frac{E_j^{(v)}+E_j^{(w)}}{j}\geq \Big(1+\frac{\delta_n}{2}\Big)(1-d_n)\log n}.
	\ea\ee 
	Chebyshev's inequality with $s\in(0,1)$ then yields 
	\be\ba 
	\mathbb P\bigg(\tau_{a_n}^{(v)}+\tau_{a_n}^{(w)}\geq \Big(1+\frac{\delta_n}{2}\Big)\frac{\log n}{Z_n}\bigg)&\leq \exp\Big(-s\Big(1 +\frac{\delta_n}{2}\Big)(1-d_n)\log n\Big)\mathbb E\bigg[\exp\bigg(\sum_{j=1}^{a_n-1} \frac sj E_j^{(v)}\bigg)\bigg]^2\\
	&\leq \exp\Big(-s\Big(1 +\frac{\delta_n}{2}\Big)(1-d_n)\log n\Big)\bigg(\prod_{j=1}^{a_n-1}\frac{j}{j-s}\bigg)^2.
	\ea\ee 
	We bound the product from above by 
	\be 
	\prod_{j=1}^{a_n-1}\frac{j}{j-s}\leq\exp\bigg(s\sum_{j=1}^{a_n-1}\frac{1}{j-s}\bigg)=\exp(s\log(a_n)+\cO(1))=\exp\Big(\frac s2\log n+\cO(1)\Big).
	\ee 
	Furthermore, we use~\eqref{eq:Ralphan} to obtain 
	\be 
	d_n=(1+o(1))\Big(\frac{3a_n}{n}\Big)^{1-\alpha_\oplus/d}\leq 3 n^{-(1-\alpha_\oplus/d)/2}=o(\delta_n), 
	\ee 
	where the last step holds since $\sup_{n\in\N}\alpha_\oplus(n)<d$ and $\delta_n\log n\to\infty$. 
	Combined, we thus arrive at 
	\be \ba \label{eq:caseibound}
	\P{\tau_{a_n}^{(v)}+\tau_{a_n}^{(w)}\geq \Big(1+\frac{\delta_n}{2}\Big)\frac{\log n}{Z_n}}&\leq \exp\Big(s \log n-s\Big(1 +\frac{\delta_n}{2}\Big)(1-d_n)\log n +\cO(1)\Big)\\
	&\leq \exp\Big(-s \delta_n\log (n)\Big[\frac12-\frac{d_n}{\delta_n}-\frac{d_n}{2}\Big]+\cO(1)\Big). 
	\ea \ee 
	Since $d_n=o(\delta_n)$ and $d_n=o(1)$, it follows that the terms within the square brackets converge to $1/2$. As we can choose $s$ arbitrarily close to $1$ and since $\delta_n \log n$ diverges with $n$, we obtain that the probability on the left-hand side is at most $\exp(-\kappa \delta_n\log n)$ for any constant $\kappa\in(0,1/2)$ when $n$ is large enough. Finally, we combine this with~\eqref{eq:minedgebound} and~\eqref{eq:probsplit} to obtain desired upper bound in~\eqref{eq:unifapply} and proves~\eqref{eq:unifbound}.
	
	A similar approach, replacing $t_n$ by $2t_n$ and using this in~\eqref{eq:maxapply} (where we recall that now $\delta_n\equiv \delta $ is fixed), yields
	\be \ba 
	\mathbb P\bigg(\max_{\substack{w\in\T_n^d\setminus V\\ w\neq v}}X_{v,w}^{\oplus,(V)}\geq 2t_n\bigg)\leq{}& \exp \Big(\big[(1+s)-s(2+\delta)(1-d_n)\big]\log n+\cO(1)\Big)\\
	& +\exp\Big(\big[1-c^2C'\delta\big]\log n\Big).
	\ea \ee 
	Since $d_n=o(1)$, we can choose $s=s(\delta)\in(0,1)$ close enough to one such that the first term in the upper bound tends to zero. For the second term we can choose $c$ large enough such that the terms in the brackets tend to zero, which yields~\eqref{eq:maxbound} and concludes the proof.
\end{proof}

With this result at hand, we prove Theorem \ref{thrm:nocoex}~\ref{case:1}.

\begin{proof}[Proof of Theorem~\ref{thrm:nocoex}~\ref{case:1}]
	Fix two distinct vertices $v^\ominus,v^\oplus\in\T_n^d$, set $V=\{v^\ominus\}$, and start the $\square$ infection from $v^\square$ for $\square\in\{\ominus, \oplus\}$. For $\beta>2$ fixed, we introduce the following two events:
	\be \label{eq:winevents}
	\cE_n^\ominus:=\Big\{\min_{\substack{u\in\T_n^d\\ u\neq v^\ominus,v^\oplus}}T_{v^\ominus,u}^\ominus>\frac{\beta\log n}{Z_n}\Big\}, \quad\text{and}\quad \cE_n^\oplus:=\Big\{\max_{\substack{u\in\T_n^d\\ u\neq v^\ominus,v^\oplus}}X_{v^\oplus,u}^{\oplus,(V)}<\frac{\beta\log n}{Z_n}\Big\}.
	\ee 
	In words, $\cE_n^\ominus$ states that it takes the $\ominus$ infection more than $\beta\log n/Z_n$ amount of time to traverse any edge incident to $v^\ominus$, and thus the $\ominus$ infection reaches its first vertex after time $\beta\log n/Z_n$. On the other hand, $\cE_n^\oplus$ states that the maximal weighted distance from $v^\oplus$ in $\T_n^d\setminus V=\T_n^d\setminus\{v^\ominus\}$ among all other vertices (not including $v^\ominus$) for the $\oplus$ infection is at most $\beta\log n/Z_n$. As a result, the event $\cE_n^\ominus\cap \cE_n^\oplus$ implies that the $\oplus$ infection reaches \emph{all} vertices in $\T_n^d\setminus\{v^\ominus\}$ before the $\ominus$ infection spreads to any vertex. Hence, it suffices to prove that the probability of $\cE_n^\ominus\cap\cE_n^\oplus$ converges to one to prove Case~\ref{case:1}. 
	
	We show that both events have a probability converging to one. For the second event, this follows directly from~\eqref{eq:maxbound} in Proposition~\ref{prop:uniftime}, when we set $V=\{v^\ominus\}$. For the event $\cE_n^\ominus$ we use that the minimal edge-weight among $(T_{v^\ominus,u}^\ominus)_{u\in\T_n^d\setminus\{v^\ominus,v^\oplus\}}$ is an exponential random variable with rate $1-\|v^\ominus-v^\oplus\|^{-\alpha_\ominus}/R_n^\ominus\leq 1$, to arrive at 
	\be \label{eq:En1}
	\P{\cE_n^\ominus}=\exp\Big(-\big(1-\|v^\ominus-v^\oplus\|^{-\alpha_\ominus}/R_n^\ominus\big)\frac{\beta\log n}{Z_n}\Big)\geq\exp\Big(-\frac{\beta\log n}{Z_n}\Big).
	\ee 
	If $c_n/(\log n)^2$ diverges or, equivalently, $Z_n/\log n$ diverges, it follows that $\mathbb P(\cE_n^\ominus)=1-o(1)$, which yields the desired result.
\end{proof}

\subsection{The winner takes all but finitely many}\label{sec:winnerfinite}

In Section~\ref{sec:winnerone}, we saw that the proof of Theorem \ref{thrm:nocoex}~\ref{case:1} is based on the fact that the $\oplus$ infection can spread to all vertices within time $\beta\log(n)/Z_n$ with high probability for any $\beta>2$, since $c_n\gg (\log n)^2$ and thus $Z_n\gg \log n$. In this section, we discuss Theorem \ref{thrm:nocoex}~\ref{case:fin}, where $c_n=(c+o(1))(\log n)^2$ and thus $Z_n=(c+o(1))\log n$. In this case, it is not sufficient to run the process up to time $2\log(n)/Z_n$. Instead, the aim is to run the competition process up to time $t_n=(1+\delta_n)\log(n)/Z_n$, for some small $\delta_n$, and show that by this time the $\oplus$ infection has reached all but a negligible proportion of the vertices, whilst the approximation of the slower $\ominus$ infection by the corresponding branching random walk $\cX_n^\ominus$ up to time $t_n$ is in fact \emph{exact} with high probability. A P\'olya urn argument, similar in spirit to that in the proof of Lemma~\ref{lemma:phase2}, is used to show that after time $t_n$ the $\oplus$ infection reaches \emph{all} remaining uninfected vertices before the $\ominus$ infection can make any more infections, so that the final size $N_n^\ominus$ is equal to the size of the branching random walk $\cX_n^\ominus$ at time $t_n\approx 1/c$. Combined with Lemma~\ref{lem:size_CTBP}, this yields the desired distributional limit.

\begin{proof}[Proof of Theorem~\ref{thrm:nocoex}~\ref{case:fin}]
	We let $(\delta_n)_{n\in\N}$ be a non-negative sequence such that $\delta_n\downarrow 0$ and $\delta_n\log n\to\infty$ as $n\to\infty$. It follows that $n^{1-\delta_n}=o(n)$. We also define $s_n:=(1-\delta_n)\log(n)/Z_n$ and $t_n:=(1+\delta_n)\log(n)/Z_n$. Finally, we define the events
	\be \ba \label{eq:events}
	\cE_{1,n} &:=\{\cO_n^\ominus(s_n)=\cX_n^\ominus(s_n)=\cX_n^\ominus(t_n), |\cX_n^\ominus(t_n)|\leq \log n\},\\
	\cE_{2,n}&:=\{|\cO_n^\oplus(s_n)|\leq n^{1-\delta_n/2},  |\cO_n^\oplus(t_n)|\geq n-n^{1-\xi \delta_n}\},\\
	\cE_{3,n}&:=\{\cO_n^\ominus(T_{\mathrm{cov}})=\cX_n^\ominus(t_n)\},
	\ea\ee 
	where $\xi>0$ is small and to be determined later, we recall that $T_{\mathrm{cov}}$ is the  time when all vertices of the torus are infected, that $(\cO_n^\ominus(t),\cO_n^\oplus(t))_{t\geq 0}$ is introduced in the coupling in Section~\ref{sec:coupling}, and Proposition~\ref{prop:coup_LRC_CTBRW} yields that $(\cO_n^\ominus(t),\cO_n^\oplus(t))_{t\geq 0}\overset\dd= \cN_n^\ominus(t),\cN_n^\oplus(t))_{t\geq 0}$. The aim is to prove that $\P{\cE_{3,n}}$ tends to $1$ with $n$. Indeed, this is sufficient, since $t_n\to 1/c$ when $Z_n=(c+o(1))\log n$ implies that $|\cX_n^\ominus(t_n)|=|\BP^\ominus(t_n)|\toindis |\BP^\ominus(1/c)|$, where we recall $\BP^\ominus$ from the start of Section~\ref{sec:coupl}. As a result, we have  $N_n^\ominus=|\cN_n^\ominus(T_{\mathrm{cov}})|\toindis |\BP^\ominus(1/c)|$. The latter limiting random variable is equal in distribution to a geometric random variable with success probability $p=\e^{-1/c}$, as desired, which follows from Lemma~\ref{lem:size_CTBP}.
	
	Let us start by observing that the event $\cE_{1,n}$ implies that $\cN_n^\ominus(s_n)=\cN_n^\ominus(t_n)=\BP^\ominus(t_n)$, whereas the event $\cE_{2,n}$ implies that, by time $s_n$, there are still uninfected vertices, and by time $t_n$ the $\oplus$ infection has reached almost all vertices. Hence, on the event $\cE_{1,n}\cap\cE_{2,n}$, the event $\cE_{3,n}$ holds when all uninfected vertices after time $t_n$ (if any) are reached by the $\oplus$ infection. We start by proving that $\cE_{1,n}\cap \cE_{2,n}$ holds with high probability, and then turn to the event $\cE_{3,n}$. 
	
	To prove that $\P{\cE_{1,n}}$ tends to one, we recall that 
	\be 
	\cX_n^\ominus(s_n)=\cO_n^\ominus(s_n)\cup \cD_n^\ominus(s_n), 
	\ee 
	so that the first equality in the event $\cE_{1,n}$ holds when $| \cD^\ominus(s_n)|=0$. We use Proposition~\ref{prop:gen_size_artificial_CTBP_tail_bound} to show this holds with high probability. Further, the second equality in the event $\cE_{1,n}$ is implied if no particle in $\cX_n^\ominus(s_n)$ gives birth to a child in the time interval $(s_n,t_n]$. As a result, by a union bound and Markov's inequality, 
	\be \ba \label{eq:E1nbound}
	\P{\cE_{1,n}^c}\leq{}& \P{\cD_n^\ominus(s_n)\geq 1}+\P{\exists v\in \cX_n(s_n): v\text{ has a child in }(s_n,t_n]}\\
	&+\P{|\cX_n^\ominus(t_n)|>\log n}\\ 
	\leq{}& \P{\cD_n^\ominus(s_n)\geq 1}+\E{|\cX_n^\ominus(s_n)|}\big(1-\e^{-(t_n-s_n)}\big)+\frac{\E{|\cX_n^\ominus(t_n)|}}{\log n}.
	\ea \ee  
	The probability on the right-hand side tends to zero by Proposition~\ref{prop:gen_size_artificial_CTBP_tail_bound}, which states that 
	\be 
	\cD_n^\ominus(s_n)\e^{-s_n}\toinp 0. 
	\ee 
	Since $Z_n=(c+o(1))\log n$ for some $c>0$, so that $s_n$ converges to $1/c$, it in fact holds that $\cD_n^\ominus(s_n)\toinp 0$. The two expected values equal $\e^{s_n}$ and $\e^{t_n}$, respectively, by Lemma~\ref{lem:size_CTBP}, as it follows from the definition of the branching random walk $\cX_n^\ominus$ that $|\cX_n^\ominus|=|\BP^\ominus|$. Both $\e^{s_n}$ and $\e^{t_n}$ converge to $\e^{1/c}$. Furthermore, as $t_n-s_n=2\delta_n\log(n)/Z_n=o(1)$, the entire right-hand side of~\eqref{eq:E1nbound} tends to zero with $n$.
	
	We then show that $\P{\cE_{2,n}^c}$ vanishes with $n$. First, we use that $|\BP^\oplus(s_n)|\e^{-Z_ns_n}$ converges almost surely (as $Z_ns_n$ diverges with $n$) by Lemma~\ref{lem:size_CTBP}. As a result, 
	\be \label{eq:E2prob1}
	|\cO_n^\oplus(s_n)|\leq |\cX_n^\oplus(s_n)|=|\BP^\oplus(s_n)|\leq \e^{Z_ns_n+\delta_n \log(n)/2}=n^{1-\delta_n/2},
	\ee 
	with high probability. Now, we prove that $|\cO^\oplus(t_n)|\geq n-n^{1-\xi\delta_n}$ holds with probability tending to one for any $\xi$ sufficiently small. Here, we use that $\cO_n^\oplus(t)\overset \dd= \cN_n^\oplus(t)$ for all $t\geq0$ and apply the following approach. We abuse notation to let $(\cX_n^\ominus(t))_{t\in[0,t_n]}$ denote the set of vertices occupied by the branching random walk $\cX_n^\ominus$ during the interval $[0,t_n]$ (i.e.\ we omit the label of the particles in the Ulam-Harris tree $\cU_\infty$ and their mark and keep only their location in $\T_n^d$). By definition, we have $\cX_n^\ominus(t)\supseteq \cN_n^\ominus(t)$ for all $t\in[0,t_n]$. Crucially, we have that $( \cX_n^\ominus(t))_{t\in [0,t_n]}$ is \emph{independent} of $(\cN_n^\oplus(t))_{t\in[0,t_n]}$. 
	
	We then let $(\wt \cN_n^\oplus(t))_{t\in[0,t_n]}$ denote the set of infected vertices when we run a \emph{single} $\oplus$ infection on $\T_n^d\setminus \cX_n^\ominus(t_n)$ for $t_n$ amount of time. Again, since $\cX_n^\ominus(t_n)\supseteq \cN_n^\ominus(t_n)\supseteq \cN_n^\ominus(t)$ for all $t\in[0,t_n]$, it follows that conditionally on $\cX_n^\ominus(t_n)$ we can couple $(\cN_n^\oplus(t))_{t\in[0,t_n]}$ and $(\wt \cN_n^\oplus(t))_{t\in[0,t_n]}$ such that $\wt \cN_n^\oplus(t)\subseteq \cN_n^\oplus(t)$ for all $t\in[0,t_n]$, almost surely. Indeed, discard any infections made from vertices in $\T_n^d\setminus\cX^\ominus(t_n)$ to vertices in $\cX^\ominus(t_n)$ during the time interval $[0,t_n]$ to obtain $(\wt \cN_n^\oplus(t))_{t\in[0,t_n]}$. We  define the event $\wt \cE_{1,n}:=\{|\cX_n^\ominus(t_n)|\leq \log n\}$. We thus have 
	\be
	\mathbb P\Big(\big\{|\cN_n^\oplus(t_n)|<n-n^{1-\xi\delta_n}\big\} \cap\wt\cE_{1,n}\Big)\leq \mathbb E\Big[\indicwo{\wt \cE_{1,n}}\!\!\mathbb P\Big(|\wt \cN_n^\oplus(t_n)|<n-n^{1-\xi\delta_n}\,\Big|\,  \cX_n^\ominus(t_n) \Big)\Big].  
	\ee 
	We can bound the expected value from above by 
	\be 
	\sup_{\substack{D\subset \T_n^d\\ |D|\leq \log n}} \P{|\wt\cN_n^\oplus(t_n)|<n-n^{1-\xi\delta_n}\,\Big|\, \cX_n^\ominus(t_n)=D}. 
	\ee  
	Then, using the distance metric in~\eqref{eq:distV}, we can write 
	\be \ba 
	\P{|\wt\cN_n^\oplus(t_n)|<n-n^{1-\xi\delta_n}\,\Big|\, \cX_n^\ominus(t_n)=D}&=\P{\sum_{u\in \T_n^d\setminus D}\!\!\!\!\ind{X_{v^\oplus,u}^{\oplus,(D)}\leq t_n}<n-n^{1-\xi\delta_n}}\\ 
	&=\P{\sum_{u\in \T_n^d\setminus D}\!\!\!\!\ind{X_{v^\oplus,u}^{\oplus,(D)}>t_n}> n^{1-\xi\delta_n}-|D|}.
	\ea\ee 
	Here, we note that $n^{1-\xi\delta_n}-|D|$ tends to infinity with $n$, since $\delta_n=o(1)$ and $|D|\leq \log n$. By Markov's inequality, we then obtain the upper bound 
	\be 
	\P{\sum_{u\in \T_n^d\setminus D}\!\!\!\!\ind{X_{v^\oplus,u}^{\oplus,(D)}>t_n}> n^{1-\xi\delta_n}-|D|}\leq \frac{n-|D|}{n^{1-\xi\delta_n}-|D|}\P{X_{v^\oplus,U}^{(2),(D)}\geq t_n}, 
	\ee 
	where $U$ is a vertex selected uniformly at random from $\T_n^d\setminus D$. Using Lemma~\ref{prop:uniftime}, the probability can be bounded from above, uniformly in the set $D$ and the vertex $v^\oplus$, by $\e^{-K\delta_n \log n}$ for some $K>0$. The fraction $\frac{n-|D|}{n^{1-\xi\delta_n}-|D|}$ equals $(1+o(1))n^{\xi \delta_n}$, uniformly in the choice of $D$, so that choosing $\xi<K$ yields that 
	\be \label{eq:E2prob2}
	\mathbb P\Big(\big\{|\cN_n^\oplus(t_n)|<n-n^{1-\xi\delta_n}\big\} \cap\wt\cE_{1,n}\Big)=o(1).
	\ee 
	We also note that $\wt\cE_{1,n}\supseteq \cE_{1,n}$, so that $\mathbb P(\wt \cE_{1,n}^c)=o(1)$.  Combining this with~\eqref{eq:E2prob1} and~\eqref{eq:E2prob2}, we obtain that $\P{\cE_{2,n}^c}=o(1)$, as desired. 
	
	Finally, we turn to $\cE_{3,n}$. Recall the discrete-time description of the LRC process introduced prior to the proof of Lemma~\ref{lemma:phase2}. As discussed after~\eqref{eq:events}, the event $\cE_{3,n}\cap \cE_{2,n}\cap \cE_{1,n}$ implies that all vertices that remain uninfected at time $t_n$ (if any) are all reached by the $\oplus$ infection. We prove that $\P{\cE_{3,n}\cap\cE_{2,n}\cap\cE_{1,n}}$ tends to one using the discrete version of the process. We define
	\be 
	\cC_n:=\{|\cN_n^\ominus(t_n)|\leq \log n,\  |\cN_n^\oplus(t_n)|\geq n-n^{1-\xi\delta_n} \},
	\ee 
	and note that $ \cE_{1,n}\cap \cE_{2,n}\subseteq \cC_n$ (under the coupling of the LRC process to the branching random walks), as follows from~\eqref{eq:events}. We then bound
	\be\ba \label{eq:allremainprob}
	\P{\cE_{3,n}}&\geq \P{\cE_{3,n}\cap \cC_n}\\
	&=\mathbb P(\{\text{$\oplus$ infects all remaining }n-|\cN_n^\ominus(t_n)|-|\cN_n^\oplus(t_n)|\text{ vertices}\}\cap \cC_n).
	\ea\ee 
	We write $A_0:=\cN_n^\ominus(t_n)$ and let $B_i$ be the set of vertices infected by $\oplus$ when there are exactly $|\cN_n^\ominus(t_n)|+|\cN_n^\oplus(t_n)|+i$ many vertices infected, for $i\in\{0,\ldots, n-|\cN_n^\ominus(t_n)|-|\cN_n^\oplus(t_n)|\}$. In particular, $B_0=\cN_n^\oplus(t_n)$. We also set $\ell_n:=n-|\cN_n^\ominus(t_n)|-|\cN_n^\oplus(t_n)|=n-|A_0|-|B_0|$ and define the events $\cS_i:=\{|B_i|=|\cN_n^\oplus(t_n)|+i\}$ for $i\in[\ell_n]$. We can then write 
	\be\label{eq:polyaproball}
	\mathbb P(\{\text{$\oplus$ infects all remaining }\ell_n\text{ vertices}\}\cap \cC_n)=\P{\cC_n\cap\Big(\bigcap_{i=1}^{\ell_n}\cS_i\Big)}=\E{\indicwo{\cC_n}\P{\bigcap_{i=1}^{\ell_n}\cS_i\,\Bigg|\, A_0,B_0}}.
	\ee 
	We then bound 
	\be 
	\P{\bigcap_{i=1}^{\ell_n}\cS_i\,\Bigg|\, A_0,B_0}=\prod_{i=1}^{\ell_n}\P{\cS_i\,\big| A_0,B_0,(\cS_j)_{j<i}}\geq 1-\sum_{i=1}^{\ell_n}\P{\cS_i^c\,\big| A_0,B_0,(\cS_j)_{j<i}}.
	\ee 
	Each conditional probability in the sum can be written as 
	\be 
	\P{\cS_i^c\,\big| A_0,B_0,(\cS_j)_{j<i}}=\E{\frac{\sum_{v\in A_0}\sum_{u\in (A_0\cup B_{i-1})^c}\|u-v\|^{-\alpha_\ominus}}{\sum_{v\in B_{i-1}}\sum_{u\in (A_0\cup B_{i-1})^c}\lambda \|u-v\|^{-\alpha_\oplus}}\,\Bigg|\, A_0,B_0,(\cS_j)_{j<i}}.
	\ee 
	By conditioning on the events $(\cS_j)_{j<i}$, we have that $|B_{i-1}^c|=n-(|B_0|+(i-1))=\ell_n+|A_0|-(i-1)$ and $|(A_0\cup B_{i-1})^c|=n-|A_0|-|B_{i-1}|=\ell_n-(i-1)$. Using Lemma~\ref{lemma:rateest}, we can then bound, 
	\begin{align}
		\sum_{v\in A_0}\sum_{u\in (A_0\cup B_{i-1})^c}\|u-v\|^{-\alpha_\ominus}&\leq |A_0|R^\ominus_{|(A_0\cup B_{i-1})^c|} =|A_0|R^\ominus_{\ell_n-(i-1)},
		\shortintertext{and}
		\sum_{v\in B_{i-1}}\sum_{u\in (A_0\cup B_{i-1})^c}\lambda \|u-v\|^{-\alpha_\oplus}&\geq |(A_0\cup B_{i-1})^c|(R_n^\oplus-R^\oplus_{|B_{i-1}^c|})\\ 
		&=(\ell_n-(i-1))(R_n^\oplus-R^\oplus_{\ell_n+|A_0|-(i-1)}).
	\end{align}
	This yields
	\be 
	\mathbb P\Bigg(\bigcap_{i=1}^{\ell_n}\cS_i\,\Bigg|\, A_0,B_0\Bigg)
	\geq 1-\sum_{i=1}^{\ell_n}\frac{|A_0|R^\ominus_{\ell_n-(i-1)}}{(\ell_n-(i-1))(R_n^\oplus-R^\oplus_{\ell_n+|A_0|-(i-1)})}.
	\ee 
	On the event $\cC_n$, we can bound $|A_0|\leq \log n$ and $\ell_n=n-|A_0|-|B_0|\leq n-|B_0|\leq n^{1-\xi\delta_n}$. As $R_n^\oplus$ is increasing in $n$, we obtain by setting $j=\ell_n-(i-1)$ the lower bound 
	\be 
	\E{\indicwo{\cC_n}\P{\bigcap_{i=1}^{\ell_n}\cS_i\,\Bigg|\, A_0,B_0}}\geq \P{\cC_n}\bigg(1-\sum_{j=1}^{n^{1-\xi\delta_n}}\frac{\log(n)R^\ominus_j}{j(R^\oplus_n-R^\oplus_{j+\log n})}\bigg).
	\ee 
	Since $\delta_n$ is such that $\delta_n\log n$ diverges, it follows that $n^{1-\xi\delta_n}+\log n=o(n)$, for any $\xi>0$. Hence, $R^\oplus_n-R^\oplus_{i+\log n}=R_n^\oplus(1-o(1))=Z_nR_n^\ominus(1-o(1))$. Moreover, we can bound $\underline c j^{1-\alpha_\ominus/d}\leq R_j^\ominus\leq \overline c j^{1-\alpha_\ominus/d}$ for some constants $\underline c,\overline c>0$, which are independent of $j$, using Lemma~\ref{lemma:ratesupinf}. As a result, we have the lower bound
	\be 
	\P{\cC_n}\bigg(1-\frac{(\overline c+o(1))\log n}{Z_n \underline c n^{1-\alpha_\ominus/d}}\sum_{i=1}^{n^{1-\xi\delta_n}}i^{-\alpha_\ominus/d}\bigg)\geq \P{\cC_n}\Big(1-C\frac{\log n}{Z_n}n^{-(1-\alpha_\ominus/d)\xi\delta_n}\Big), 
	\ee 
	for some sufficiently large constant $C>0$.  Since $\sup_{n\in\N}\alpha_\ominus(n)<d$, $\delta_n\log n$ diverges, and $Z_n=(c+o(1))\log n$, we find that this lower bound equals $\P{\cC_n}(1-o(1))$. Since $\cE_{1,n}\cap \cE_{2,n}\subseteq \cC_n$ and we already proved that $\P{\cE_{1,n}\cap \cE_{2,n}}=1-o(1)$, we thus finally use~\eqref{eq:allremainprob} to derive that $\P{\cE_{3,n}}=1-o(1)$, which concludes the proof.
\end{proof}

\subsection{Lower bound for $N_n^\ominus$}\label{sec:lb}

In this section we prove a lower bound for the size of the $\ominus$ infection in Theorem~\ref{thrm:nocoex}~\ref{case:liminf} and~\ref{case:div}. This is summarised in the following proposition.

\begin{proposition}\label{prop:lbinfection}
	Fix $d\in\N$, let $\alpha_\ominus,\alpha_\oplus\in[0,d)$ such that $\sup_{n\in\N}\alpha_\square(n)<d$ for $\square\in\{\ominus, \oplus\}$, and recall $Z_n$ from~\eqref{eq:Zn}. Let $N_n^\ominus$ denote the final size of the $\ominus$ infection of an LRC process, started from two distinct initial vertices $v^\ominus,v^\oplus\in\T_n^d$. Fix $\xi\in(0,1)$ and define $\eps_n:=\xi(\frac{c_n}{\log n}\wedge 1)$. When $c_n=o((\log n)^2)$ and $c_n$ diverges, the event $\{N_n^\ominus\geq n^{(1-(1+\xi)\eps_n)/Z_n}\}$ holds with high probability.
\end{proposition}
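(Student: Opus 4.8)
\textbf{Proof plan for Proposition~\ref{prop:lbinfection}.}

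The plan is to use the coupling of the Type-$1$ infection to a rate-one Yule process $\BP^{(1)}$ described in Section~\ref{sec:coupling}, together with the control of the coupling defect in Proposition~\ref{prop:gen_size_artificial_CTBP_tail_bound}, to obtain a lower bound on $N_n^{(1)}$ of the claimed order. Recall that $|\cN_n^{(1)}(t)| = |\BP^{(1)}(t)| - C^{(1)}((0,t])$, so a lower bound on $|\cN_n^{(1)}(t)|$ follows from a lower bound on the Yule process $|\BP^{(1)}(t)|$ minus an upper bound on the defect $C^{(1)}((0,t])$. The natural time at which to evaluate this is $t_n := (1-\delta_n)\log(n)/Z_n$ for a suitable $\delta_n$, chosen slightly larger than $\eps_n$; concretely I would take $\delta_n = (1+\tfrac{\xi}{2})\eps_n$, which satisfies $\limsup_n \delta_n < 1$ (since $\eps_n \le \xi < 1$) and $\delta_n \log n \to \infty$ (since $\eps_n \log n = \xi(c_n \wedge \log n) \to \infty$ because $c_n$ diverges), so that Proposition~\ref{prop:gen_size_artificial_CTBP_tail_bound}, equation~\eqref{eq:Csublin}, applies.

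The first step is to bound the Yule process from below: by Lemma~\ref{lem:size_CTBP}(2), for any $\eta>0$ there is $A=A(\eta)$ with $\P{\inf_{t>0}|\BP^{(1)}(t)|\e^{-t} \ge A^{-1}} \ge 1-\eta$, hence with probability at least $1-\eta$ we have $|\BP^{(1)}(t_n)| \ge A^{-1}\e^{t_n} = A^{-1} n^{(1-\delta_n)/Z_n}$. The second step is to control the defect: applying~\eqref{eq:Csublin} with this choice of $\delta_n$ gives $C^{(1)}((0,t_n])\,\e^{-t_n} \toinp 0$, so $C^{(1)}((0,t_n]) = o_{\pr}(n^{(1-\delta_n)/Z_n})$. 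Combining, with probability tending to at least $1-\eta$,
\be
N_n^{(1)} \ge |\cN_n^{(1)}(t_n)| = |\BP^{(1)}(t_n)| - C^{(1)}((0,t_n]) \ge \tfrac{1}{2}A^{-1} n^{(1-\delta_n)/Z_n}
\ee
for $n$ large. The third step is to check this exceeds $n^{(1-(1+\xi)\eps_n)/Z_n}$ with high probability. Since $\delta_n = (1+\tfrac\xi2)\eps_n < (1+\xi)\eps_n$, we have $n^{(1-\delta_n)/Z_n}/n^{(1-(1+\xi)\eps_n)/Z_n} = n^{(\tfrac\xi2)\eps_n/Z_n}$; because $Z_n = 1 + c_n/\log n$ and $\eps_n\log n = \xi(c_n\wedge\log n)$, the exponent $(\tfrac\xi2)\eps_n/Z_n \cdot \log n = \tfrac\xi2 \eps_n \log n / Z_n$ diverges (one checks $\eps_n\log n \to\infty$ while $Z_n$ stays bounded when $c_n = o((\log n)^2)$, and even if $Z_n\to\infty$ the ratio still diverges since $c_n = o((\log n)^2)$ forces $\eps_n\log n/Z_n = \xi \min\{c_n/Z_n, \log n/Z_n\}$ to diverge — here $c_n/Z_n = c_n\log n/(\log n + c_n) \to \infty$ and $\log n/Z_n \ge \log n/(1+\log n) \cdot$, wait, one must use $Z_n = o(\log n)$, which holds precisely because $c_n = o((\log n)^2)$). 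Hence $\tfrac12 A^{-1} n^{(1-\delta_n)/Z_n} \ge n^{(1-(1+\xi)\eps_n)/Z_n}$ for $n$ large, deterministically on the good events, giving $\P{N_n^{(1)} \ge n^{(1-(1+\xi)\eps_n)/Z_n}} \ge 1-2\eta$ eventually; since $\eta>0$ is arbitrary, this proves the claim.

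The only real subtlety — and the step I would write most carefully — is the bookkeeping around $Z_n$ in the last paragraph: one must confirm that $c_n = o((\log n)^2)$ implies $Z_n = o(\log n)$, so that the gap exponent $\tfrac\xi2\eps_n\log n/Z_n$ genuinely diverges in all sub-cases (whether $c_n$ stays bounded, or $c_n\to\infty$ with $c_n = o(\log n)$, or $\log n \ll c_n \ll (\log n)^2$). In each case $\eps_n\log n = \xi(c_n\wedge\log n)$ and dividing by $Z_n = 1 + c_n/\log n$ one gets a quantity that tends to infinity; this is a short but necessary case check. Everything else is a direct assembly of Lemma~\ref{lem:size_CTBP}(2) and Proposition~\ref{prop:gen_size_artificial_CTBP_tail_bound}, both already established.
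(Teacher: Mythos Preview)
Your approach is exactly the paper's: evaluate the coupling at $t_n=(1-\delta_n)\log(n)/Z_n$, lower-bound $|\BP^{(1)}(t_n)|$ via Lemma~\ref{lem:size_CTBP}, upper-bound the defect via~\eqref{eq:Csublin}, and check that the resulting lower bound exceeds $n^{(1-(1+\xi)\eps_n)/Z_n}$ because the gap exponent diverges. The paper in fact takes $\delta_n=\eps_n$ itself and uses the almost sure limit $\e^{-t_n}|\BP^{(1)}(t_n)|\to E_1$ (valid since $t_n\to\infty$), then notes $n^{(1-(1+\xi)\eps_n)/Z_n}=\e^{t_n}n^{-\xi\eps_n/Z_n}=o(\e^{t_n})$; your version with the infimum bound works just as well.

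There is one small slip to fix. You claim $\limsup_n\delta_n<1$ ``since $\eps_n\le\xi<1$'', but with $\delta_n=(1+\tfrac{\xi}{2})\eps_n$ and $\eps_n=\xi$ (which occurs whenever $c_n\ge\log n$), you get $\delta_n=(1+\tfrac{\xi}{2})\xi$, and this exceeds $1$ once $\xi>\sqrt{3}-1\approx0.732$. So for $\xi$ in that range the hypothesis of~\eqref{eq:Csublin} fails as written. The fix is trivial: take $\delta_n=\eps_n$ (the paper's choice), which always satisfies $\limsup_n\delta_n\le\xi<1$ and $\delta_n\log n\to\infty$, and still leaves a gap exponent $\xi\eps_n/Z_n$ that diverges after multiplication by $\log n$, by the same case check you already wrote out.
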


\begin{proof}
	We couple the LRC process with two branching random walks, as in Section~\ref{sec:coupling}. We define $t_n:=(1-\eps_n)\log( n)/Z_n$, and 
	\be 
	\cA_n^\ominus:=\{|\cO_n^\ominus(t_n)|\geq n^{(1-(1+\xi)\eps_n)/Z_n}\}.
	\ee 
	$\cA_n^\ominus$ implies the result, since the size of the $\ominus$ infection is monotone increasing in time, so that $N_n^\ominus\geq |\cO_n^\ominus(t_n)|\geq n^{(1-(1+\xi)\eps_n)/Z_n}$. We thus show that $\P{\cA_n^\ominus}\to1$.
	
	Since $\limsup_{n\to\infty}\eps_n<1$ and $c_n$ diverges, we can apply Proposition~\ref{prop:gen_size_artificial_CTBP_tail_bound} with $\delta_n=\eps_n$. We use~\eqref{eq:Csublin} from Proposition~\ref{prop:gen_size_artificial_CTBP_tail_bound}, which states that $\cD_n^\ominus(t_n)\e^{-t_n}\toinp 0$. Since $Z_n=o(\log n)$ (that is, $c_n=o((\log n)^2$), it follows that  $t_n$ diverges with $n$, so that 
	\be 
	\e^{-t_n}|\cX_n^\ominus(t_n)|=\e^{-t_n}|\BP^\ominus(t_n)|\toas E_1, 
	\ee 
	by Lemma~\ref{lem:size_CTBP}, where $E_1$ is a rate-one exponential random variable. Combined, we obtain
	\be 
	\e^{-t_n}|\cO_n^\ominus(t_n)|= \e^{-t_n}\big(|\cX_n^\ominus(t_n)|-\cD_n^\ominus(t_n)\big)\toinp E_1.
	\ee 
	As a result, since $c_n$ diverges but $c_n=o((\log n)^2)$, and thus
	\be 
	n^{(1-(1+\xi)\eps_n)/Z_n}=\e^{t_n}n^{-\xi\eps_n/Z_n}=\exp\Big(t_n -\xi^2\frac{(c_n\wedge \log n)\log n}{\log n+c_n}\Big)=o\big(\e^{t_n}\big), 
	\ee 
	it follows that the event $\cA_n^\ominus$ holds with high probability, which concludes the proof. 
\end{proof}

\noindent We conclude this section by proving Theorem~\ref{thrm:nocoex}.

\begin{proof}[Proof of Theorem~\ref{thrm:nocoex}]  
	
	We start by proving the most general result that, when $c_n$ diverges, it follows that $N_n^\ominus/n \toinp 0$. This directly follows from the fact that $(N_n^\ominus n^{-1/Z_n})_{n\in\N}$ is a tight sequence by Proposition~\ref{prop:infectsizes}, and 
	\be 
	n^{1/Z_n}=n^{1-(Z_n-1)/Z_n}=n\exp\Big(-\frac{c_n\log n}{\log n+c_n}\Big)=o(n), 
	\ee 
	where the final step holds since $c_n$ diverges. Hence, we have $N_n^\ominus/n\toinp 0$.
	
	We then prove the more precise results on the size of $N_n^\ominus$ in the four cases in the theorem. We note that the proofs of Cases~\ref{case:1} and~\ref{case:fin} follow from Sections~\ref{sec:winnerone} and~\ref{sec:winnerfinite}, respectively. To prove Cases~\ref{case:liminf} and~\ref{case:div}, fix $\eps>0$ and let $A=A(\eps)$ be a sufficiently large constant, such that $\P{N_n^\ominus\geq An^{1/Z_n}}<\eps$, which is possible by Proposition~\ref{prop:infectsizes}. Then, in Case~\ref{case:liminf}, with probability at least $1-\eps$, 
	\be \label{eq:logNub}
	\frac{\log(N_n^\ominus)Z_n}{\log n}\leq 1+\log(A)\frac{Z_n}{\log n}. 
	\ee 
	Since $c_n=o((\log n)^2)$ in Case~\ref{case:liminf}, so that $Z_n=o(\log n)$, it follows that the right-hand side is smaller than $1+\xi$ for $n$ sufficiently large with probability at least $1-\eps$, for any $\xi>0$. Since we can take $\eps$ arbitrarily small, independently of $\xi$, it follows that $\log(N_n^\ominus)Z_n/\log n$ is at most $1+\xi$ with high probability for any $\xi>0$. To prove a matching lower bound, we use Proposition~\ref{prop:lbinfection} to conclude that $N_n^\ominus\geq n^{(1-(1+\zeta)\eps_n)/Z_n}$, where $\eps_n:=\zeta(\frac{c_n}{\log n}\wedge 1)$, holds with high probability for any $\zeta>0$. By taking the logarithm on both sides and rearranging terms,
	\be \label{eq:Nlb}
	\log(N_n^\ominus)\frac{Z_n}{\log n} \geq 1-(1+\zeta)\eps_n. 
	\ee 
	As $\eps_n\leq \zeta$, the right-hand side is at least $1-(1+\zeta)\zeta=1-\xi$ by letting $\xi\coloneqq\zeta (1+\zeta)$. Since $\zeta$ is arbitrary, we obtain that $\log(N_n^\ominus)Z_n/\log n$ is at least $1-\xi$ with high probability  for any $\xi>0$. Combined with the upper bound, this concludes the proof of Case~\ref{case:liminf}. 
	
	In Case~\ref{case:div} we can be more precise in~\eqref{eq:logNub} by observing that $Z_n=1+c_n/\log n$, so that 
	\be
	c_n^{-1}(\log n-\log N_n^\ominus) \geq \frac{\log n}{\log n+c_n}-\frac{\log A}{c_n}. 
	\ee 
	Since in Case~\ref{case:div} $c_n$ diverges such that $c_n=o(\log n)$, it follows that the right-hand side is at least $1-\xi$ for $n$ sufficiently large. As the inequality holds with probability at least $1-\eps$, where we can make $\eps$ arbitrarily small, we obtain that $c_n^{-1}(\log n-\log N_n^\ominus)\geq 1-\xi$ with high probability for any $\xi>0$. We then rewrite~\eqref{eq:Nlb} by using that $Z_n=1+c_n/\log n$ and that $\eps_n=\zeta c_n/\log n$ for all large $n$, since $c_n=o(\log n)$. As a result, we  obtain, with high probability, 
	\be 
	c_n^{-1}(\log n-\log N_n^\ominus) \leq \frac{\log n}{\log n+c_n}(1+\zeta(1+\zeta))\leq 1+\xi,  
	\ee 
	when we again set $\xi:=\zeta(1+\zeta)$. Since $\zeta$ is arbitrary, we obtain a matching upper bound, which proves Case~\ref{case:div} and concludes the proof.
\end{proof}

\subsection{Coexistence: Proof of Theorem~\ref{thrm:coex}}\label{sec:coex}

\begin{proof}[Proof of Theorem~\ref{thrm:coex}]
	Our  aim is to check \eqref{eq:def_coex} for $\square\in\{\ominus, \oplus\}$. We use the coupling introduced in Section~\ref{sec:coupling}, and hence analyse the minimum of $|\cO_n^\ominus(t)|$ and $|\cO_n^\oplus(t)|$. The desired result is then implied by the following: There exists a function $f:(0,1)\to\R_+$ such that $\lim_{x\downarrow 0}f(x)=0$, such that for any $\varepsilon>0$ sufficiently small,
	\be\label{eq:RTP_coexistence}
	\liminf_{n \to \infty}\Prob{\min\Big\{\frac{|\cO_n^\ominus(T_n)|}{n},\frac{|\cO_n^\oplus(T_n)|}{n}\Big\}\geq f(\eps)}\geq 1-\eps,
	\ee 
	and some suitable time $T_n$. Here, we note that the $T_n$ need not be $T_{\mathrm{cov}}$ (the time when all vertices in $\T_n^d$ contain an original particle), since particles stay where they are and do not die and thus $|\cO_n^\square(t)|$ is non-decreasing in $t$. Let us fix $\varepsilon>0$ and take $m>0$. Consider the events
	\begin{align*}
		\mathcal{G}_1(m)&:=\{\inf_{t> 0}|\cX_n^\ominus(t)|\e^{-t}>m^{-1}\}\cap\{\inf_{t>0}|\cX_n^\oplus(t)|\e^{-Z_n t}>m^{-1}\}.
	\end{align*}
	By Lemma~\ref{lem:size_CTBP} and since $|\cX_n^\square(t)|=|\BP^\square(t)|$, we can choose an $M_1=M_1(\varepsilon)$ large enough so that $\Prob{\mathcal{G}_1(m)}>1-\varepsilon/2$ for all $m\geq M_1$. Further, let us fix $\delta \in (0,\inf_{n\in\N}\min\{1-\alpha_\ominus(n)/d,1-\alpha_\oplus(n)/d\})$. By using Proposition~\ref{prop:gen_size_artificial_CTBP_tail_bound}, we can choose $M_2=M_2(\varepsilon,\delta,C)$ large enough so that for all $m\geq M_2$, 
	\be 
	\cG_2(m):=\bigcap_{\square\in\{\ominus, \oplus\}}\Big\{\cD_n^\ominus\Big( \frac{\log n-m}{Z_n}\Big)\leq n\e^{-m(1+\delta)}\Big\} 
	\ee 
	holds with probability at least $1-\eps/2$, and that $\e^{-(m+C)}m^{-1}>\e^{-m(1+\delta)}$  holds for all $m\geq M_2$, where $C>0$ is a constant to be determined. Consider $M=M(\varepsilon)=\max\{M_1,M_2\}$ and define 
	\be 
	T_n:=\frac{\log{n}-M}{Z_n},
	\ee 
	We note that, since $Z_n=1+c_n/\log n$ and $c_n$ is bounded, that there exists $C>0$ such that 
	\be \label{eq:Tndif}
	\sup_{n\in\N}|T_n-(\log n-M)|<C.
	\ee
	Finally, set $f(\eps):=\e^{-(M+C)}M^{-1}-\e^{-M(1+\delta)}$ and let $\mathcal{G}:=\mathcal{G}_1(M)\cap \mathcal{G}_2(M)$. We show that the event $\cG$ implies the desired bound $|\cO_n^\square(T_n)|/n\geq f(\eps)$ for both $\square=\ominus$ and $\square=\oplus$. Let us start with $\square=\ominus$. First, by the event $\cG_1(M)$, we have $|\BP^\ominus(T_n)|>\e^{T_n}M^{-1}>n\e^{-(M+C)}M^{-1}$, where the final step uses~\eqref{eq:Tndif}. Second, by the event $\cG_2(M)$, we have 
	\be 
	|\cD^\ominus(T_n)| \leq n \e^{-M(1+\delta)}. 
	\ee 
	Combined, we obtain  
	\be \label{eq:NTNlb}
	|\cO_n^\ominus(T_n)|=|\cX_n^\ominus(T_n)|-|\cD^\ominus(T_n)|\geq n\big(\e^{-(M+C)}M^{-1}-\e^{-M(1+\delta)}\big)=nf(\eps). 
	\ee 
	For $\square=\oplus$, we similarly have that
	\be 
	|\cX_n^\oplus(T_n)|\geq M^{-1}\e^{Z_nT_n}=nM^{-1}\e^{-M},
	\ee 
	which yields the same conclusion in~\eqref{eq:NTNlb}, but for the $\oplus$ infection, so that also $ |\cO_n^\oplus(T_n)|\geq nf(\eps)$ is implied by $\cG$. Hence, we arrive at 
	\be 
	\liminf_{n\to\infty}\P{\min\Big\{\frac{|\cO_n^\ominus(T_n)|}{n},\frac{|\cO_n^\oplus(T_n)|}{n}\Big\}\geq f(\eps)}\geq \liminf_{n\to\infty}\P{\cG}\geq 1-\eps,
	\ee 
	which concludes the proof.
\end{proof}

%
%

\section*{Acknowledgments}
	BL has received funding from the European Union’s Horizon 2022 research and innovation programme under the Marie Sk\l{}odowska-Curie grant agreement no.\ $101108569$ and has been supported by the grant GrHyDy ANR-20-CE40-0002. Initial ideas for this project were conceived during the MATRIX-MFO Tandem Workshop `Stochastic Reinforcement Processes and Graphs', and the authors thank the organizers Markus Heydenreich and C\'ecile Mailler for the invitation, as well as MFO Oberwolfach for the hospitality. BL and NM thank Mia Deijfen, Markus Heydenreich, and Remco van der Hofstad for interesting discussions and for useful suggestions, and Remco van der Hofstad and Partha Dey for a careful reading of a first draft.
	
	This paper forms part of NM's PhD thesis.
	
	We also thank two anonymous referees for their suggestions that helped to improve the presentation of the paper.



\bibliographystyle{abbrv} 
\bibliography{ref.bib}

\end{document}